\documentclass[11 pt, reqno]{amsart}
\usepackage{amsmath,amssymb,mathrsfs,graphicx,bbm}
\usepackage{amsmath}
\usepackage{amsfonts}
\usepackage{latexsym}
\usepackage{amsthm}
\usepackage{amssymb,amscd}
\usepackage{xargs}
\usepackage{tikz}
\usepackage{stmaryrd}
\usepackage{pgf,tikz}
\usepackage{fancybox}
\usepackage{graphicx}
\usepackage{stmaryrd}
\usepackage{color}
\usepackage{enumitem}
\usepackage{mathpazo}
\usepackage[T1]{fontenc}
\usepackage{microtype}
\usepackage{cases}

\usepackage[colorinlistoftodos]{todonotes}
 \presetkeys{todonotes}%
{inline,backgroundcolor=gray!20,bordercolor=gray!30}{}
\tikzset{/tikz/notestyleraw/.append style={text=black}}

\usepackage{fullpage}

\newtheorem{thm}{Theorem}[section]

\newtheorem{lem}[thm]{Lemma}
\newtheorem{defn}[thm]{Definition}
\newtheorem{prop}[thm]{Proposition}

\newtheorem{cor}[thm]{Corollary}

\newtheorem{rmk}[]{Remark}

\newcommand{\be}{\begin{eqnarray}}
\newcommand{\ee}{\end{eqnarray}}
\newcommand{\ben}{\begin{eqnarray*}}
\newcommand{\een}{\end{eqnarray*}}
\newcommand{\beq}{\begin{equation}}
\newcommand{\eeq}{\end{equation}}
\newcommand{\beal}{\begin{aligned}}
\newcommand{\enal}{\end{aligned}}

\newcommand{\eps}{\varepsilon}

\newcommand{\T}{\mathbb{T}}
\newcommand{\R}{\mathbb{R}}
\newcommand{\Q}{\mathbb{Q}}

\newcommand{\N}{\mathbb{N}}

\newcommand{\Z}{\mathbb{Z}}

\newcommand{\om}{\omega}
\newcommand{\dt}{\delta}

\newcommand{\cD}{\mathcal{D}}

\newcommand{\cM}{\mathcal{M}}

\newcommand{\wh}{\widehat }
\newcommand{\wt}{\widetilde }

\usepackage[unicode=true]{hyperref}
\hypersetup{
     colorlinks,
     linkcolor={black!10!blue},
     linkbordercolor = {black!100!blue},
     citecolor={red}
}

\usepackage{enumitem}
\makeatletter
\def\namedlabel#1#2{\begingroup
    #2%
    \def\@currentlabel{#2}%
    \phantomsection\label{#1}\endgroup
}
\makeatother

\title{\textsc{Convergence Rate of Birkhoff average for  toral quasi-periodic rotations And Applications}}
\author[S. Tu]{Son N.T. Tu$^\dagger$}
\address[S. Tu]{$^\dagger$Department of Mathematics, Baylor University\\
Waco, Texas 76706, USA}
\email{son\_tu@baylor.edu}

\author[J. Zhang]{Jianlu Zhang}
\address[J. Zhang]{State Key Laborotary of Mathematical Sciences,Academy of Mathematics and systems science\\
Chinese Academy of Sciences, Beijing 100190, China}
\email{jellychung1987@gmail.com}

\author[S. Zhu]{Siyao Zhu}
\address[S. Zhu]{State Key Laborotary of Mathematical Sciences,Academy of Mathematics and systems science\\
Chinese Academy of Sciences, Beijing 100190, China}
\email{zsydtc@amss.ac.cn}

\subjclass[2020]{
37A30, 
35B27, 
35B40, 
37C40, 
37J51, 
47A35, 
49L25 
}
\keywords{Birkhoff ergodic theorem, Diophantine frequency, Besov space, Bessel potential, Hamilton--Jacobi equations, homogenization, statistical regularity, almost periodic}
\date{\today}
\thanks{$\dagger$Corresponding author}
\begin{document}

 \begin{abstract} 
In this paper, we establish quantitative Denjoy--Koksma type estimates for higher-dimensional quasi-periodic torus rotations. For Diophantine frequency vectors, we establish quantitative estimates on the discrepancy between Birkhoff averages and spatial averages for observables with various Besov-type regularities. By means of suitable Sobolev embeddings, these estimates yield, to the best of our knowledge, the sharpest currently available convergence rates for H\"older continuous observables.
As applications, we obtain substantially improved quantitative homogenization results for Hamilton--Jacobi equations in spatially quasi-periodic settings, as well as nearly optimal statistical regularity estimates for invariant measures under perturbations. 
 \end{abstract}

\maketitle

\numberwithin{equation}{section}

\section{Introduction}
Let $\T^n := \R^n/\Z^n$ with $n\geq 2$ denote the $n$-dimensional torus, endowed with the coordinate system $x=(x_1,\dots,x_n)$. Without loss of generality, we equip $\T^n$ with the Euclidean metric. Then $|x|^2 := \sum_{i=1}^n |x_i|^2 
$ for all $x\in \T^n$. Any observable $\phi:\T^n\to \R$ can be identified with a function on $\R^n$ that is $1$-periodic in each coordinate direction. We consider the quasi-periodic flow on $\T^n$. For the ordinary differential equation
\beq\label{eq:odeIntro}
\dot x= \om\in\R^n, \quad x(0)=x_0
\eeq
with the frequency $\om\in\R^n$, the orbit can be uniquely solved by $x(t)=x_0+\om t \in \T^n$ for all $t\in\R$. If $\om$ is further assumed to be non-resonant, i.e., 
\begin{equation*}
	k\cdot \omega \neq 0,\quad\text{for all}\quad k\in\Z^n\backslash\{0\},     
\end{equation*}
then each orbit of \eqref{eq:ode} is dense on $\T^n$. Consequently, $(\T^n,m,\rho_\om^t)$ establishes a 
dynamical system 
\begin{equation*}
    \rho_\om^t:\T^n\rightarrow\T^n,\quad\text{via}\; x(0)\rightarrow x(t)
\end{equation*}
of which the Lebesgue measure $m$ is the unique ergodic invariant measure. Birkhoff's ergodic theorem asserts that for any observable $\phi\in L^1(\T^n,\R)$, 
\begin{equation}\label{eq:mainest}
    \lim_{T\to \infty}	\underbrace{\frac1T\int_0^T \phi(\rho_\om^t(x)) dt}_{\text{Birkhoff average}} = \cM(\phi) := \underbrace{\int_{\T^n} \phi(x) dm(x)}_{\text{spatial average}},\quad \text{for a.e.}\; x\in\T^n. 
\end{equation}
If $\phi\in C(\T^n,\R)$, the convergence in \eqref{eq:mainest} is uniform for all $x\in\T^n$. It is natural to seek effective convergence rates for Birkhoff averages
 in the case frequencies have suitable arithmetic properties and observables have certain regularities.

In this article, we develop a Besov-space approach to error estimates  \eqref{eq:mainest} for continuous dynamical systems, obtaining rates that are optimal to the best of our knowledge, together with discrete analogues and applications to Hamilton--Jacobi homogenization and perturbative invariant measures.

\subsection{Literature}
In the discrete setting, a quantitative rate was firstly made by the so called Denjoy-Koksma's inequality.  Herman \cite[Chapter VI.3]{Herman1979French} proved the rate for functions of bounded variation:
\begin{equation}\label{eq:d-k}
    \Big\| \frac{1}{N}\sum_{j=0}^{N-1} \phi(x+j\omega) - \int_{\mathbb{T}} \phi(x)\,dx \Big\|_{L^\infty}
    \leq \frac{{\rm Var}(\phi)}{N}, \qquad \phi \in \mathrm{BV}(\mathbb{T}), \omega\in \R\backslash \Q, 
\end{equation}
where $N\in\Z_+$ satisfies $|\om-M/N|\leq 1/N^2$ for some integer $M$. Such an inequality can be interpreted as a discrete analogue of our consideration\footnote{Such a setting is quite like a sectional treatment for the system $(\T^2,m,\rho_\om^k)$ with $\om=(\om_1,1)\in\T^2$ non-resonant and $k\in\Z$. See also Corollary \ref{cor:discreteRate} for more details}. 
Moreover, due to the Gottschalk-Hedlund theorem (see \cite[Section I.8.10]{katznelson_introduction_2004}), such an $O(1/{N})$ rate is in fact optimal. A high dimensional generalization was later obtained in \cite{KleinLiuMelo2021} for H\"older continuous observables $f \in C^{0,\alpha}(\mathbb{T}^n,\R)$ and Diophantine frequencies $\omega \in \mathbb{R}^n$, of which a much worse rate was obtained. Nonetheless, a low dimensional example was contructed in \cite{KleinLiuMelo2021}, which shows that for H\"older continuous observables and almost every $\om\in\R\backslash\Q$, the rate $\mathcal{O}({\ln^{3\alpha} N}/{N^\alpha})$ is nearly optimal.
We also note that J. C. Yoccoz constructed an analytic function 
\(\phi\in C^\omega(\T^2,\mathbb{R})\) whose discrete Birkhoff averages \eqref{eq:d-k} can converge arbitrarily slowly for Liouville frequencies \(\omega\in\mathbb{R}^2\); see \cite[Appendix~1]{yoccoz_1995}.

For continuous observables, when $\omega\in\mathbb{Q}^n$, the optimal rate of \eqref{eq:mainest} is $\mathcal{O}({1}/{T})$, as follows readily from \cite[Lemma~4.2]{mitake_tran_yu_2019_rate_of_convergence_ARMA}; see also \cite{Neukamm2017,tu_2018_rate_asymptotic}. The same rate was obtained in \cite[Proposition~2.8]{hu_polynomial_2025} for $\omega\in \mathcal{D}(\sigma,C_\omega,n)$ (defined in \eqref{eq:diophan}) and $f\in H^{\sigma+\frac{n}{2}+\varepsilon}(\T^n)$ (see Subsection~\ref{subSection:FunctionSpaces} for the definition of $H^s(\T^n)$). For H\"older observables $f\in C^{0,\alpha}(\T^n)$ with $n\geq 2$, several convergence rates were established in \cite[Proposition~2.12]{hu_polynomial_2025} using the metric theory of Diophantine approximation and other arithmetic tools,  which are far from being optimal. 
Additionally, we would like to mention that the convergence rates studied here are closely related to average convergence rates for almost-periodic functions, see \cite{besicovitch_almost_1954, rynne_fractal_1998, naito_fractal_1996} for further details.  As extensions, fractal dimensions were also studied there.

\subsection{Overview of the Contributions}
In this article, we develop a new approach, formulated as a unified Besov-space framework, for studying the error estimate \eqref{eq:mainest} associated with the continuous dynamical system
$(\T^n,m,\rho_\omega^t)$ for $n\geq 2$, and also \eqref{eq:d-k} for the discrete setting. The motivation comes from several quantitative problems in Hamiltonian dynamical systems, such as homogenization for Hamilton--Jacobi equations, quantitative perturbations of effective dynamics, and related questions; see, for instance, \cite{hu_polynomial_2025, bolotin2003dynpde, galatolo_quantitative_2022, sorrentino_statistical_2026, armstrong_error_2014}.

First, under a Diophantine condition on $\omega$, we identify the threshold regularity of the observable that guarantees the optimal $O({1}/{T})$ convergence rate (supercritical case). Second, below this threshold, we obtain explicit convergence rates $O({\log T}/{T})$ (critical case) and an algebraic rate $O(1/T^\theta)$ with $\theta\in(0,1)$ for H\"older continuous observables (subcritical case). All three cases improve the existing estimates in the literature, and the exponent in the convergence rate can be expressed explicitly in terms of the regularity of the observable.
Finally, we construct examples showing that these estimates are optimal, or nearly optimal in the relevant regimes.
The method also yields analogous results in the discrete setting to obtain nearly optimal rate for \eqref{eq:d-k}; see Corollary \ref{cor:discreteRate}.

A key feature of this paper is the use of Besov spaces (with auxiliary results for Bessel potential spaces and the Wiener algebra, see Proposition \ref{prop:BirkhoffRateBesselWiener} in Appendix). We estimate the Birkhoff average using the Littlewood--Paley decomposition $f=\sum_{j=-1}^\infty \Delta_j f$
where $\Delta_j f$ is introduced in Subsection~\ref{subSection:FunctionSpaces}, rather than working directly with the Fourier representation of \(f\).
 The key ingredient is the elementary Lemma~\ref{lem:keyThm1}, which uses the Diophantine condition to control sharply the distribution of the small divisors on each dyadic shell.
In particular, the results apply to the
 H\"older class \(C^{0,\alpha}(\T^n)\), which coincides with \(B^\alpha_{\infty,\infty}(\T^n)\) for \(\alpha\in(0,1)\) (see \cite[Lemma 8.6]{muscalu_classical_2013}), and more generally
\begin{equation}\label{eq:embedHolder}
\begin{aligned}
     & B^\alpha_{\infty, 1}(\mathbb{T}^n) \hookrightarrow  C^{0,\alpha}(\mathbb{T}^n) \hookrightarrow B^{\alpha-\varepsilon}_{\infty, 1}(\mathbb{T}^n),\quad \forall\  \alpha\in(0,1], \eps\in (0,\alpha) \\ 
     & C^{k,\alpha}(\T^n) = B^s_{\infty,\infty}(\T^n), \quad s = k+\alpha, k\in \N, \alpha\in ( 0,1), 
     \qquad\qquad 
     C^{0,1}(\T^n) \subsetneq B^{1}_{\infty,\infty}(\T^n).
\end{aligned}
\end{equation}
These improved rates are then used in quantitative homogenization for quasi-periodic Hamilton--Jacobi equations and in the study of statistical stability of invariant measures; see Theorems~\ref{thm:homogenization} and~\ref{thm:st-regu}.

\subsection{Setting and preliminary}
We say that \(\omega\in\mathbb R^n\) is Diophantine of {\bf index} \(\sigma>0\) if there exists \(C_\omega>0\) such that
\begin{equation}\label{eq:diophan}
	| k\cdot \omega | \geq \frac{C_\omega}{|k|^\sigma}
\quad \text{for all } k \in \mathbb{Z}^n \setminus \{0\}.   
\end{equation}
We denote by $\mathcal{D}(\sigma, C_\omega, n)$ the set of all Diophantine frequencies satisfying \eqref{eq:diophan}. As is proved in \cite{JurgenKAM1999}, $\sigma\geq n-1$. For $\sigma>n-1$, almost every $\om\in\R^n$ belongs to $\cD(\sigma, C_\om,n)$ for a suitably small $C_\om>0$. If $\sigma=n-1$, then $\mathcal{D}(n-1, C_\om,n)$ always has measure zero, but the Hausdorff dimension of it could tend to $n$ as $C_\om\rightarrow 0_+$.

We write \(C^0(\R^n)=C(\R^n)\) for the space of continuous functions, and
\(C^k(\R^n)\) for functions whose derivatives up to order \(k\)$(\in\N)$ are continuous.
For \(\alpha\in(0,1]\), the H\"older space \(C^{0,\alpha}(\R)\) consists of
continuous functions \(f\) such that
\begin{equation*}
	[f]_{C^{0,\alpha}(\T^n)}=\sup_{x\in \T^n} \sup_{h\in \R^n\backslash \{0\}} \frac{|f(x+h)-f(x)|}{|h|^\alpha} <\infty. 
\end{equation*}
The space $C^{0,\alpha}(\T^n,\R)$ is a Banach space with norm $\|f\|_{C^{0,\alpha}(\T^n)}
:= \|f\|_{L^\infty(\T^n)}
+ [f]_{C^{0,\alpha}(\T^n)}$. In particular, $C^{0,1}(\T^n,\R)$ is the set of all Lipschitz continuous functions. 
For $k\in \N$ and $\alpha\in (0,1]$, the space $C^{k,\alpha}(\T^n)$ is the space of $C^{k}(\T^n)$ function such that its $k$-derivative $D^{\beta} f\in C^{0,\alpha}(\T^n)$ for all multi-index $|\beta|=k$. Similarly, $C^{k,\alpha}(\T^n)$ is a Banach space endowed with the norm 
\begin{align*}
	\Vert f\Vert_{C^{k,\alpha}(\T^n)} = \sum_{j=0}^k \sum_{|\beta|=j} \Vert D^\beta f \Vert_{L^\infty(\T^n)} + \sum_{|\beta|=k} [D^\beta f]_{C^{0,\alpha}(\T^n)},
\end{align*}
where 
$\beta = (\beta_1,\ldots, \beta_n) \in (\mathbb{Z}_{\geq 0})^n$ is a multi-index, with $|\beta| = \beta_1+\ldots+\beta_n$ and $\beta! = \beta_1\beta_2\ldots \beta_n$. We use the convention that the endpoint case \(k=1,\alpha=0\) is included in
the Lipschitz class, since $C^1(\mathbb T^n)\subset C^{0,1}(\mathbb T^n)$.
Thus the case \(k=1,\alpha=0\) is contained in the Lipschitz case \(k=0,\alpha=1\).

If $f\in L^1(\mathbb{T}^n)$, its Fourier transform $\widehat{f}:\mathbb{Z}^n\to\mathbb{C}$ is defined by $\widehat{f}(\kappa) = \int_{\mathbb{T}^n} f(x)e^{-2\pi i \kappa \cdot x}\;dx$ for $\kappa \in \mathbb{Z}^n$. 
    We also define the $N$-partial sum $S_Nf(x) = \sum_{|\kappa|\leq N} \widehat{f}(\kappa) e^{2\pi i \kappa\cdot x}$ for $x\in \mathbb{T}^n$. 

We briefly introduce the Besov spaces $B^s_{p,q}(\T^n)$, which will be repeatedly used to analyze the rates of convergence throughout this paper; see Section \ref{sec:Prelim} for more information.
For $f\in C^{0,\alpha}(\T^n)$ with $0<\alpha\leq 1$, one has the Paley--Littlewood decomposition (see \cite{triebel_theory_1983, muscalu_classical_2013,stein_singular_1970,adams_function_1996})
\[
f(x)=\sum_{j=-1}^\infty \Delta_j f(x), \qquad x\in\T^n, \qquad \qquad \widehat{\Delta_j f}(\kappa)=\varphi_j(\kappa)\widehat{f}(\kappa),
\qquad \kappa\in\Z^n,
\]
where $\{\varphi_j\}_{j=-1}^\infty \subset C_c^\infty(\R^n)$ is a sequence such that each $\varphi_j$ is supported in a dyadic annulus of size approximately $2^j$, and the series converges uniformly and absolutely. 
This decomposition generally applies under substantially weaker regularity assumptions, whereas direct Fourier series methods, typically require higher regularity (see \cite[Proposition 2.8]{hu_polynomial_2025}).
The Besov space $B^s_{p,q}(\T^n)$, for $s\in \R$ and $1\leq p,q\leq \infty$, is defined as the space of all distributions $f$ such that
\begin{align*}
    & \Vert f\Vert_{B^s_{p,q}(\T^n)} = \left(\sum_{j=-1}^\infty (2^{js}\Vert \Delta_j f\Vert_{L^p(\T^n)})^q\right)^{1/q}  && q < \infty, \\
    & \Vert f\Vert_{B^s_{p,\infty}(\T^n)} = \sup_{j\geq -1} 2^{js}\Vert \Delta_j f\Vert_{L^p(\T^n)}  && q = \infty . 
\end{align*}
For completeness, we also discuss in the Appendix related rates obtained through the Wiener algebra and Bessel potential spaces.

\subsection{Main results}

We first establish rates along the following embedding chain, which follows from Lemmas~\ref{lem:besovEmbeddingGeneral} and~\ref{lem:BesovHolderEmbed}: 
    $B^{s+n/p}_{p,1}(\T^n)
    \subset B^s_{\infty,q}(\T^n)
    \subset B^s_{\infty,\infty}(\T^n)$ where $s>0$, $1\leq q\leq \infty$.

\begin{thm}[Rate of Besov/H\"older Space]\label{thm:BesovHolderCounting}
    Let $n\geq 2$, $\omega\in \mathcal{D}(\sigma, C_\omega,n)$, and $x\in \T^n$. 
    \begin{itemize}
        \item[(i)] If $f\in B^s_{p,q}(\T^n)$ with $p\in(1,2]$ and $1\leq q\leq \infty$ such that, either 
        \begin{equation*}
        		q = 1 \;\text{and}\; s\geq \max\{\sigma, n/p\} 
        		\qquad\text{or}\qquad
        		q > 1 \;\text{and}\; s> \max\{\sigma, n/p\},
        \end{equation*}
        then 
\begin{equation}\label{eq:BesovMainRate}
\left|\frac{1}{T} \int_0^T  f(x+\omega t)\;dt - \int_{\T^n} f(y)\;dy \right| 
            \leq 
            \frac{C(s,\sigma,p,q)}{C_\omega}
            \Vert f\Vert_{{B^s_{p,q}}(\T^n)} \frac{1}{T}.           
\end{equation}
	Here $C(\sigma,s,p,q)$ is a finite positive constant for $1\leq q\leq \infty$.     
    
    \item[(ii)] 
     If $f\in B^s_{\infty,q}(\T^n)$ for $s>0$ and $1\leq q\leq \infty$ 
     then
        \begin{align}\label{eq:BesovB}
            &
            \left|\frac{1}{T}\int_0^T  f(x+\omega t)\;dt - \int_{\T^n} f(y)\;dy \right| 
            \leq 
            \frac{C(\sigma,s,q)}{C_\omega}\Vert f\Vert_{{B^s_{\infty,q}}(\T^n)} 
            \begin{cases}
                \begin{aligned}
                    & T^{-1}  && s > \sigma,  \\
                    & T^{-1}(\log T)^{1-1/q}  && s = \sigma,  \\
                    & T^{-s/\sigma}  && s < \sigma. 
                \end{aligned}
            \end{cases}
        \end{align}
	Here $C(\sigma,s,q)$ is a uniform positive constant for $1\leq q\leq \infty$.       
	
	\item[(iii)] As a consequence, by taking \(q=\infty\) in \eqref{eq:BesovB}, for \(f\in C^{k,\alpha}(\T^n)\), where \(k\in\N\) and \(0<\alpha\leq 1\), we obtain
\begin{align}\label{eq:corHolder}        
      \left|
            \frac{1}{T}\int_0^T f(x+\omega t)\;dt - \int_{\T^n} f(y)\;dy 
        \right| 
        \leq 
        \frac{C(\sigma,k+\alpha)}{C_\omega} \Vert f\Vert_{C^{k,\alpha}(\T^n)}  
        \begin{cases}
        T^{-1}
            & k+\alpha > \sigma, \\[1mm]
             T^{-1}\log(T)
            & k+\alpha = \sigma, \\[1mm]
        T^{-\frac{k+\alpha}{\sigma}} 
            & k+\alpha < \sigma.
               \end{cases}
\end{align}
\end{itemize}
\end{thm}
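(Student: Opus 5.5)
We will expand $f$ into Littlewood--Paley blocks and integrate in time one frequency at a time. Set $E_T f(x):=\frac1T\int_0^T f(x+\omega t)\,dt-\widehat f(0)$. For $\kappa\neq 0$,
\[
\frac1T\int_0^T e^{2\pi i\kappa\cdot(x+\omega t)}\,dt=e^{2\pi i \kappa\cdot x}\,\frac{e^{2\pi i T\kappa\cdot\omega}-1}{2\pi i T\,\kappa\cdot\omega}.
\]
The multiplier $m_T(\kappa):=\frac{e^{2\pi i T\kappa\cdot\omega}-1}{2\pi i T\kappa\cdot\omega}$ obeys $|m_T(\kappa)|\le\min\{1,(\pi T|\kappa\cdot\omega|)^{-1}\}$. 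Since $E_T$ is linear and commutes with translations, $E_Tf=\sum_{j\ge -1}E_T\Delta_jf$, with the $\kappa=0$ mode removed. The whole proof therefore reduces to bounding $\|E_T\Delta_j f\|_{L^\infty}$ block by block and then summing over $j$ with the weights $2^{js}$.

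\textbf{Step 1 (counting small divisors on a shell).} This is the content I expect Lemma~\ref{lem:keyThm1} to provide. Take $\kappa\neq\kappa'$ in the shell $A_j=\{|\kappa|\sim 2^j\}$. Then $|(\kappa-\kappa')\cdot\omega|\ge C_\omega(2^{j+2})^{-\sigma}$, so the numbers $\kappa\cdot\omega$, $\kappa\in A_j$, are $c\,C_\omega 2^{-j\sigma}$-separated on the line. Consequently every interval of length $\ell$ contains at most $1+\ell\,2^{j\sigma}/(cC_\omega)$ of them. Summing $|\kappa\cdot\omega|^{-r}$ over dyadic level sets of $|\kappa\cdot\omega|$ gives, for $r>1$,
\[
\sum_{\kappa\in A_j}|\kappa\cdot\omega|^{-r}\lesssim (C_\omega^{-1}2^{j\sigma})^{r}.
\]
In words, the smallest divisor dominates, which is exactly where the separation is used. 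For $r=1$ there is an extra factor of order $j$, and since $|\kappa\cdot\omega|\lesssim 2^j$ the count is never worse than $\#A_j\sim 2^{jn}$.

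\textbf{Step 2 (part (i)).} Write $E_T\Delta_jf(x)=\sum_{\kappa\in A_j}m_T(\kappa)\widehat{\Delta_jf}(\kappa)e^{2\pi i\kappa\cdot x}$ and bound it by $\frac1T\sum_{\kappa\in A_j}|\kappa\cdot\omega|^{-1}|\widehat{\Delta_jf}(\kappa)|$. Apply Hölder with exponents $p$ and $p'\ge2$, together with Hausdorff--Young $\|\widehat{\Delta_jf}\|_{\ell^{p'}}\le\|\Delta_jf\|_{L^p}$ (valid since $p\le2$). This gives
\[
\|E_T\Delta_jf\|_\infty\le\frac1T\Big(\sum_{A_j}|\kappa\cdot\omega|^{-p}\Big)^{1/p}\|\Delta_jf\|_{L^p}.
\]
Since $p>1$, Step 1 bounds the first factor by $C_\omega^{-1}2^{j\sigma}$. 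Interpolating Step 1 with the trivial count $2^{jn}$, I expect the sharp bound to be $C_\omega^{-1}2^{j\max\{\sigma,n/p\}}$; this is precisely the threshold $\max\{\sigma,n/p\}$ in the statement. Summing over $j$ gives $T^{-1}\sum_j 2^{j\max\{\sigma,n/p\}}\|\Delta_jf\|_{L^p}$. When $q=1$ and $s\ge\max\{\sigma,n/p\}$ this is at most $\|f\|_{B^s_{p,1}}$. When $q>1$ and $s>\max\{\sigma,n/p\}$, Hölder in $j$ bounds it by a constant times $\|f\|_{B^s_{p,q}}$.

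\textbf{Step 3 (part (ii)).} Now $p=\infty$, and two bounds are available for each block. The first is $\|E_T\Delta_jf\|_\infty\le 2\|\Delta_jf\|_\infty$, because averaging along orbits is a contraction on $L^\infty$ and $\int\Delta_jf=0$ for $j\ge0$. The second is $\|E_T\Delta_jf\|_\infty\lesssim T^{-1}C_\omega^{-1}2^{j\sigma}\|\Delta_jf\|_\infty$, which must hold with no loss in $n$. This second bound is the main obstacle: passing from $L^\infty$ to Fourier coefficients costs $2^{jn/2}$. To avoid that loss I would not sum over coefficients. Instead I would write $E_T\Delta_jf=\frac1T(g(x+\omega T)-g(x))$ with $g$ solving the cohomological equation $\omega\cdot\nabla g=\Delta_jf$ on the $j$-th shell. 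It then suffices to show that the multiplier $1/(2\pi i\kappa\cdot\omega)$, smoothly cut off to $A_j$, has $L^1$-kernel norm $\lesssim C_\omega^{-1}2^{j\sigma}$. My first attempt would be to smooth it as $\chi(\kappa/2^j)\,\psi(2^{j\sigma}\kappa\cdot\omega/C_\omega)/(\kappa\cdot\omega)$, which is a function of one direction and one radial scale, and then check the kernel bound via Step 1 together with the identity $\Delta_jf=\tilde\Delta_j\Delta_jf$. Once both bounds are in hand, split the sum over $j$ at $2^{J\sigma}\sim T$:
\[
|E_Tf|\lesssim \sum_{j\le J}\frac{2^{j(\sigma-s)}}{C_\omega T}\,2^{js}\|\Delta_jf\|_\infty+\sum_{j>J}2^{-js}\,2^{js}\|\Delta_jf\|_\infty .
\]
Hölder in $j$ then gives the three regimes. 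For $s>\sigma$ the bound is $T^{-1}$. For $s=\sigma$ the first sum has about $J\sim\log T$ terms, giving $T^{-1}(\log T)^{1-1/q}$. For $s<\sigma$ both sums are of order $T^{-1}2^{J(\sigma-s)}+2^{-Js}\sim T^{-s/\sigma}$.

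\textbf{Step 4 (part (iii)).} This follows from (ii) with $q=\infty$ using \eqref{eq:embedHolder}: $C^{k,\alpha}=B^{k+\alpha}_{\infty,\infty}$ for $\alpha<1$, and $C^{k,1}\subset B^{k+1}_{\infty,\infty}$.
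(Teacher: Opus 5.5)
\paragraph{Overall.} Your Step 1 is the paper's Lemma~\ref{lem:keyThm1}. Your Step 2 (H\"older plus Hausdorff--Young on each block) and the dyadic split at $2^{J\sigma}\sim T$ in Step 3 also match the paper.

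\paragraph{The gap in Step 3.} As written, part (ii), and hence (iii), is not proved. The block bound $\|E_T\Delta_jf\|_\infty\lesssim (C_\omega T)^{-1}2^{j\sigma}\|\Delta_jf\|_\infty$ is only asserted (``must hold''), and the cohomological-equation plan you sketch is never carried out. The obstacle you cite does not exist. A loss of $2^{jn/2}$ appears only if you put the Fourier coefficients in $\ell^1$. Pair them with the divisors in $\ell^2$ instead, i.e.\ run your own Step 2 with $p=2$:
\[
\|E_T\Delta_jf\|_\infty\le\frac{1}{\pi T}\Big(\sum_{\kappa\in\mathcal A_j}|\kappa\cdot\omega|^{-2}\Big)^{1/2}\|\widehat{\Delta_jf}\|_{\ell^2}\lesssim\frac{2^{j\sigma}}{C_\omega T}\|\Delta_jf\|_{L^2(\T^n)}\le\frac{2^{j\sigma}}{C_\omega T}\|\Delta_jf\|_{L^\infty(\T^n)}.
\]
This uses Step 1 with $r=2$, Plancherel, and the fact that $\T^n$ has measure one. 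It is exactly how the paper proves (ii), through the embedding $B^s_{\infty,q}\hookrightarrow B^s_{2,q}$.

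Your kernel route would close the same way, because $\|K\|_{L^1(\T^n)}\le\|K\|_{L^2(\T^n)}=\|\widehat K\|_{\ell^2}$. The extra cutoff $\psi$ is unnecessary: the Diophantine condition already gives $|\kappa\cdot\omega|\gtrsim C_\omega 2^{-j\sigma}$ on the shell.

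\paragraph{A misreading in Step 2.} The divisor factor is $\lesssim C_\omega^{-1}2^{j\sigma}$ for every $p>1$. No interpolation with the count $2^{jn}$ is involved. Your ``sharp bound'' $2^{j\max\{\sigma,n/p\}}$ is just weaker, so your conclusion still follows, but the threshold $n/p$ does not come from counting.

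In the paper, $s\geq n/p$ (or $s>n/p$) is used only to obtain, via Bernstein's inequality, $B^s_{p,q}\hookrightarrow B^0_{\infty,1}\hookrightarrow C(\T^n)$. This makes $f$ continuous and its Littlewood--Paley series uniformly convergent. That is what justifies the pointwise identity $E_Tf(x)=\sum_j E_T\Delta_jf(x)$ that you assert at the outset, and you should state it explicitly. For (ii), the same identity follows from $s>0$: by H\"older in $j$, $\sum_j\|\Delta_jf\|_\infty<\infty$.

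Part (iii) is as in the paper.
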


The restriction $p\in (1,2]$ in \eqref{eq:BesovMainRate} arises from the Hausdorff--Young inequality, while the assumptions $s\geq n/p$ or $s>n/p$ ensures that $f$ is continuous. Next, we show that the rates obtained in \eqref{eq:BesovB} for Besov observables are optimal when $q=\infty$.

\begin{thm}[Sharp Examples for Besov Observables] \label{thm2:examples}
Let $n\geq 2$, $\omega\in \mathcal{D}(\sigma, C_\omega,n)$, and $x\in \T^n$. 
\begin{itemize}
\item[(i)] For almost every $\om\in\R^n$,    there exists a function $f\in B_{\infty,\infty}^{s}(\T^n)$ for all $s>0$,  such that
        \begin{align}\label{eq:besovAexample}
            \left|
                \frac{1}{T_j}\int_0^{T_j} f(\omega t)\;dt
                -
                \int_{\T^n} f(y)\;dy
            \right|
            \geq C(\omega)\cdot \frac{1}{T_j}
        \end{align}
        for a sequence $T_j\to \infty$ as $j\to \infty$. The supercritical rate $\mathcal{O}(T^{-1})$ in \eqref{eq:BesovB} is therefore optimal.
\item[(ii)] The logarithmic loss $(\log T)^{1-1/q}$ in the critical rate of \eqref{eq:BesovB} is essential and optimal, in the sense that when $n=2$,  $\omega \in \mathcal{D}(1,C_\omega,2)$, $q=\infty$ and $s=\sigma=1$, there exists $f\in B^1_{\infty,\infty}(\T^2)$ and a sequence $T_j\to\infty$ as $j\to\infty$ such that 
\begin{align}\label{eq:exam-besov}
\left|\frac{1}{T_j}\int_0^{T_j} f(\omega t)\,dt - \int_{\T^2} f(x)\;dx \right|
        \geq C\cdot\frac{\log(T_j)}{T_j}.
\end{align}

\item[(iii)] The rate $T^{-s/\sigma}$ in \eqref{eq:BesovB} is essential and optimal for $q=\infty$, in the sense that, if $\omega\in \mathcal{D}(\sigma, C_\omega, n)$ such that $\omega$ has \emph{exact} Diophantine index $\sigma$, i.e., there is a sequence $k_j \in \Z^n\backslash \{0\}$ with $|k_j|\to \infty$ such that
\begin{equation*}
C_\omega|k_j|^{-\sigma}	\leq |k_j\cdot\omega| \leq C|k_j|^{-\sigma},
\end{equation*}
then there exists $f\in B^s_{\infty,\infty}(\T^n)$ with $s<\sigma$ and a sequence $T_j\to\infty$ as $j\to\infty$ such that 
\begin{align}\label{eq:exam-besovsalpha}
\left|\frac{1}{T_j}\int_0^{T_j} f(\omega t)\,dt - \int_{\T^n} f(x)\;dx \right|
        \geq C\cdot\left(\frac{1}{T_j}\right)^{s/\sigma}.
\end{align}

\item[(iv)]  For almost every  $\omega\in\R^n$, there exists $f\in B_{\infty,\infty}^{\alpha}(\T^n)$ with $\alpha\in (0,1)$ and a sequence $T_j\rightarrow \infty$ as $j\rightarrow+\infty$, such that 
\begin{equation}\label{eq:NearlyOptimal2D}
    C_1(\om)\left(\frac{1}{T_j}\right)^\alpha \leq          \left|
            \frac{1}{T_j}\int_0^{T_j} f(\omega t)\;dt - \int_{\T^n} f(x)\;dx 
        \right| 
         \leq  C_2(\om,\eps)\left(\frac{1}{T_j}\right)^{\frac{\alpha}{n-1+\varepsilon}} 
\end{equation}
        for all $\eps>0$. Consequently, the subcritical rate $\mathcal{O}\left(T^{-\frac{k+\alpha}{\sigma}}\right)$ in \eqref{eq:corHolder} is nearly optimal for $n=2$.
\end{itemize}
\end{thm}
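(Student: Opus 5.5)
All four parts will be built from lacunary (or single-mode) Fourier constructions. The basic identity is that for $f(x)=\sum_k a_k e^{2\pi i k\cdot x}$ with $a_0=0$,
\begin{equation*}
\frac1T\int_0^T f(\omega t)\,dt=\sum_{k\neq 0} a_k\,\frac{e^{2\pi i k\cdot\omega T}-1}{2\pi i\,k\cdot\omega\,T}.
\end{equation*}
A single mode therefore contributes $|a_k|\,|\sin(\pi k\cdot\omega T)|/(\pi|k\cdot\omega|T)$. This quantity is comparable to $|a_k|/(|k\cdot\omega|T)$ when $T\approx 1/(2|k\cdot\omega|)$, and comparable to $|a_k|$ in that regime. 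Throughout I will take real parts, $f=\sum a_k\cos(2\pi k\cdot x)$, so that $f$ is real. For Besov control I use the fact that a sum of modes $k_j$ with $|k_j|$ in distinct, well-separated dyadic shells satisfies $\|\Delta_j f\|_{L^\infty}\lesssim |a_{k_j}|$. Hence $f\in B^s_{\infty,\infty}$ exactly when $|a_{k_j}|\lesssim |k_j|^{-s}$.

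\emph{Parts (i) and (iv).} For (i), a single mode $f(x)=\cos(2\pi k\cdot x)$ with any fixed $k\ne0$ suffices. It is a trigonometric polynomial, so it lies in every $B^s_{\infty,\infty}$. Its Birkhoff average equals $\sin(2\pi k\cdot\omega T)/(2\pi k\cdot\omega T)$, which is $\ge c/T$ along $T_j=(j+\tfrac14)/(k\cdot\omega)$. For (iv), almost every $\omega$ lies in $\mathcal{D}(n-1+\varepsilon,C,n)$ for every $\varepsilon>0$, so the upper bound is Theorem \ref{thm:BesovHolderCounting}(iii) with $\sigma=n-1+\varepsilon$. For the lower bound I take $f=\sum_j 2^{-j\alpha}\cos(2\pi k_j\cdot x)$ with $|k_j|\sim 2^j$ chosen so that $|k_j\cdot\omega|\sim |k_j|^{-1}$. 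Such $k_j$ exist for every $\omega$: take $k_j=(q,-p,0,\dots)$ from continued-fraction-type approximations of $\omega_1/\omega_2$, or simply use $k_j=2^j e_1$ when only $|k\cdot\omega|\lesssim |k|$ is needed. At $T_j\sim |k_j|$ the $j$-th term contributes about $2^{-j\alpha}=T_j^{-\alpha}$. The other terms are $\lesssim\sum_{i\ne j}2^{-i\alpha}\min(1,1/(|k_i\cdot\omega|T_j))$, and after sparsifying the sequence they are a small fraction of the main term. The simplest concrete choice is $k_j=N_je_1$ with $N_j$ rapidly increasing, $T_j=1/(4N_j\omega_1)$. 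Modes with $i>j$ then have tiny amplitude, and modes with $i<j$ have nearly constant phase, so their contribution is $\sum_{i<j}2^{-i\alpha}(1-O(N_iT_j))$. This cancellation is the delicate point. To handle it I will replace each term by $\cos(2\pi k\cdot x)-\cos(2\pi k\cdot x_\ast)$, or use $\sin$ modes so that the low modes contribute $\approx\sum_i a_i\cdot 2\pi k_i\cdot\omega T_j/2$, which is $O(N_{j-1}T_j)$. This is negligible once $N_j\gg N_{j-1}^{1/\alpha}$.

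\emph{Parts (ii) and (iii).} For (iii), exact index gives modes $k_j$, which I sparsify, with $|k_j\cdot\omega|\sim|k_j|^{-\sigma}$. I set $f=\sum_j |k_j|^{-s}\sin(2\pi k_j\cdot x)$, which lies in $B^s_{\infty,\infty}$. At $T_j=1/(2|k_j\cdot\omega|)\sim|k_j|^{\sigma}$ the $j$-th term gives $\gtrsim |k_j|^{-s}\sim T_j^{-s/\sigma}$. Higher modes contribute at most $\sum_{i>j}|k_i|^{-s}$, which is negligible under rapid growth. Lower modes are handled by the sine-phase expansion above, giving $O(|k_{j-1}|^{\sigma-s}T_j^{-1})$, negligible when $T_j^{1-s/\sigma}\gg |k_{j-1}|^{\sigma-s}$.

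For (ii) a single mode per scale only yields $1/T$, so the $\log T$ must come from many scales acting coherently. Take $n=2$ and $\omega\in\mathcal{D}(1,C_\omega,2)$ badly approximable, so that $\omega_1/\omega_2$ has bounded partial quotients. With convergent denominators $q_m$ there are modes $k_m$, $|k_m|\sim q_m$ growing geometrically, with $|k_m\cdot\omega|\sim q_m^{-1}$ and with alternating signs. I set
\begin{equation*}
f=\sum_m \frac{\epsilon_m}{|k_m|}\,\cos(2\pi k_m\cdot x)
\end{equation*}
with signs $\epsilon_m$, and after passing to a subsequence of $m$ the modes sit in distinct shells, so $f\in B^1_{\infty,\infty}$. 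At $T=T_M\sim q_M$, each mode $m\le M$ contributes $\frac{1}{|k_m|}\cdot\frac{\sin(\pi k_m\cdot\omega T)}{\pi k_m\cdot\omega T}\approx \pm\frac{1}{|k_m|\,|k_m\cdot\omega|T}\sim \frac{1}{T}$, or rather the oscillating factor $(e^{2\pi i k_m\cdot\omega T}-1)$ does. There are about $M\sim\log T$ such terms.

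The main obstacle is to choose the $T_M$ and the signs so that these $\sim\log T$ contributions of size $1/T$ add coherently rather than cancel. The phases are $k_m\cdot\omega T_M\approx \pm T_M/q_m$, which is not small. My plan is to pick $T_M$ by a pigeonhole or Dirichlet argument on the torus of phases $(\{k_m\cdot\omega T\})_{m\le M}$, requiring a positive proportion of them to lie in $[1/8,3/8]$ modulo $1$, and then fix the signs $\epsilon_m$ compatibly. Fixing the signs is itself circular, since they must not depend on $M$, and this step is where I expect the real work. If it fails, the fallback is the sine-series identity $\sum_m \sin(2\pi k_m\cdot x)/|k_m|$, the Weierstrass-type $B^1_{\infty,\infty}$ function, evaluated with $T_M$ chosen so that $k_m\cdot\omega T_M$ is close to an integer for all $m<M$. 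The integral of $\cos$, namely $(1-\cos)/(k\cdot\omega T)$, is then nonnegative for every term, so no cancellation can occur and the sum is $\gtrsim \#\{m:\ \|k_m\cdot\omega T_M\|\ge c\}/T_M$.
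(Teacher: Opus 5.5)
Parts (i) and (iii) are fine. Your single mode in (i) is a simpler stand-in for the paper's coboundary construction, and it shows the $T^{-1}$ rate is sharp even for trigonometric polynomials. Part (iii) is the paper's argument, with one correction: for $i<j$ the phases $k_i\cdot\omega T_j$ are large, not small. Your bound $O(|k_{j-1}|^{\sigma-s}T_j^{-1})$ therefore comes from the trivial estimate $1/(\pi|k_i\cdot\omega|T_j)$ together with $|k_i\cdot\omega|\ge C_\omega|k_i|^{-\sigma}$, not from a phase expansion.

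\textbf{Part (ii): the gap is the choice of $T$.} Your fallback is exactly the paper's construction, and it removes the sign problem outright. Flip $k_m\mapsto -k_m$ where needed (or keep every other convergent, as the paper does with odd $k$) so that all $\delta_m=k_m\cdot\omega>0$. Then each term $\frac{1-\cos(2\pi\delta_m T)}{2\pi|k_m|\delta_m T}\gtrsim\frac{1-\cos(2\pi\delta_m T)}{T}$ is nonnegative, and no $M$-dependent signs arise. However, your prescription that $k_m\cdot\omega T_M$ be close to an integer for all $m<M$ (a Dirichlet-type choice) makes every factor $1-\cos(2\pi\delta_m T_M)$ nearly zero. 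Nothing in the proposal produces a time $T\lesssim q_N$ at which $\gtrsim N$ phases stay away from $\mathbb{Z}$.

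The missing idea is to average in $T$:
$\frac1R\int_0^R(1-\cos(2\pi\delta T))\,dT=1-\frac{\sin(2\pi\delta R)}{2\pi\delta R}\ge\frac12$ whenever $\pi\delta R\ge 1$.
\begin{itemize}
\item With $R_N=1/(\pi\delta_N)$ this applies to every $m\le N$, since $\delta_m\ge\delta_N$.
\item By nonnegativity of all terms, the mean of $T\mathcal{S}(T)$ over $[0,R_N]$ is $\ge cN$.
\item Hence some $T_N\le R_N\lesssim\rho^N$ has $T_N\mathcal{S}(T_N)\ge cN\gtrsim\log T_N$.
\item Also $T_N\ge cN/\|f\|_\infty\to\infty$, because $|T\mathcal{S}(T)|\le T\|f\|_\infty$.
\end{itemize}
If you sparsify to separate dyadic shells, keep the gaps bounded, or you lose the count $N\gtrsim\log T$.

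\textbf{Part (iv): your first sketch is correct; your ``simplest concrete choice'' is not.} The first sketch is a correct route and simpler than the paper's. Take sparse $k_j$ with only $|k_j\cdot\omega|\lesssim|k_j|^{-1}$. This follows from Dirichlet; a two-sided $\sim$ at every dyadic scale is unavailable for generic $\omega$ and is not needed. Use amplitudes $|k_j|^{-\alpha}$ and $T_j=1/(8|k_j\cdot\omega|)\gtrsim|k_j|$. Then:
\begin{itemize}
\item the main term is $\frac{2\sqrt2}{\pi}|k_j|^{-\alpha}\gtrsim T_j^{-\alpha}$;
\item modes $i<j$ contribute at most $C(k_1,\dots,k_{j-1})\,|k_j\cdot\omega|\lesssim|k_j|^{-1}=o(|k_j|^{-\alpha})$, because $\alpha<1$;
\item modes $i>j$ are controlled by choosing $|k_{j+1}|$ large.
\end{itemize}
No cancellation is involved. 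This avoids the paper's Borel--Bernstein step (large partial quotients $Q_{m_k+1}\ge c_0 m_k Q_{m_k}$, a non-sparse series over all convergents, evaluation at $T=Q_{m_k}$). It works for every non-resonant $\omega$. Using Dirichlet in $\mathbb{R}^n$, $|k\cdot\omega|\lesssim|k|^{-(n-1)}$, it even yields $c\,T_j^{-\alpha/(n-1)}$.

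The choice $k_j=N_je_1$ is not a special case of this. There $k_j\cdot\omega=N_j\omega_1$ is large, so $T_j=1/(4N_j\omega_1)\to 0$. The $j$-th term is then nowhere near $T_j^{-\alpha}$, and the low-mode cancellation you try to repair exists only in that irrelevant short-time regime. Large times require small divisors $|k\cdot\omega|\to 0$.
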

We remark that the results in (ii) and (iii), namely \eqref{eq:exam-besov} and \eqref{eq:exam-besovsalpha}, concern a measure-zero set of frequencies, whereas (i) and (iv) hold for almost every $\omega\in\mathbb{R}^n$. A typical example of $\omega$ for \eqref{eq:exam-besovsalpha} is $\omega = (1,a,a^2,\ldots, a^{n-1})$ where $a$ is an algebraic irrational of degree $n$.

\begin{rmk}\label{rmk:somerate} Some remarks are in order.
\begin{itemize}
\item[(i)]  The critical rate in \eqref{eq:corHolder} improves the rate $\mathcal{O}(T^{-1}\log^3 T)$ in \cite[Theorem 1]{KleinLiuMelo2021} a little bit to $\mathcal{O}(T^{-1}\log T)$ for the special case $n=2$ and $\om\in\cD(1,C_\om,2)$. 

\item[(ii)] 

The logarithmic term in \eqref{eq:corHolder} is newly discovered in this article, which presents for general $n$. Nonetheless, Theorem \ref{thm:BesovHolderCounting}-(ii) still implies for $k+\alpha=\sigma$,  $\mathcal{O}(T^{-1}\log T)$ is \textbf{\emph{nearly optimal}} as an upper bound (in view of \eqref{eq:besovAexample}). We note that any function $f\in B^1_{\infty,\infty}(\T^n)$ has uniform second-difference bound, i.e.  $|f(x+h)-2f(x)+f(x-h)|\leq C|h|$ (see \cite[Chapter~V]{stein_singular_1970}). 
Furthermore,  \(B^k_{\infty,\infty}(\T^n)\supsetneq C^{k-1,1}(\T^n)\supsetneq C^k(\T^n)\) for all $k\in\Z_+$, a natural question therefore arises:

{\bf Question:} Can we obtain the exact $\mathcal{O}(T^{-1})$ rate for observables in $C^{n-1}(\T^n)$, corresponding to the critical case $\sigma=n-1$?

\item[(iii)] At present, the observables $f(x)$ in Theorem \ref{thm2:examples}-(iv) are essentially two-dimensional (they are taken to depend only two components of $x\in\T^n$). The construction relies crucially on Diophantine approximation of irrational numbers, which are not available for $n\geq 3$. Nevertheless, to the best of our knowledge, \eqref{eq:NearlyOptimal2D} still gives the best known lower bound for observables in $B_{\infty,\infty}^\alpha(\T^n)=C^{0,\alpha}(\T^n)$ for all $\alpha\in(0,1)$ and $n\geq 2$.

\item[(iv)] As a complement, we also record some weaker estimates for related function spaces. In particular, the following embedding chain holds 
$L^{s+\frac{n}{p}+\varepsilon,p}(\mathbb{T}^n)
	\subseteq
	A^s(\mathbb{T}^n)
	\subseteq
	B^s_{\infty,1}(\mathbb{T}^n)
	\subset
	B^s_{\infty,\infty}(\mathbb{T}^n)$.
Here $L^{s,p}(\mathbb{T}^n)$ denotes the Bessel potential space, while $A^s(\mathbb{T}^n))$ denotes the Wiener algebra consisting of functions $f$ such that $\xi\mapsto (1+|\xi|)^s |\widehat f(\xi)\in \ell^1(\mathbb{Z}^n)$.
The corresponding rates for these spaces are discussed in Appendix~\ref{a0} and Proposition~\ref{prop:BirkhoffRateBesselWiener}.
\end{itemize}
\end{rmk}

The Besov-space method 
also applies to the discrete setting: 

\begin{cor}[Discrete analogue]\label{cor:discreteRate} Let $(\omega,1) \in \mathcal{D}(\sigma, C_\omega, n+1)$ and $x\in \T^n$. 
For $f\in C^{k,\alpha}(\T^n)$ where $k\in \N$ and $0<\alpha \leq 1$, 
\begin{align}\label{eq:corHolderDiscrete}        
      \left|
            \frac{1}{N}\sum_{\ell=0}^{N-1}  f(x+\ell\omega )
            - 
            \int_{\T^n} f(y)\;dy 
    \right| 
        \leq 
        \frac{C(\sigma,k+\alpha)}{C_\omega} \Vert f\Vert_{C^{k,\alpha}(\T^n)}  
        \begin{cases}
        N^{-1}
            & k+\alpha > \sigma, \\[1mm]
             N^{-1}\log(N)
            & k+\alpha = \sigma, \\[1mm]
        N^{-\frac{k+\alpha}{\sigma}} 
            & k+\alpha < \sigma.
               \end{cases}
\end{align}
\end{cor}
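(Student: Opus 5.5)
We plan to imitate the proof of Theorem~\ref{thm:BesovHolderCounting}-(ii) with $q=\infty$, working directly with the discrete sum rather than reducing to a flow. Expanding $f$ in its Fourier series (absolutely convergent on each dyadic block $\Delta_j f$, since these are trigonometric polynomials), the discrete Birkhoff sum of a single mode $e^{2\pi i\kappa\cdot x}$ is a geometric series:
\[
\frac1N\sum_{\ell=0}^{N-1} e^{2\pi i \kappa\cdot(x+\ell\omega)} = \frac{e^{2\pi i\kappa\cdot x}}{N}\,\frac{e^{2\pi i N\kappa\cdot\omega}-1}{e^{2\pi i\kappa\cdot\omega}-1},
\]
whose modulus is at most $\min\{1,\ (N\|\kappa\cdot\omega\|)^{-1}/2\}$, where $\|\cdot\|$ is the distance to the nearest integer. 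Since $\|\kappa\cdot\omega\| = \min_{m\in\Z}|(\kappa,m)\cdot(\omega,1)|$ and for the minimizing $m$ one has $|m|\le |\kappa||\omega|+1$, the hypothesis $(\omega,1)\in\mathcal D(\sigma,C_\omega,n+1)$ gives $\|\kappa\cdot\omega\|\ge C_\omega c(\omega)|\kappa|^{-\sigma}$. So the small divisors $\|\kappa\cdot\omega\|$ play exactly the role of $|\kappa\cdot\omega|$ in the continuous case, with the Diophantine index $\sigma$ unchanged.

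The core step is the discrete analogue of Lemma~\ref{lem:keyThm1}: on the shell $2^{j-1}\le|\kappa|\le 2^{j+1}$, control the distribution of $\{\|\kappa\cdot\omega\|\}$. The points $\kappa\cdot\omega$ mod $1$ for distinct $\kappa,\kappa'$ in the shell differ by $(\kappa-\kappa')\cdot\omega$ mod $1$, with $|\kappa-\kappa'|\le 2^{j+2}$. Hence they are $c C_\omega 2^{-j\sigma}$-separated on the circle, and the number of $\kappa$ with $\|\kappa\cdot\omega\|\le \delta$ is $\lesssim 1+\delta 2^{j\sigma}/C_\omega$. We then mimic the continuous proof, bounding the error from $\Delta_j f$ by
\[
\|\Delta_j f\|_{L^\infty}\cdot\min\Big\{1,\ \frac{C\,2^{j\sigma}}{C_\omega N}\Big\}
\]
(up to the harmless factors the continuous argument already handles). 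The mechanism is a dyadic summation of $\sum_\kappa \min\{1,(N\|\kappa\cdot\omega\|)^{-1}\}$ via the separation count, combined with a representation of $\Delta_j f$ that avoids summing $|\widehat f(\kappa)|$ crudely. We expect this to be the main obstacle: passing from the counting bound to an $L^\infty$ bound of $\Delta_j f$ without losing a factor $2^{jn/2}$. We would follow whatever device Lemma~\ref{lem:keyThm1} and the proof of Theorem~\ref{thm:BesovHolderCounting}-(ii) use, replacing $|\cdot|$ by $\|\cdot\|$ throughout.

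Granting the per-block bound, we use $\|\Delta_j f\|_{L^\infty}\le 2^{-js}\|f\|_{B^s_{\infty,\infty}}$ with $s=k+\alpha$, and the embedding \eqref{eq:embedHolder}, i.e.\ $C^{k,\alpha}\subset B^{k+\alpha}_{\infty,\infty}$, valid also for $\alpha=1$. The total error is then bounded by
\[
\|f\|_{B^s_{\infty,\infty}}\sum_{j}2^{-js}\min\Big\{1,\ \frac{2^{j\sigma}}{C_\omega N}\Big\}.
\]
We split at $2^{J\sigma}\approx N$ and treat the three cases:
\begin{itemize}
\item if $s>\sigma$, the series $\frac1N\sum_j 2^{j(\sigma-s)}$ converges, giving $N^{-1}$;
\item if $s=\sigma$, there are $J\approx\log N$ terms of size $1/N$, plus a tail $\sum_{j>J}2^{-js}\approx N^{-1}$, giving $N^{-1}\log N$;
\item if $s<\sigma$, both parts are $\approx 2^{-Js}=N^{-s/\sigma}$.
\end{itemize}
The $j=-1$ block is the mean together with finitely many low modes, handled directly. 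This yields \eqref{eq:corHolderDiscrete}.
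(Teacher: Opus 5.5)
\textbf{Where the proposal stands.} Your scaffolding matches the paper's proof (Lemma~\ref{lem:keyEstimateDiscrete} and Proposition~\ref{prop:DiscreteBesov}): Littlewood--Paley blocks, the geometric-sum bound $\big|\sum_{\ell=0}^{N-1}e^{2\pi i\kappa\cdot(x+\ell\omega)}\big|\le (2\|\kappa\cdot\omega\|)^{-1}$, and the $\delta_j$-separation of $\{\kappa\cdot\omega \bmod 1:\kappa\in\mathcal{A}_j\}$ on the circle with $\delta_j=c\,C_\omega 2^{-j\sigma}$. Your three-regime dyadic summation is also correct.

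The gap is the one non-routine step, the block bound
\[
\frac1N\Big|\sum_{\ell}\Delta_jf(x+\ell\omega)\Big|\lesssim \frac{2^{j\sigma}}{C_\omega N}\|\Delta_jf\|_{L^\infty}.
\]
You leave it unproved, call it the main obstacle, and defer to ``whatever device'' is used elsewhere.

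The mechanism you do sketch does not deliver it. Summing the multipliers $\min\{1,(N\|\kappa\cdot\omega\|)^{-1}\}$ over $\kappa\in\mathcal{A}_j$ is an $\ell^1$ bound on the weights. It therefore forces you to control the coefficients in $\ell^\infty$, i.e.\ by $\|\Delta_jf\|_{L^\infty}$. With only the separation count $\#\{\kappa:\|\kappa\cdot\omega\|<2^{k+1}\delta_j\}\lesssim 2^k$:
\begin{itemize}
\item each dyadic layer $2^k\delta_j\le\|x\|<2^{k+1}\delta_j$ contributes $\lesssim 2^k\min\{1,(N2^k\delta_j)^{-1}\}\le (N\delta_j)^{-1}$;
\item there are $\approx\log_2(1/\delta_j)\approx j\sigma$ layers;
\item so the best you get is $\frac{2^{j\sigma}}{C_\omega N}\cdot j$.
\end{itemize}
That extra factor $j$ degrades the critical rate to $N^{-1}(\log N)^2$ and the subcritical rate to $N^{-s/\sigma}\log N$.

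\textbf{The missing idea.} Measure the small-divisor weights in $\ell^2$ and apply Cauchy--Schwarz against Parseval. The same layer count gives
\[
\sum_{\kappa\in\mathcal{A}_j}\|\kappa\cdot\omega\|^{-2}\lesssim\sum_{k\ge0}2^k(2^k\delta_j)^{-2}\lesssim\delta_j^{-2}.
\]
This is a convergent geometric series, independent of $\#\mathcal{A}_j$. The paper phrases it as: each ring $m\delta_j\le\|x\|<(m+1)\delta_j$ contains at most two points, so the sum is at most $2\sum_m(m\delta_j)^{-2}$. Hence
\[
\Big|\sum_{\ell=0}^{N-1}\Delta_jf(x+\ell\omega)\Big|
\le\sum_{\kappa\in\mathcal{A}_j}\frac{|\widehat{\Delta_jf}(\kappa)|}{2\|\kappa\cdot\omega\|}
\le\frac12\|\widehat{\Delta_jf}\|_{\ell^2}\Big(\sum_{\kappa\in\mathcal{A}_j}\|\kappa\cdot\omega\|^{-2}\Big)^{1/2}
\lesssim\frac{2^{j\sigma}}{C_\omega}\|\Delta_jf\|_{L^2}\le\frac{2^{j\sigma}}{C_\omega}\|\Delta_jf\|_{L^\infty}.
\]
Here $\|\widehat{\Delta_jf}\|_{\ell^2}=\|\Delta_jf\|_{L^2}$ by Parseval, so there is no $2^{jn/2}$ loss and no logarithm.

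No mode-by-mode truncation $\min\{1,\cdot\}$ is needed. Use this bound for $j\le J$ and the trivial bound $\|\Delta_jf\|_{L^\infty}$ for $j>J$. Your summation then yields \eqref{eq:corHolderDiscrete}.
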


The discrete case has been the main focus of earlier works and is often studied under the framework of Denjoy--Koksma type inequalities, going back to Herman \cite[Chapter VI.3]{Herman1979French}; see also the recent work \cite{KleinLiuMelo2021}. 
Our estimate \eqref{eq:corHolderDiscrete} yields, under the present assumptions, a sharper rate than those previously available in this direction.

\subsection{Applications} 
Our first application concerns convergence rates for the homogenization of
Hamilton--Jacobi equations with quasi-periodic potentials; see
\cite{Ishii_almost_periodic_1999,tran_hamilton-jacobi_2021} and also
\cite{lions_homogenization_2005}. Periodic homogenization of
Hamilton--Jacobi equations was initiated in \cite{LPV}. The perturbed test
function method \cite{evans_perturbed_1989,evans_homogenization_1992},
combined with discount approximation, yielded the first rate
\(\mathcal{O}(\varepsilon^{1/3})\) in \cite{capuzzo-dolcetta_rate_2001}. A conditional and the optimal rate \(\mathcal{O}(\varepsilon)\) were established in
\cite{mitake_tran_yu_2019_rate_of_convergence_ARMA} and
\cite{tran_optimal_2025}, respectively.
Quantitative periodic homogenization has advanced significantly in recent
years; see, among many others,
\cite{han_rate_2023,han_quantitative_2025,hanTu_quantitative_2025,mitake_system_rate_2025,mitake_quantitative_2026,mitake_quantitative_2025,ding_quantitative_2026}
and the references therein. Earlier nearly optimal rates include
\cite{jing_effective_2020,tu_2018_rate_asymptotic}.

By contrast, the quasi-periodic case remains far less understood. Qualitative
results were established in
\cite{Ishii_almost_periodic_1999,lions_homogenization_2005}; see also
\cite{shen_convergence_2015} for a rate in the elliptic setting. For
Hamiltonians with finite-range spatial dependence, a rate was obtained in
\cite{armstrong_error_2014}. In this paper, a modulus-based rate for
the almost-periodic Hamiltonians was derived by following the technics in \cite{capuzzo-dolcetta_rate_2001}. See also \cite{souganidis_stochastic_1999,rezakhanlou_homogenization_2000,caffarelli_rates_2010} for related qualitative and quantitative results for homogenization of stochastic settings.\medskip

In our former work \cite{hu_polynomial_2025}, an algebraic convergence rate was conditionally obtained, we now improve the rate to a nearly optimal one. 
Suppose $\omega\in \mathcal{D}(\sigma,C_\omega,n)$, $u_0\in W^{1,1}(\mathbb{R})$ and
\begin{equation}\label{eq:asum}
 \qquad V(x) = f(\omega x)\quad \text{ with $f\in C(\T^n;\R)$ and }\; x\in \R. 
\end{equation}
For each $\varepsilon>0$, let $u^\varepsilon\in C\big(\mathbb{R}\times [0,\infty)\big)$ be the viscosity solution to:
\begin{equation}\label{eq:intro:Ceps}
\begin{cases}
\begin{aligned}
    u_t^\varepsilon 
    + 
    \tfrac{1}{2}|Du^\varepsilon| - V\left(\tfrac{x}{\varepsilon}\right)
    &= 0 & &\quad\text{in}\;\mathbb{R} \times (0,\infty),\\
    u^\varepsilon(x,0) &= u_0(x) & &\quad\text{on}\;\mathbb{R}.
\end{aligned}
\end{cases} 
\end{equation}
By \cite{Ishii_almost_periodic_1999, tran_hamilton-jacobi_2021}, $u^\varepsilon$ converges to some function $u$ locally uniformly
on $\mathbb{R} \times [0, \infty)$ as $\varepsilon \to 0$ and
\begin{equation}\label{eq:intro:C}
\begin{cases}
\begin{aligned}
    u_t+ \overline{H}(Du) &= 0 & &\quad\text{in}\;\mathbb{R}\times(0,\infty),\\
    u(x,0) &= u_0(x) & &\quad\text{on}\;\mathbb{R}
\end{aligned}
\end{cases} 
\end{equation}
where the \emph{effective Hamiltonian} $\overline{H}: p\in\R\rightarrow\R$  is the unique constant such that 
\begin{equation}\label{eq:CPdelta}
    \overline{H}(p) - \delta 
    \leq 
    \frac{1}{2}|p+Dw_\delta|^2 - V(x)
    \leq \overline{H}(p)+\delta \qquad\text{in}\;\mathbb{R}
\end{equation}
can be solved by a viscosity solution $w_\delta$ (\emph{approximated corrector}) for any $\delta>0$. If $f\in C^2(\T^n)$, we say $f$ has a non-degenerate minimum  if $f$ attains its minimum at ${\bf y}\in\T^n$ with $D^2f({\bf y})$ positive definite. 
\begin{thm}[Homogenization Rate]\label{thm:homogenization} Let \(n\geq 2\), $\eps\in(0,1)$, \(\omega\in \mathcal{D}(\sigma,C_\omega,n)\) and assume \eqref{eq:asum}. There exists a uniform constant $C:=C(\sigma, C_\om, u_0,T)>0$ such that for all $t\in[0,T]$, 
\begin{align}\label{eq:generalQuasi}
    u^\varepsilon(x,t) - u(x,t)\geq -C
    \begin{cases}
    \begin{aligned}
        &\varepsilon^{\frac{1}{2\sigma}}   && f\in W^{1,1}(\T^n)\\ 
        &\varepsilon^{\frac{1}{\sigma}}|\log(\varepsilon)|    &&  f\in C^2(\T^n)\;\text{has only non-degenerate minima}. 
    \end{aligned}
    \end{cases}
\end{align}
If, in addition, $\overline{H}\in C^{1,\beta}(\R)$, then 
\begin{align}\label{eq:generalQuasiUpper}
    u^\varepsilon(x,t) - u(x,t)\leq C
    \begin{cases}
    \begin{aligned}
        &\varepsilon^{\frac{1}{2\sigma}\cdot \frac{\beta}{1+\beta}}   &&
        f\in W^{1,1}(\T^n) \\ 
        &\varepsilon^{\frac{1}{\sigma}\cdot \frac{\beta}{1+\beta}}|\log(\varepsilon)|    &&  f\in C^2(\T^n)\;\text{has only non-degenerate minima}. 
    \end{aligned}
    \end{cases}
\end{align}
Consequently, for almost every \(\omega\in \mathbb{R}^n\), the rates in \eqref{eq:generalQuasi} and \eqref{eq:generalQuasiUpper} hold with an arbitrary loss $\delta>0$, corresponding to the choice $\sigma \approx n-1+\delta$. 
\end{thm}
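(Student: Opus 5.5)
The plan is to exploit the one-dimensional structure. In one space dimension the effective Hamiltonian and approximate correctors for $H(p,x)=\tfrac12|p|^2-V(x)$ are explicit, so every homogenization error reduces to a Birkhoff-average discrepancy along the flow $t\mapsto \omega t$ on $\T^n$. Theorem~\ref{thm:BesovHolderCounting} then supplies the rate. (I read the Hamiltonian in \eqref{eq:intro:Ceps} as $\tfrac12|Du^\eps|^2-V$, consistently with \eqref{eq:CPdelta}.)

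\textbf{Step 1: explicit $\overline H$.} Set $m_0=\min f$ and, for $\lambda\ge -m_0$, put $g_\lambda(y)=\sqrt{2(\lambda+f(y))}$. By unique ergodicity, $\overline H(p)=-m_0$ for $|p|\le \cM(g_{-m_0})$. For larger $|p|$, $\overline H(p)=\lambda$ is the unique solution of $\cM(g_\lambda)=|p|$. For such $\lambda$ I will take the approximate corrector
\begin{equation*}
w(x)=\int_0^x\big(\pm g_\lambda(f(\omega y))-p\big)\,dy,
\end{equation*}
which solves $\tfrac12|p+w'|^2-V=\lambda$ exactly. For flat $p$ one glues the two branches at a maximum of $V$, in the standard way. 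The key quantity is the growth of $w$, namely
\begin{equation*}
|w(x)|=|x|\,\Big|\tfrac1x\int_0^x g_\lambda(\omega y)\,dy-\cM(g_\lambda)\Big|.
\end{equation*}

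\textbf{Step 2: regularity of $g_\lambda$, uniformly in $\lambda$.} The worst case is the critical level $\lambda=-m_0$. For a general $f$ of the class in the first case of \eqref{eq:generalQuasi}, the square root makes $g_\lambda$ at best H\"older-$\tfrac12$ in the $B^{1/2}_{\infty,\infty}$ sense, with a norm controlled uniformly in $\lambda$. In that case Theorem~\ref{thm:BesovHolderCounting}(ii) with $s=\tfrac12<\sigma$ gives the discrepancy $T^{-1/(2\sigma)}$. If instead $f\in C^2$ has only non-degenerate minima, then $f-m_0$ is comparable to the squared distance to the finite minimum set. Hence $g_\lambda$ is uniformly Lipschitz in $\lambda$, and \eqref{eq:corHolder} with $k+\alpha=1\le\sigma$ gives $T^{-1/\sigma}$, or $T^{-1}\log T$ when $\sigma=1$; this is the source of the $|\log\eps|$. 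I expect this step to be the main obstacle. It requires a careful uniform-in-$\lambda$ H\"older/Lipschitz estimate for $\sqrt{2(\lambda+f)}$ near $\lambda=-m_0$. I would prove it by the elementary inequality $|\sqrt a-\sqrt b|\le\sqrt{|a-b|}$ in the first case, and by a Morse-lemma localization near the minima in the second. Only the weak $W^{1,1}$ assumption needs extra care, probably via the Besov embedding.

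\textbf{Step 3: comparison.} Using $w$ at scale $\eps$, the function $v=u+\eps w(x/\eps)$, or more precisely its sup-/inf-convolution version in the doubling-variables argument, is an approximate subsolution. Its defect is controlled by $\eps\,|w(R/\eps)|\lesssim R\,\rho(R/\eps)$, where $\rho$ is the rate from Step 2. Since the domain of dependence up to time $T$ is bounded, $R\sim C(T,u_0)$, and this yields the one-sided bound \eqref{eq:generalQuasi} with error $\rho(1/\eps)$, which is $\eps^{1/(2\sigma)}$ or $\eps^{1/\sigma}|\log\eps|$. Only one inequality is available without further regularity, because the gluing of branches for flat $p$ produces only a subsolution-type corrector. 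For the reverse inequality \eqref{eq:generalQuasiUpper} I would use the Hopf--Lax/optimal-control representation. The control error $\eta$ from freezing the slope costs $\eta^{1+\beta}$ via $\overline H\in C^{1,\beta}$ (through $\overline L$), against $\rho/\eta$ from the corrector. Optimizing $\eta=\rho^{1/(1+\beta)}$ gives $\rho^{\beta/(1+\beta)}$. Finally, for a.e.\ $\omega$ one has $\omega\in\cD(n-1+\delta,C_\omega,n)$, which gives the last claim.
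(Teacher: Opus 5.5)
Your Steps 1--2 agree with the paper. The explicit corrector $v_p(x)=\int_0^x\big(\pm f_{\overline H(p)}(\omega y)-p\big)\,dy$, with $f_\mu=\sqrt{2(\mu+f)}$, reduces everything to Birkhoff discrepancies of $f_\mu$. Your direct uniform Lipschitz bound on $f_\mu$ in the non-degenerate case ($|\nabla f|\lesssim|x-x_0|$, $f-\min f\gtrsim|x-x_0|^2$) is a clean substitute for Lemma~\ref{lem:keyBgamma}. In the first case no Besov embedding will rescue $W^{1,1}(\T^n)$, $n\ge2$, since it carries no H\"older modulus; like the paper, you really need $f$ Lipschitz to get $f_\mu\in C^{0,1/2}$ uniformly in $\mu$.

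\textbf{The gap is Step 3, the lower bound.} For a fixed $p$, the function $u+\eps w_p(x/\eps)$ is not an approximate subsolution. Its defect is $\overline H(p)-\overline H(Du)+(Du-p)\big(p+w_p'(x/\eps)\big)+\tfrac12|Du-p|^2$, which is of order $|Du-p|$. So $p$ must be tied to the doubling variables, e.g.\ $p=(x-y)/\alpha$. The argument then needs quantitative regularity of $p\mapsto w_p$, plus the penalization errors in $\alpha$. But $\partial_p w_p$ is a Birkhoff integral of $|\overline H'(p)|/f_{\overline H(p)}-1$, whose integrand blows up at the minima of $f$ as $p\to\pm p_0$. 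Even for periodic problems with bounded correctors, this Capuzzo-Dolcetta--Ishii doubling scheme only yields $O(\eps^{1/3})$. So your asserted defect bound $\eps|w(R/\eps)|\lesssim R\rho(R/\eps)$ is unsupported, and the route as described does not produce $\eps^{1/(2\sigma)}$ or $\eps^{1/\sigma}|\log\eps|$.

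\textbf{The missing idea} is to avoid comparison altogether and calibrate along the paths of the control formula; this is what \eqref{eq:Action} encodes. Take any admissible $\eta$ with $\eta(0)=0$ and any $p$ with $|p|\ge p_0$. Fenchel--Young with $q=p+v_p'(\eta)$ gives $L(\eta,\dot\eta)\ge\dot\eta\,(p+v_p'(\eta))-\overline H(p)$. Integrating, $\eps\int_0^{t/\eps}L(\eta,\dot\eta)\,ds\ge pX-\overline H(p)t+\eps v_p(\eta(t/\eps))$, where $X=\eps\eta(t/\eps)$. Now choose $p$ attaining the supremum defining $\overline L$ at the averaged velocity of the path. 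This turns $pX-\overline H(p)t$ into the Hopf--Lax term, and such $p$ can always be taken with $|p|\ge p_0$: it lies off the flat part if $X\neq0$, and one takes $p=\pm p_0$ if $X=0$. Hence only the exact non-flat correctors enter, with no $p$-regularity, no gluing and no penalization. The error is exactly $\eps|v_p(\eta(t/\eps))|\le Ct^{1-1/(2\sigma)}\eps^{1/(2\sigma)}$. This uses $|\eta(t/\eps)|\le Ct/\eps$ for bounded energies and the uniformity of the Birkhoff bound in the starting point and in $\mu$.

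This also shows that flat-part gluing is not why only one side is easy. The upper bound is harder because one must control where minimizers end up. Comparing the calibrations at $p$ and $p\pm h$, using $\overline H\in C^{1,\beta}$, gives $h\,|\eta(s)/s-\overline H'(p)|\le Ch^{1+\beta}+C\rho(s)$. Hence the velocity error is $\lesssim h^{\beta}+\rho/h$. Your sketch has this shape, but the competing terms are $h^{\beta}$ and $\rho/h$, not $h^{1+\beta}$ and $\rho/h$; that is what makes $h=\rho^{1/(1+\beta)}$ optimal.
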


\begin{rmk}\label{rmk:prototypeCPDE} 
The following prototype provides a generic example of quasi-periodic functions with a unique non-degenerate minimum:
    \begin{align}\label{eq:f}
        f({\bf y}) = \big( n - \sin(2\pi y_1) - \ldots - \sin(2\pi y_n)\big)^\gamma, \qquad {\bf y}=(y_1,\ldots, y_n)\in \T^n, \gamma>0. 
    \end{align} 
 	This prototype was firstly proposed in \cite{lions_correctors_2003} to illustrating the difficulties of finding a sublinear corrector for \eqref{eq:CPdelta}.
As is known, $f({\bf y})$ is Lipschitz only when \(\gamma\geq 1\). 
    For $n=2$, $\omega\in \mathcal{D}(1,C_\omega,2)$, if we take previous prototype $f(\cdot)$ into \eqref{eq:asum}, then we get 
    \begin{align*}
        u^\varepsilon(x,1) - u(x,1) \geq -C
        \begin{cases}
            \varepsilon &\quad \gamma>1,\\
            \varepsilon |\log(\varepsilon)| &\quad \gamma=1,\\
            \varepsilon^\gamma &\quad \gamma<1. 
        \end{cases}
    \end{align*}
    This result improves \cite[Theorem 1.1]{hu_polynomial_2025} to optimal, and reduces the critical value of $\gamma$ from $2$ to $1$.  
    For the prototype $f(x)$ defined in \eqref{eq:f}, the associated $\overline H$ is totally computable, and only when $\gamma> 2$ we have $\overline H\in C^{1,\beta}(\R)$ with $\beta=1/2-1/\gamma$. 
\end{rmk}

The other application concerns the statistical regularity of invariant measures with respect the perturbations. These problems were originally studied for hyperbolic systems \cite{baladi2014linear,galatolo_quadratic_2020}, and recently attracted consideration for Hamiltonian systems under the context of Aubry-Mather theory, see \cite{bolotin2003dynpde, sorrentino_statistical_2026} and the references therein for related developments and earlier works.

\begin{thm}[Statistical Regularity]\label{thm:st-regu}
	Let $V:\T^n\times [-1,1]\to \R^n$ be a Lipschitz vector field such that $V(x,0)\equiv \omega \in \mathcal{D}(\sigma, C_\omega, n)$ for $x\in \T^n$. 
	Assume
	\begin{equation}\label{ODEfor t}
		\dot{x} = V(x,\dt), \quad x\in\mathbb{T}^n
	\end{equation}
	is a parametrized ODE.
	There exists a uniform constant $C(\sigma, C_\om, \|V\|_{W^{1,\infty}})>0$, such that 	
	the $1$-Wasserstein distance satisfies
\begin{align}\label{eq:stat-regu-upper}
	\mathcal{W}_1(\mu_\delta,\mu_0)
	:=
	\sup_{\substack{f\in {\rm Lip}(\T^n,\R), \\ \| \nabla f	\|_{L^\infty}\leq 1}}
	\left|
	\int_{\mathbb{T}^n} f\,d\mu_\delta
	-
	\int_{\mathbb{T}^n} f\,d\mu_0
	\right|
	\leq
	C
	\begin{cases}
		\delta^{\frac{1}{1+\sigma}}, & \qquad \sigma>1,\\
		\delta^{\frac{1}{2}}|\log(\delta		)|, & \qquad \sigma=1,
	\end{cases}
\end{align}
	for any probability measure $\mu_\dt\in\mathbb P(\T^n,\R)$ 
which is {\bf invariant} with respect to the flow of \eqref{ODEfor t}.

The estimate \eqref{eq:stat-regu-upper} is also nearly optimal for $n=2$. Precisely,  there exists a sequence of vector fields $V(x,\dt_j)$ and a sequence of invariant measures $\mu_{\dt_j}$ associated with it ($\dt_j\rightarrow 0$ as $j\rightarrow+\infty$), such that
\begin{equation}\label{eq:stat-regu-lower}
	\mathcal{W}_1(\mu_{\delta_j},\mu_0)
	\geq
	C'
	\begin{cases}
		\delta_j^{\frac{1}{1+r}}, & \qquad \sigma_*(\om)>r>1,\\
		\delta_j^{\frac{1}{2}},   & \qquad \sigma_*(\om)=1,
	\end{cases}
\end{equation}
	where $C':=C'(\sigma, C_\om) >0$ is a uniform constant, and
	\begin{equation}\label{eq:defn-diophantine-type}
			\sigma_*(\om):=\inf\{\sigma>0\ |\ \om\in\cD(\sigma, C_\sigma,2)\;  \text{ for some }C_\sigma>0\}. 
	\end{equation}
\end{thm}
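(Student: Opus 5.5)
The upper bound \eqref{eq:stat-regu-upper} is obtained by comparing the $\mu_\delta$- and $\mu_0$-integrals of a Lipschitz $f$ through Birkhoff averages along the two flows. Since $\mu_0$ is Lebesgue measure $m$ and $\mu_\delta$ is invariant for the flow $\Phi^t_\delta$ of \eqref{ODEfor t}, we have, for every $T>0$,
\[
\int f\,d\mu_\delta=\int \frac1T\int_0^T f(\Phi^t_\delta(x))\,dt\,d\mu_\delta(x).
\]
We then compare $\Phi^t_\delta(x)$ with the linear orbit $x+\omega t$. The field $V$ is Lipschitz in $(x,\delta)$ and $V(\cdot,0)=\omega$, so $|V(y,\delta)-\omega|\le \|V\|_{W^{1,\infty}}\delta$ uniformly in $y$. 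Integrating gives $|\Phi^t_\delta(x)-(x+\omega t)|\le C\delta t$, with no Gronwall exponential, because the unperturbed field is constant. Consequently
\[
\Bigl|\frac1T\int_0^T f(\Phi^t_\delta x)\,dt-\frac1T\int_0^T f(x+\omega t)\,dt\Bigr|\le C\delta T\,\|\nabla f\|_{L^\infty}.
\]

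The linear Birkhoff average is handled by Theorem~\ref{thm:BesovHolderCounting}(iii) with $k=0$, $\alpha=1$ (Lipschitz). Since $\sigma\ge n-1\ge 1$, this yields the error $C\,T^{-1/\sigma}$ when $\sigma>1$ and $C\,T^{-1}\log T$ when $\sigma=1$, uniformly in $x$. Integrating the pointwise bound against $\mu_\delta$ gives
\[
\Bigl|\int f\,d\mu_\delta-\int f\,dm\Bigr|\le C\bigl(\delta T+T^{-1/\sigma}\bigr)\quad\text{or}\quad C\bigl(\delta T+T^{-1}\log T\bigr).
\]
For $\sigma>1$ we optimize with $T=\delta^{-\sigma/(1+\sigma)}$, which gives $\delta^{1/(1+\sigma)}$. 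For $\sigma=1$ we take $T=\delta^{-1/2}$, which gives $\delta^{1/2}|\log\delta|$. The constant $\|f\|_{C^{0,1}}$ in Theorem~\ref{thm:BesovHolderCounting} involves $\|f\|_{L^\infty}$, so we first replace $f$ by $f-f(0)$; both integrals are against probability measures, so this does not change the difference, and the new $f$ has sup norm at most $C\|\nabla f\|_{L^\infty}$ on $\T^n$. Taking the supremum over such $f$ gives \eqref{eq:stat-regu-upper}.

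The lower bound \eqref{eq:stat-regu-lower} for $n=2$ needs a construction. By the definition of $\sigma_*(\omega)$, for $r<\sigma_*(\omega)$ there are infinitely many $k_j\in\Z^2$ with $|k_j\cdot\omega|\le |k_j|^{-r}$. We choose a rational direction $\omega_j\perp k_j$ (after rescaling) with $|\omega_j-\omega|\approx |k_j\cdot\omega|/|k_j|\lesssim |k_j|^{-1-r}$, and set $\delta_j:=|\omega-\omega_j|$. We then take the constant perturbed field $V(x,\delta_j)=\omega_j$, extended Lipschitz in $\delta$, with $\|V\|_{W^{1,\infty}}$ kept bounded. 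The flow of $\omega_j$ has closed orbits of length comparable to $|k_j|$. We let $\mu_{\delta_j}$ be the uniform measure on one closed orbit. This orbit is a closed geodesic whose translates are spaced about $|k_j|^{-1}$ apart, so the test function $f(x)=\operatorname{dist}(x,\text{orbit})$, which is $1$-Lipschitz, has $\int f\,d\mu_{\delta_j}=0$ and $\int f\,dm\gtrsim |k_j|^{-1}$. Since $\delta_j\lesssim |k_j|^{-1-r}$, this gives $\mathcal W_1\gtrsim |k_j|^{-1}\gtrsim\delta_j^{1/(1+r)}$. When $\sigma_*=1$, we use the Dirichlet approximation ($r=1$) in the same way to get $\delta_j^{1/2}$.

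The main obstacle is this construction. We must check that the spacing of the orbit's translates is genuinely of order $|k_j|^{-1}$, which requires $k_j$ to be primitive so that the orbit length is about $|k_j|$. We must also make sure the uniform constant $C'$ can be chosen independently of $j$, which is harmless since the perturbed fields have bounded norms.
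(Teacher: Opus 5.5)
Your proposal is correct and follows essentially the same route as the paper. For the upper bound, you use invariance of $\mu_\delta$, the Gronwall-free comparison $|\Phi^t_\delta(x)-(x+\omega t)|\le C\delta t$, Theorem~\ref{thm:BesovHolderCounting}(iii) for Lipschitz observables and optimization in $T$; for the lower bound, a constant rational field near $\omega$, the uniform measure on one closed orbit, and the test function $\operatorname{dist}(\cdot,\text{orbit})$ with $\int \operatorname{dist}\,dm\gtrsim |k_j|^{-1}$. The differences are only cosmetic: you parametrize by $k_j\in\Z^2$ and the orthogonal projection of $\omega$ onto $k_j^\perp$ rather than by convergents $P_j/Q_j$ with $\omega=(1,\vartheta)$, and you spell out the normalization $f\mapsto f-f(0)$ and the Lipschitz extension in $\delta$, which the paper leaves implicit.
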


\begin{rmk} \begin{itemize}
\item[(i)] In view of \eqref{eq:defn-diophantine-type}, if
\(\omega\in\mathcal D(\sigma,C_\omega,2)\), then $1\leq \sigma_*(\omega)\leq \sigma$. Furthermore, whenever \(\sigma_*(\omega)<\infty\), for every
\(\varepsilon>0\) there exists \(C_\varepsilon>0\) such that $\omega\in\mathcal D(\sigma_*(\omega)+\varepsilon,C_\varepsilon,2)$.
Thus \eqref{eq:stat-regu-upper} is nearly optimal by means of \eqref{eq:stat-regu-lower} and above properties of $\sigma_*(\om)$.
\item[(ii)] 
Following the scheme of \cite{sorrentino_statistical_2026}, the ODE in
\eqref{ODEfor t} can be replaced by a Hamiltonian ODE. In this setting, one can
obtain statistical regularity of \textbf{Mather measures} with respect to
perturbation parameters, especially cohomology parameters and scalar
perturbations of the potential; see \cite[Sections 4 and 5]{sorrentino_statistical_2026}.
\end{itemize}
\end{rmk}

\subsection*{Organization of the Paper}
In Section~\ref{sec:Prelim}, we recall various notions of function spaces, especially Besov spaces, together with several embedding and inclusion results among them. Section~\ref{Sec:Rate} is devoted to the proofs of the main results of this paper. In Section \ref{a1}, we prove the discrete analog of the rates, including Corollary \ref{cor:discreteRate}. 
In Section~\ref{Sec:AppHomogenization}, we present applications of these results to improved convergence rates in the homogenization of Hamilton--Jacobi equations with quasi-periodic potentials. In Section \ref{Sec:AppStabilityMeasures}, we present applications to the stability of Mather measures. 
We included results for Bessel potentials and Wiener algebras in Appendix.

\section{Preliminaries} \label{sec:Prelim}

\subsection{Diophantine Approximations}\label{subsection:Diophantine}

For an irrational $\vartheta$, its continuous fraction is denoted by $\vartheta = [a_0;a_1,a_2,\ldots]$ where $a_0\in \Z$ and $a_i\in \N$ for $i=1,2,\ldots$, such that 
\begin{align*}
	\vartheta = a_0 + \frac{1}{a_1 + \frac{1}{a_2+\ldots}}. 
\end{align*}
We summarize several facts from the classical theory; see, for example, \cite{cassels_1957,schmidt_diophantine_1980}.
We define the sequence $\{(P_n/Q_n)\}_{n=1}^\infty$ in the following way:
    \begin{equation}\label{eq:DiophantinePQ}
    \begin{aligned}
        P        _{-1} &= 1, &P_0 &= a_0, &\qquad P_{k} &= a_k P_{k-1} + P_{k-2},\\
        Q_{-1} &= 0, &Q_0 &= 1, &\qquad Q_{k} &= a_k Q_{k-1} + Q_{k-2}.
    \end{aligned}    
    \end{equation}
    Then $P_k/Q_k$'s are called Diophantine approximations of $\vartheta$, and 
    \begin{equation}\label{eq:Diophantineestimates}
    \frac{1}{Q_j(Q_j + Q_{j+1})} <    \left|\vartheta - \frac{P_k}{Q_k}\right| < \frac{1}{Q_kQ_{k+1}} < \frac{1}{Q_k^2} \qquad\text{for}\;k=1,2,\ldots. 
    \end{equation}
    In particular, if $\vartheta$ is badly approximable, we have 
    \begin{equation}
        \left|\vartheta - \frac{p}{q}\right| 
        > \frac{C_\vartheta}{q^2} \qquad \text{for all integers}\; p,q\neq 0.
    \end{equation}
    An equivalent criterion is that, $\vartheta$ is badly approximable if and only if $1\leq a_k \leq M_\vartheta$ for all $k=1,2,\ldots$. 
    In other words, if $\vartheta$ is badly approximable, the sequence $(P_k,Q_k)$ satisfies
    \begin{align*}
        \frac{C_\vartheta}{Q_k^2}	< \left|\vartheta - \frac{P_k}{Q_k}\right| < \frac{1}{Q_k^2}, \qquad k=1,2,\ldots. 
    \end{align*}
At last, we want to point out that any frequency $\om:=(1,\vartheta)\in\R^2$ with $\vartheta$ badly approximable has to be contained in $\cD(1,C_\vartheta, 2)$.

We recall a continued-fraction analogue of the Borel–Cantelli lemma.

\begin{thm}[Borel-Bernstein Theorem, {\cite[Theorem 1.1.1]{Bugeaud-2004ApproximationAlgebraicNumbers}}] \label{thm:borelbernstein}
For each irrational number \(\vartheta \in (0,1)\), let $\vartheta=[0;a_1(\vartheta),a_2(\vartheta),\ldots]$
denote its continued fraction expansion, and let
\(\varphi:\mathbb{N}\to(0,\infty)\) be a given function.
\begin{itemize}
\item[(i)] If $\sum_{i=1}^\infty \frac{1}{\varphi(n)} = \infty$ then for almost every $\vartheta\in [0,1]$, one has $a_i(\vartheta) \geq \varphi(i)$ for infinitely many $i\in \N$.
\item[(ii)] If $\sum_{i=1}^\infty \frac{1}{\varphi(n)} < \infty$ then for almost every $\vartheta\in [0,1]$, the inequality $a_i(\vartheta) \geq \varphi(i)$ holds only for finitely many $i\in \N$.
\end{itemize}
\end{thm}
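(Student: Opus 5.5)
The plan is to reduce both parts to a uniform estimate on the conditional distribution of a single partial quotient given the earlier ones, and then run the two halves of a Borel--Cantelli argument. For \(a_1,\dots,a_{i-1}\in\N\), consider the cylinder \(I(a_1,\dots,a_{i-1})=\{\vartheta\in(0,1):a_j(\vartheta)=a_j,\ j<i\}\). With \(Q_{i-1},Q_{i-2}\) as in \eqref{eq:DiophantinePQ}, this is the interval with endpoints \(P_{i-1}/Q_{i-1}\) and \((P_{i-1}+P_{i-2})/(Q_{i-1}+Q_{i-2})\). Its length is therefore
\[
|I(a_1,\dots,a_{i-1})| = \frac{1}{Q_{i-1}(Q_{i-1}+Q_{i-2})}.
\]
The subset on which \(a_i\geq m\) is again an interval, with endpoints \(P_{i-1}/Q_{i-1}\) and \((mP_{i-1}+P_{i-2})/(mQ_{i-1}+Q_{i-2})\), and its length is \(1/\big(Q_{i-1}(mQ_{i-1}+Q_{i-2})\big)\). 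Using \(0\le Q_{i-2}\le Q_{i-1}\), the ratio of the two lengths lies between \(1/(2m)\) and \(2/m\). Hence, uniformly in the prescribed earlier digits,
\begin{equation*}
\frac{1}{2m}\;\le\;\frac{|\{a_i\ge m\}\cap I(a_1,\dots,a_{i-1})|}{|I(a_1,\dots,a_{i-1})|}\;\le\;\frac{2}{m},\qquad m\in\N .
\end{equation*}
Applying this with \(m=\lceil\varphi(i)\rceil\) gives a conditional probability comparable to \(1/\varphi(i)\), up to harmless adjustments when \(\varphi(i)<1\).

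For (ii), sum the upper bound over all cylinders of rank \(i-1\). This gives \(|\{a_i\geq\varphi(i)\}|\le 2/\lceil\varphi(i)\rceil\le 2/\varphi(i)\), which is summable. The first Borel--Cantelli lemma then shows that almost every \(\vartheta\) satisfies \(a_i(\vartheta)\ge\varphi(i)\) for only finitely many \(i\).

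For (i), first dispose of the case in which \(\varphi(i)\le 1\) for infinitely many \(i\). In that case the inequality \(a_i\ge\varphi(i)\) holds automatically at those indices, so nothing needs to be proved. We may therefore assume \(\varphi(i)>1\) for all large \(i\), so that \(1/\lceil\varphi(i)\rceil\ge 1/(2\varphi(i))\). Fix \(N\) and let \(E_{N,M}=\{\vartheta: a_i(\vartheta)<\varphi(i)\text{ for } N\le i\le M\}\). This set is a union of rank-\(M\) cylinders.

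Conditioning on the digits up to rank \(M\), the lower bound above removes at least a fraction \(1/(4\varphi(M+1))\) of every such cylinder when passing to \(E_{N,M+1}\). Hence
\[
|E_{N,M+1}|\le\Big(1-\tfrac{1}{4\varphi(M+1)}\Big)|E_{N,M}|,
\qquad\text{and so}\qquad
|E_{N,\infty}|\le\prod_{i\ge N}\Big(1-\tfrac{1}{4\varphi(i)}\Big).
\]
This product is zero because \(\sum_i 1/\varphi(i)=\infty\). Taking the union over \(N\), the set of \(\vartheta\) with \(a_i<\varphi(i)\) eventually has measure zero, which is (i).

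The main obstacle is the uniform two-sided cylinder estimate. It is what replaces independence in the second Borel--Cantelli lemma, and it requires care with the endpoints and orientation of the cylinders, whose orientation depends on the parity of \(i\). If this estimate is established cleanly from the recursion \eqref{eq:DiophantinePQ}, the rest is routine. An alternative route would go through the Gauss measure and its \(\psi\)-mixing property, but the direct cylinder computation above seems the most economical.
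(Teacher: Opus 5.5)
The paper gives no proof of this statement. It quotes the result from Bugeaud's book and uses it only as a black box in Lemma~\ref{lem:fullmeasure-new}, with $\varphi(j)=c_0 j$. So there is nothing in the paper to compare your route against, and your argument has to stand on its own. It does: it is correct, and it is the classical cylinder-measure proof found in Khinchin's monograph.

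\begin{itemize}
\item The two-sided estimate reduces to $\frac{Q_{i-1}+Q_{i-2}}{mQ_{i-1}+Q_{i-2}}=\frac{1+x}{m+x}$ with $x=Q_{i-2}/Q_{i-1}\in[0,1]$. This lies between $\frac{1}{m}$ and $\frac{2}{m}$, so your lower bound $\frac{1}{2m}$ can even be sharpened.
\item The orientation and parity concern you flag is harmless. Both interval lengths come directly from $|P_{i-1}Q_{i-2}-P_{i-2}Q_{i-1}|=1$, since $t\mapsto \frac{tP_{i-1}+P_{i-2}}{tQ_{i-1}+Q_{i-2}}$ is a monotone M\"obius map on $[1,\infty)$.
\item Part (ii) is then the first Borel--Cantelli lemma applied to $|\{a_i\ge\varphi(i)\}|\le 2/\varphi(i)$.
\item For part (i), you correctly handle the degenerate case $\varphi(i)\le 1$ infinitely often. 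You then start the iteration at an $N$ beyond which $\varphi(i)>1$. This is exactly what makes $\lceil\varphi(i)\rceil<2\varphi(i)$ and keeps every factor $1-\frac{1}{4\varphi(i)}$ in $(\tfrac34,1)$. The vanishing of $\prod_{i\ge N}\bigl(1-\frac{1}{4\varphi(i)}\bigr)$ follows from $1-y\le e^{-y}$ and $\sum_i 1/\varphi(i)=\infty$.
\end{itemize}

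The Gauss-measure and $\psi$-mixing alternative you mention would also work. However, the direct cylinder computation is the more elementary route and fully suffices here.
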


\subsection{Some function spaces}\label{subSection:FunctionSpaces}

    \subsubsection*{Sobolev space $H^s(\T^n)$ and $W^{k,p}(\T^n)$}
    Let $\mathcal{D}'(\mathbb{T}^n)$\label{def:SobolevHs} be the space of distributions on $\mathbb{T}^n$. 
    The Sobolev space $H^{s}(\mathbb{T}^n)$ is defined by 
    \begin{equation*}
        H^{s}(\mathbb{T}^n) = \left\lbrace f\in \mathcal{D}'(\mathbb{T}^n): (1+|\kappa|^2)^{\frac{s}{2}}|\widehat{f}(\kappa)| \in \ell^2(\mathbb{Z}^n) \right\rbrace,   
    \end{equation*}
    with the norm $\Vert f\Vert_{H^s(\T^n)} = \left(\sum_{\kappa\in \mathbb{Z}^n} (1+|\kappa|^2)^{\frac{s}{2}} |\widehat{f}(\kappa)|^2\right)^{1/2}$.

    For $k\in \mathbb{N}$, let 
    \begin{equation*}
        W^{k,p}(\mathbb{T}^n) = \left\lbrace f\in L^p(\mathbb{T}^n): \partial^\alpha f\in \ell^p(\mathbb{Z}^n)\;\text{for}\;|\alpha|\leq k \right\rbrace, 
    \end{equation*}
    with the norm $\Vert f\Vert_{W^{k,p}(\mathbb{T}^n)} = \sum_{|\alpha|\leq k} \Vert D^\alpha f\Vert_{L^p(\mathbb{T}^n)}$, where $\partial^{\alpha}f$ is the distributional derivative of $f$. It is known that $H^k(\T^n) = W^{k,2}(\T^n)$ if $k\in \N$.

\subsubsection*{The Besov space $W^s_{p,q}(\T^n)$} We construct a dyadic partition of unity as follows. Let $\chi \in C^\infty_c(\mathbb{R})$ be a smooth bump function such that, nonincreasing in $|s|$, with
\begin{equation*}
    \chi = 1\;\text{on}\;\left[-\frac{3}{4}, \frac{3}{4}\right], \qquad \mathrm{supp}(\chi) = \left[-\frac{4}{3}, \frac{4}{3}\right]. 
\end{equation*}
For $s>0$, we define
\begin{align}\label{eq:PaleyLittlewood-1}
    \varphi_{-1}(s) = \chi(s) ,
    \qquad 
    \varphi_0(s) = \chi\left(\frac{s}{2}\right) - \chi(s)  \geq 0,
    \qquad 
    \varphi_j(s) = \varphi_0(2^{-j}s) \geq 0, \quad j\geq 1. 
\end{align}
Then 
\begin{align}
    \mathrm{supp}\;\varphi_{-1} = \left[0, \tfrac{4}{3}\right], 		
    \quad 
    \mathrm{supp}\;\varphi_j = A_j := \left[\tfrac{3}{4}\cdot 2^j, \tfrac{8}{3}\cdot 2^j\right], \quad j\geq 0 \quad\text{and}\quad \sum_{j=-1}^\infty \varphi_j(s) = 1. \label{eq:annulus}
\end{align}
For convenience, we define for $j\geq 0$ the lattices
\begin{align}\label{eq:annulusZ}
	\mathcal{A}_j:= A_j\cap \Z^n. 
\end{align}
For $\xi \in \mathbb{R}^n$, we will, by a slight abuse of notation, write $\varphi_j(\xi) = \varphi_j(|\xi|)$. If $f\in L^1(\T^n)$, the Littlewood-Paley projections $\Delta_j f\in C^\infty(\T^n)$ are defined by 
\begin{align}\label{eq:PaleyLittlewood-2}
    \Delta_j f(x) 
    = \sum_{ \xi\in \mathcal{A}_j} \varphi_k(\xi) \widehat{f}(\xi)e^{2\pi i \xi \cdot x}, \qquad x\in \T^n. 
\end{align}
If $f\in L^1(\T^n)\cap L^2(\T^n)$, by Parseval's Theorem $\Vert f\Vert_{L^2(\T^n)}^2 = \sum_{\xi\in \Z^n} |\widehat{f}(\xi)|^2$, thus $f(x) = \sum_{k=-1}^\infty \Delta_kf(x)$ in $L^2(\T^n)$.   
Another basis result in \cite{stein_singular_1970, muscalu_classical_2013} is that if $f\in \mathcal{D}'(\T^n)$ then $\sum_{j=-1}^\infty \Delta_j f = f$ in $\mathcal{D}'(\T^n)$.

\begin{lem} Let $K_j:\T^n\to \R$ be the inverse Fourier transform (on $\T^n$) of $\varphi_j$, namely 
\begin{equation*}
    K_j(x) = \sum_{\xi\in \Z^n} \varphi_j(\xi) e^{2\pi i \xi \cdot x}, \qquad x\in \T^n,
\end{equation*}
where the sum is finite since $\mathrm{supp}(\varphi_j) \subset \left[\frac{3}{4}\cdot 2^{j}, \frac{8}{3}\cdot 2^j\right]$. 
    There exists $C(n)$ such that $\Vert K_j\Vert_{L^1(\T^n)} \leq C(n)$ for all $j\geq -1$. 
\end{lem}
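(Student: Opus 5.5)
The plan is to compare $K_j$ with the Euclidean Fourier transform via the Poisson summation formula and then exploit the dilation structure of the $\varphi_j$. Write $\Phi_j(\xi)=\varphi_j(|\xi|)$ for $\xi\in\R^n$. For $j\ge 0$ we have $\Phi_j(\xi)=\Phi_0(2^{-j}\xi)$, and $\Phi_0$ is smooth and compactly supported in the annulus $\{3/4\le|\xi|\le 8/3\}$, so it lies in the Schwartz class. For $j=-1$, $\Phi_{-1}(\xi)=\chi(|\xi|)$ is smooth because $\chi$ is constant near the origin. In either case $\check\Phi_j(x)=\int_{\R^n}\Phi_j(\xi)e^{2\pi i\xi\cdot x}\,d\xi$ is a Schwartz function on $\R^n$. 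Poisson summation then gives the identity
\[
K_j(x)=\sum_{m\in\Z^n}\check\Phi_j(x+m),\qquad x\in\T^n,
\]
which is valid since both sides are continuous and have the same Fourier coefficients $\Phi_j(\xi)$, $\xi\in\Z^n$. The series converges absolutely by the rapid decay of $\check\Phi_j$.

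From this periodization formula,
\[
\|K_j\|_{L^1(\T^n)}\le\int_{[0,1]^n}\sum_{m}|\check\Phi_j(x+m)|\,dx=\|\check\Phi_j\|_{L^1(\R^n)}.
\]
For $j\ge 0$ the dilation gives $\check\Phi_j(x)=2^{jn}\check\Phi_0(2^jx)$. Hence $\|\check\Phi_j\|_{L^1(\R^n)}=\|\check\Phi_0\|_{L^1(\R^n)}$, which is a finite constant depending only on $n$ and on the fixed bump $\chi$. The case $j=-1$ contributes the single constant $\|\check\Phi_{-1}\|_{L^1}$. Taking $C(n)$ to be the maximum of these two constants gives the bound for all $j\ge -1$.

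The only point needing care is the smoothness and decay of $\check\Phi_0$ and $\check\Phi_{-1}$. To see that $\check\Phi_0\in L^1$, I would integrate by parts $n+1$ times in $\xi$, which gives $|\check\Phi_0(x)|\le C(1+|x|)^{-n-1}$. This uses only that $\Phi_0\in C_c^\infty(\R^n)$, and that holds because the radial function $\varphi_0(|\xi|)$ vanishes near $\xi=0$. The radial function $\chi(|\xi|)$ is likewise smooth, since $\chi\equiv 1$ near $0$. Once this is checked, the uniform bound follows entirely from scale invariance of the $L^1$ norm under $L^1$-normalized dilation, and no further estimates are required.
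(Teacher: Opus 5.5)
Your proof is correct and follows essentially the same route as the paper: Poisson summation writes $K_j$ as the periodization of $2^{nj}\check\Phi_0(2^j\cdot)$, and the dilation invariance of the $L^1(\R^n)$ norm then gives the uniform bound. Your version is slightly more complete than the paper's. You bound $\|K_j\|_{L^1(\T^n)}$ directly by $\|\check\Phi_j\|_{L^1(\R^n)}$, and you treat the block $j=-1$ separately, which the paper's dilation formula does not cover.
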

\begin{proof} Let $ K_0(x) = \varphi_0^\vee(x) = \int_{\R^n}  \varphi_0(\xi) e^{2\pi i\xi \cdot x}\;d\xi$ for $x\in \T^n$,
then $\varphi_0^\vee \in\mathcal{S}(\R^n)$, the Schwartz class. 
There exists $C_{n+1}$ such that $|\varphi_0^\vee(x)|\leq C_{n+1}(1+|x|)^{-n-1}$ for $x\in \R^n$. Then, by the Poisson summation formula
\begin{equation}\label{eq:KjL1}
    K_j(x) = \sum_{\xi\in \Z^n} 2^{nj} \varphi_0^\vee(2^j(x+\xi))
\end{equation}
we obtain that $\Vert K_j\Vert_{L^1(\T^n)} \leq C_{n+1}$ uniformly in $j$.
\end{proof}

\subsection{Some embedding results}

\begin{lem}[Periodic Bernstein's inequality]\label{lem:BernsteinPaley} For $1\leq p\leq r\leq \infty$, the decomposition $\Delta_j f$ defined in \eqref{eq:PaleyLittlewood-2} satisfies
\begin{equation*}
	    \|\Delta_j f\|_{L^r(\T^n)}
    \leq C \cdot 2^{j\left(\frac{n}{p}-\frac{n}{r}\right)}\|\Delta_j f\|_{L^p(\T^n)}.
\end{equation*}
\end{lem}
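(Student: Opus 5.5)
The idea is to write $\Delta_j f$ as a convolution with a kernel whose Fourier support is slightly larger than the annulus $A_j$, and then apply Young's inequality. Choose a fixed $\psi_0\in C_c^\infty(\R^n)$ that equals $1$ on the support of $\varphi_0$, for instance $\psi_0=\varphi_{-1}(\cdot/2)+\varphi_0+\varphi_1$ restricted appropriately, or simply a bump equal to $1$ on $\{\tfrac34\le|\xi|\le\tfrac83\}$ and supported in $\{\tfrac12\le|\xi|\le 3\}$. Set $\psi_j=\psi_0(2^{-j}\cdot)$. Then $\psi_j\varphi_j=\varphi_j$, so
\[
\Delta_j f=\widetilde K_j*\Delta_j f,\qquad \widetilde K_j(x)=\sum_{\xi\in\Z^n}\psi_j(\xi)e^{2\pi i\xi\cdot x}.
\]
For $j=-1$ the same construction works with a bump equal to $1$ on $[0,\tfrac43]$. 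Since only finitely many $j$ are involved there, the constant is harmless.

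Young's inequality on $\T^n$ with $1+\tfrac1r=\tfrac1p+\tfrac1m$ gives
\[
\|\Delta_j f\|_{L^r}\le \|\widetilde K_j\|_{L^m}\|\Delta_j f\|_{L^p}.
\]
It therefore suffices to show that $\|\widetilde K_j\|_{L^m(\T^n)}\le C\,2^{jn(1-1/m)}=C\,2^{j(n/p-n/r)}$. To do this I would copy the proof of the previous lemma. Poisson summation gives $\widetilde K_j(x)=\sum_{\xi\in\Z^n}2^{nj}\psi_0^\vee(2^j(x+\xi))$, and $|\psi_0^\vee(y)|\le C(1+|y|)^{-n-1}$.

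The periodized sum is bounded pointwise on a fundamental cell $[-\tfrac12,\tfrac12)^n$ by
\[
C\,2^{nj}(1+2^j|x|)^{-n-1}+C\,2^{nj}\sum_{\xi\ne0}(2^j|\xi|/2)^{-n-1}\le C\,2^{nj}(1+2^j|x|)^{-n-1}+C2^{-j}.
\]
Its $L^m$ norm is then at most $C\,2^{nj}\big(\int_{\R^n}(1+2^j|x|)^{-m(n+1)}dx\big)^{1/m}+C=C\,2^{nj(1-1/m)}+C$, which is $\lesssim 2^{nj(1-1/m)}$ for $j\ge0$. In the case $m=\infty$, i.e. $p=1$ and $r=\infty$, the bound is simply $\|\widetilde K_j\|_\infty\le C2^{nj}$.

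The main obstacle is this periodization step, namely controlling the tail terms uniformly in $j$. The decay exponent $n+1$ makes the tail sum converge, and everything else is routine. An alternative, if one prefers to avoid it, is to interpolate between the two endpoints: the $L^1$ bound on $\widetilde K_j$, which is the previous lemma applied to $\psi_0$, and the trivial $L^\infty$ bound $\sum_\xi|\psi_j(\xi)|\le C2^{nj}$, which counts the lattice points in a ball of radius $3\cdot2^j$. By Hölder, $\|\widetilde K_j\|_{L^m}\le\|\widetilde K_j\|_{L^1}^{1/m}\|\widetilde K_j\|_{L^\infty}^{1-1/m}\le C2^{nj(1-1/m)}$. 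This is cleaner and is the route I would actually write.
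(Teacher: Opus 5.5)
Your proof is correct, but it follows a different route from the paper. The paper's proof is a one-line citation of the Euclidean Bernstein inequality (Bahouri--Chemin--Danchin). The passage from $\R^n$ to $\T^n$ is left implicit there, even though $\Delta_j f$ is a trigonometric polynomial and so does not lie in $L^p(\R^n)$ for $p<\infty$.

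You instead run the standard argument directly on the torus:
\begin{itemize}
\item the reproducing identity $\Delta_j f=\widetilde K_j*\Delta_j f$;
\item Young's inequality on $\T^n$ with $1-\frac1m=\frac1p-\frac1r$;
\item the interpolation bound $\|\widetilde K_j\|_{L^m}\leq\|\widetilde K_j\|_{L^1}^{1/m}\|\widetilde K_j\|_{L^\infty}^{1-1/m}\leq C\,2^{nj(1-1/m)}$.
\end{itemize}
In the last step, the $L^1$ factor is exactly the paper's Poisson-summation lemma applied to $\psi_0$, and the $L^\infty$ factor is a lattice-point count. Your route gives a self-contained proof of the periodic statement using only tools the paper already has, with a constant depending only on $n$ and $\psi_0$. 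The citation buys brevity, at the cost of leaving the transference step unstated.

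Your alternative pointwise periodization route is also sound. For $x\in[-\frac12,\frac12)^n$ and $\xi\neq0$ one has $|x_i+\xi_i|\geq|\xi_i|/2$ in every coordinate, so the tail bound you wrote holds as stated.

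There is one small slip in the choice of cutoff. The sum $\varphi_{-1}(\cdot/2)+\varphi_0+\varphi_1=\chi(\cdot/2)-\chi+\chi(\cdot/4)$ is not identically $1$ on $\operatorname{supp}\varphi_0$; for instance it equals $2$ at $|\xi|=\frac32$. You presumably meant $\varphi_{-1}+\varphi_0+\varphi_1=\chi(\cdot/4)$, which equals $1$ on $\{|\xi|\leq 3\}$. Neither of your kernel estimates uses the annular shape of the Fourier support. So you can simply take $\psi_j=\chi(2^{-j-2}\,\cdot)$ for every $j\geq-1$. This equals $1$ on $\operatorname{supp}\varphi_j$ and removes the need for a separate treatment of $j=-1$.
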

\begin{proof} The proof follows from Bernstein's inequality on $\R^n$, see, e.g.,
\cite[Lemma 2.1 and Proposition 2.20]{hajer_bahouri_fourier_nodate}.
\end{proof}

\begin{lem}[Besov Embedding] \label{lem:besovEmbeddingGeneral}
Let $s>0$. 
    \begin{itemize}
        \item[(a)] We have $B^s_{p_2,q}(\T^n) \hookrightarrow B^s_{p_1,q}(\T^n)$ if $p_1\leq p_2$. 

        \item[(b)]
        For $s>0$ and $p_1,p_2,q\in [1,\infty]$, we have 
        \begin{align}\label{eq:besov-basic}
            B^{s_1}_{p_1,q}(\T^n) \hookrightarrow B^{s_2}_{p_2,q}(\T^n) \qquad\text{if}\qquad s_2<s_1, s_2 - \frac{n}{p_2} \leq s_1-\frac{n}{p_1}, p_1\leq p_2. 
        \end{align}
        As a consequence, for $\sigma\geq 0$ and $1\leq p\leq 2$ we have
    \begin{equation}\label{eq:besov2}
         B^{\sigma+n}_{1,1}(\T^n)
        \hookrightarrow
    B^{\sigma+n/p}_{p,1}(\T^n) 
        \hookrightarrow
    B^{\sigma+n/2}_{2,1}(\T^n) 
    \hookrightarrow
    B^{\sigma}_{\infty,1}(\T^n) 
    \hookrightarrow
    B^{\sigma}_{\infty,\infty}(\T^n) .
    \end{equation}
        
        \item[(c)] For $1\leq p\leq \infty$, we have
        \begin{equation}\label{eq:embedd3}
        A^s(\T^n) \subset B^{s}_{\infty,1}(\T^n)\subset B^{s}_{p,1}(\T^n)\subset B^{s}_{1,1}(\T^n).
    \end{equation}
    with $\Vert f\Vert_{B^s_{1,1}(\T^n)}
        \leq 
        \Vert f\Vert_{B^s_{p,1}(\T^n)}
        \leq 
        \Vert f\Vert_{B^s_{\infty,1}(\T^n)} \leq \left(\frac{4}{3}\right)^s \Vert f\Vert_{A^s(\T^n)}$, where $A^s$ is the Wiener Algebra, defined in \eqref{eq:AsWienerAlgebra}.
    \end{itemize}
\end{lem}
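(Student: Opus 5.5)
The plan is to prove each part separately, working throughout with the dyadic blocks $\Delta_j f$. The inequalities on Fourier coefficients come from the support properties of $\varphi_j$ in \eqref{eq:annulus} and the uniform $L^1$ bound on the kernels $K_j$. The chain \eqref{eq:besov2} will then follow by composing (a), (b) and (c) with the trivial inclusion $\ell^1\subset\ell^\infty$ on the sequence $(2^{js}\|\Delta_j f\|)_j$.

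For (a), the argument is immediate on the torus: $\mathbb{T}^n$ has measure one, so Hölder's inequality gives $\|\Delta_j f\|_{L^{p_1}}\le\|\Delta_j f\|_{L^{p_2}}$ whenever $p_1\le p_2$. Multiplying by $2^{js}$ and taking the $\ell^q$ norm over $j$ yields the embedding with constant one.

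For (b), I will use Lemma \ref{lem:BernsteinPaley} with $p=p_1$ and $r=p_2$. It gives
\[
2^{js_2}\|\Delta_j f\|_{L^{p_2}}\le C\,2^{j(s_2+n/p_1-n/p_2)}\|\Delta_j f\|_{L^{p_1}}\le C\,2^{js_1}\|\Delta_j f\|_{L^{p_1}},
\]
where the second step uses the hypothesis $s_2-n/p_2\le s_1-n/p_1$. The block $j=-1$ is harmless because the powers of $2$ there are bounded. Taking the $\ell^q$ norm gives \eqref{eq:besov-basic}. The chain \eqref{eq:besov2} is obtained by successive applications:
\begin{itemize}
\item $(s_1,p_1)=(\sigma+n,1)$ to $(\sigma+n/p,p)$;
\item then to $(\sigma+n/2,2)$;
\item then to $(\sigma,\infty)$, each time with equality in the index condition.
\end{itemize}
A strict inequality $s_2<s_1$ is not actually needed for the Bernstein argument, and I would remark on this. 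If the statement insists on it, the first step with $p=1$ is an identity map. The last inclusion, $B^\sigma_{\infty,1}\subset B^\sigma_{\infty,\infty}$, is just $\sup\le\sum$.

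For (c), the inclusions $B^s_{\infty,1}\subset B^s_{p,1}\subset B^s_{1,1}$ with norm constant one follow from (a). For $A^s\subset B^s_{\infty,1}$, I will bound each block directly:
\[
\|\Delta_j f\|_{L^\infty}\le\sum_{\xi\in\mathcal{A}_j}\varphi_j(\xi)|\widehat f(\xi)|\le\sum_{\xi\in\mathcal A_j}|\widehat f(\xi)|,
\]
using $0\le\varphi_j\le1$. On $\mathcal A_j$ we have $|\xi|\ge\tfrac34 2^j$, hence $2^{j}\le\tfrac43|\xi|\le\tfrac43(1+|\xi|)$, so $2^{js}\|\Delta_jf\|_\infty\le(4/3)^s\sum_{\xi\in\mathcal A_j}(1+|\xi|)^s|\widehat f(\xi)|$. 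Summing over $j$, each $\xi$ is counted at most twice because at most two annuli overlap. This would give a factor $2(4/3)^s$.

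The main obstacle is matching the exact constant $(4/3)^s$ claimed in the statement. To get it, I would keep the weights: use $\sum_j\varphi_j(\xi)=1$ and write
\[
\sum_j 2^{js}\varphi_j(\xi)\le\Big(\tfrac43\Big)^s(1+|\xi|)^s\sum_j\varphi_j(\xi)=\Big(\tfrac43\Big)^s(1+|\xi|)^s,
\]
which removes the overlap factor. The $j=-1$ term is handled the same way, since $2^{-s}\le(4/3)^s(1+|\xi|)^s$. Exchanging the order of summation then gives exactly $\|f\|_{B^s_{\infty,1}}\le(4/3)^s\|f\|_{A^s}$.
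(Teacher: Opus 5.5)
Your proposal is correct. Parts (a) and (c) follow the paper's argument. In (c) the paper also keeps the weights, using $2^{js}\varphi_j(\xi)\le(4/3)^s|\xi|^s\varphi_j(\xi)$, and sums with $\sum_j\varphi_j=1$; this is your refined step, so the overlap factor $2$ never appears.

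Part (b) is where your route genuinely differs. The paper gives no argument there and only cites Triebel. Your derivation from Lemma~\ref{lem:BernsteinPaley} with $(p,r)=(p_1,p_2)$ is the standard self-contained proof. It also shows that the hypothesis $s_2<s_1$ is not needed. That matters for \eqref{eq:besov2}: the first link has $s_2=s_1$ when $p=1$, and the second link has $s_2=s_1$ when $p=2$. You flagged the first case; the second is the same identity map.

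Two small remarks on the constant in (c).

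\begin{enumerate}
\item The definition \eqref{eq:AsWienerAlgebra} uses the weight $(1+|\kappa|^2)^{s/2}$, which is smaller than $(1+|\xi|)^s$. Your bound $2^j\le\frac43(1+|\xi|)$ therefore only gives $\|f\|_{B^s_{\infty,1}}\le(4/3)^s\sum_\xi(1+|\xi|)^s|\widehat f(\xi)|$. That is not "exactly" the stated inequality. To get it, use $2^j\le\frac43|\xi|\le\frac43(1+|\xi|^2)^{1/2}$ on $\mathcal{A}_j$ for $j\ge0$, and $2^{-s}\le1\le(4/3)^s(1+|\xi|^2)^{s/2}$ for $j=-1$. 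The paper's introduction uses $(1+|\xi|)^s$, so the paper itself is inconsistent on this point.
\item Your explicit handling of the block $j=-1$ is more careful than the paper's. The paper's intermediate bound by $(4/3)^s\sum_\xi|\xi|^s|\widehat f(\xi)|$ fails at the zero mode. There $|\xi|^s=0$, but $\Delta_{-1}f$ still contains $\widehat f(0)$.
\end{enumerate}
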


\begin{proof}
{\it (a).} 
On a finite measure space, if $p_1\leq p_2$ then $\Vert \cdot \Vert_{L^{p_1}(\T^n)} \leq \Vert \cdot \Vert_{L^{p_2}(\T^n)}$. Therefore the conclusion follows from Definition of $B^s_{p,q}(\T^n)$.

\medskip
        
\noindent
{\it (b).} 
The result \eqref{eq:besov-basic} follows from \cite{triebel_theory_1983}, which implies the first four embeddings in \eqref{eq:besov2}. The last embedding is obvious since $\Vert f\Vert_{B^\sigma_{\infty,\infty}(\T^n)} \leq \Vert f\Vert_{B^\sigma_{\infty,1}(\T^n)}$.

\medskip
        
\noindent
{\it (c).} 
For $\xi \in \Z^n $ then $\widehat{\Delta_k f}(\xi) = \varphi_k(\xi) \widehat{f}(\xi)$. 
We have $\Vert f\Vert_{B^s_{\infty,1}(\T^n)} 
    = 
    \sum_{k=-1}^\infty 2^{ks}\Vert \Delta_k f\Vert_{L^\infty(\T^n)}$. 
On the annulus $\mathcal{A}_k = \left[\frac{3}{4}\cdot 2^k, \frac{8}{3}\cdot 2^k\right] $ we have $\left(\frac{8}{3}\right)^{s}\cdot 2^{ks} \geq |\xi|^s \geq \left(\frac{3}{4}\right)^{s}\cdot 2^{ks}$. Therefore
\begin{align*}
\frac{1}{\pi C_\omega} \left(\frac{3}{4}\right)^{s} \sum_{|\xi| \in \mathcal{A}_k}2^{ks}|\widehat{f}(\xi)|
\leq 
    \frac{1}{\pi C_\omega}
    \sum_{|\xi| \in \mathcal{A}_k} |\xi|^s     \big|\widehat{f}(\xi)\big|    \leq  \frac{1}{\pi C_\omega} \left(\frac{8}{3}\right)^{s} \sum_{|\xi| \in \mathcal{A}_k}2^{ks}|\widehat{f}(\xi)|.
\end{align*}
From $\mathrm{supp}\varphi_k = \mathcal{A}_k$ we deduce that 
    \begin{align*}
        \left(\frac{3}{4}\right)^{s}\cdot 2^{ks} \varphi_k(\xi)
        \leq 
        |\xi|^{s}\varphi_k(\xi) \leq \left(\frac{8}{3}\right)^s\cdot 2^{ks} \varphi_k(\xi).
    \end{align*}
    Thus
    \begin{equation*}
        \left(\frac{3}{4}\right)^s\left(\sum_{k=-1}^\infty 2^{ks}\varphi_k(\xi)|\widehat{f}(\xi)|\right)
        \leq
        \underbrace{
            \sum_{k=-1}^\infty |\xi|^s \varphi_k(\xi) |\widehat{f}(\xi)| 
        }_{|\xi|^s|\widehat{f}(\xi)|}
        \leq \left(\frac{8}{3}\right)^s \left(\sum_{k=-1}^\infty 2^{ks}\varphi_k(\xi)|\widehat{f}(\xi)|\right). 
    \end{equation*}
We also have, for $k\geq 0$ that
\begin{equation*}
\Vert \Delta_k f(x)\Vert_{L^1(\T^n)} \leq             \Vert \Delta_k f(x)\Vert_{L^\infty(\T^n)} \leq
    \sum_{ \xi\in \mathcal{A}_k} \varphi_k(\xi) |\widehat{f}(\xi)|.
\end{equation*}
Therefor, for any $1\leq p\leq \infty$ then
\begin{align*}
    & \sum_{k=-1}^\infty 2^{ks} \Vert \Delta_k f(x)\Vert_{L^1(\T^n)}
    \leq 
    \sum_{k=-1}^\infty 2^{ks} \Vert \Delta_k f(x)\Vert_{L^p(\T^n)}
    \leq 
    \sum_{k=-1}^\infty 2^{ks} \Vert \Delta_k f(x)\Vert_{L^\infty(\T^n)} \\
    &\quad  \leq
    \sum_{k=-1}^\infty  \sum_{ \xi\in \mathcal{A}_k} 2^{ks}\varphi_k(\xi) |\widehat{f}(\xi)| 
    = \sum_{\xi\in \Z^n} 2^{ks}\varphi_k(\xi) |\widehat{f}(\xi)|
    \leq \left(\frac{4}{3}\right)^s\sum_{\xi\in \Z^n} |\xi|^s |\widehat{f}(\xi)|. 
\end{align*}
In other words, we have $\Vert f\Vert_{B^s_{1,1}(\T^n)}
        \leq 
        \Vert f\Vert_{B^s_{p,1}(\T^n)}
        \leq 
        \Vert f\Vert_{B^s_{\infty,1}(\T^n)} \leq \left(\frac{4}{3}\right)^s \Vert f\Vert_{A^s(\T^n)}$.         
\end{proof}

The following Lemma gives a characterization of H\"older space $C^{0,\alpha}(\T^n)$ and the Besov space $B^\alpha_{\infty,\infty}(\T^n)$ (see \cite{stein_singular_1970}, \cite[Lemma 8.6]{muscalu_classical_2013}). 

\begin{lem}[Corrspondence between Besov and H\"older spaces] \label{lem:BesovHolderEmbed} \quad 
\begin{itemize}
    \item[(a)] For $s>0$, we have
    \begin{align}\label{eq:Csalpha}
        C^{[s],\alpha}(\T^n) = B^s_{\infty,\infty}(\T^n), \qquad s = [s]+\alpha, \alpha\in ( 0,1). 
    \end{align}
    In particular, $C^{0,\alpha}(\T^n) = B^\alpha_{\infty, \infty}(\T^n)$ for $\alpha<1$, and $C^{0,1}(\T^n) \subsetneq B^1_{\infty, \infty}(\T^n)$. Moreover, we have
    \begin{align}\label{eq:estBesovLessHolder}
            \Vert f\Vert_{B^\alpha_{\infty, \infty}(\T^n)} \leq C(n)\Vert f\Vert_{C^{0,\alpha}(\T^n)}. 
    \end{align}
    
    \item[(b)] For all $\varepsilon>0$ and $\alpha\in(0,1)$, we have $B^\alpha_{\infty, 1}(\mathbb{T}^n) \hookrightarrow B^{\alpha}_{\infty,\infty}(\T^n) = C^{0,\alpha}(\mathbb{T}^n) \hookrightarrow B^{\alpha-\varepsilon}_{\infty, 1}(\mathbb{T}^n)$.

    \item[(c)] 
    If $f\in B^\alpha_{\infty, \infty}(\T^n)$ for $\alpha>0$, or if $f\in B^0_{\infty, 1}(\T^n)$ then 
    $\sum_{j=-1}^N \Delta_j f(x) \to f(x) $ as $N\to \infty$
    absolutely and uniformly in $C(\T^n)$.

    \end{itemize}
\end{lem}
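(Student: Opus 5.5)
The lemma is a compilation of classical facts, so the plan is to reduce each part to the dyadic characterization of $B^s_{\infty,\infty}$ and the uniform $L^1$ bound on the kernels $K_j$ proved above. Write $\Delta_j f = K_j * f$, and use that $K_j$ has mean zero for $j\ge0$ (since $\varphi_j(0)=0$).

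\textbf{Part (a), estimate \eqref{eq:estBesovLessHolder}.} For $j\ge 0$ we write
\[
\Delta_j f(x)=\int_{\T^n}K_j(y)\big(f(x-y)-f(x)\big)\,dy ,
\]
so that $|\Delta_j f(x)|\le [f]_{C^{0,\alpha}}\int |K_j(y)||y|^\alpha\,dy$. Here the distance on $\T^n$ is taken modulo $\Z^n$, which is harmless. By the Poisson formula \eqref{eq:KjL1} and the decay $|\varphi_0^\vee(x)|\le C(1+|x|)^{-n-2}$ (any Schwartz decay), $\int |K_j(y)||y|^\alpha dy \le \int_{\R^n}2^{nj}|\varphi_0^\vee(2^jz)||z|^\alpha dz = C2^{-j\alpha}$. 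For $j=-1$ we use $\|\Delta_{-1}f\|_\infty\le C\|f\|_\infty$. This gives $\sup_j 2^{j\alpha}\|\Delta_j f\|_\infty\le C(n)\|f\|_{C^{0,\alpha}}$, and the same argument gives the inclusion $C^{0,1}\subset B^1_{\infty,\infty}$. For higher $s=k+\alpha$ we apply this to $D^\beta f$, $|\beta|=k$, and use $\|\Delta_j f\|_\infty\sim 2^{-jk}\sum_{|\beta|=k}\|\Delta_j D^\beta f\|_\infty$ for $j\ge0$, which follows by writing $\Delta_j=\tilde\Delta_j\Delta_j$ with a Fourier multiplier $\tilde\varphi(\xi)|\xi|^{-2k}(\ldots)$ whose kernel has uniformly bounded $L^1$ norm.

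\textbf{Converse inclusion.} Given $f\in B^\alpha_{\infty,\infty}$ with $\alpha\in(0,1)$, we split $f(x+h)-f(x)=\sum_j(\Delta_jf(x+h)-\Delta_jf(x))$. For $2^j\le|h|^{-1}$ we use Bernstein's inequality for the gradient, $\|\nabla\Delta_j f\|_\infty\le C2^j\|\Delta_jf\|_\infty$ (again a uniformly $L^1$ kernel), which gives terms $\lesssim |h|2^{j(1-\alpha)}$. For $2^j>|h|^{-1}$ we use the trivial bound $2\cdot 2^{-j\alpha}$. Summing both geometric series yields $C|h|^\alpha$; this is exactly where $\alpha<1$ is needed. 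Strictness of $C^{0,1}\subsetneq B^1_{\infty,\infty}$ is witnessed by a lacunary Weierstrass-type example such as $\sum_j 2^{-j}\cos(2\pi 2^j x_1)$, which lies in $B^1_{\infty,\infty}$ but is not Lipschitz. I would cite \cite{stein_singular_1970} for this rather than redo it.

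\textbf{Parts (b) and (c).} For (b), the inclusion $B^\alpha_{\infty,1}\subset B^\alpha_{\infty,\infty}$ is the trivial $\ell^1\subset\ell^\infty$ comparison. The other inclusion follows from
\[
\sum_j 2^{j(\alpha-\eps)}\|\Delta_j f\|_\infty\le \sup_j 2^{j\alpha}\|\Delta_jf\|_\infty\sum_j2^{-j\eps}.
\]
For (c), in either case $\sum_j\|\Delta_jf\|_\infty<\infty$: for $\alpha>0$ this is $\sum_j 2^{-j\alpha}\|f\|_{B^\alpha_{\infty,\infty}}$, and for $B^0_{\infty,1}$ it holds by definition. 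The partial sums therefore converge absolutely and uniformly to some continuous $g$. Since $\sum_j\Delta_jf=f$ in $\mathcal D'$, we get $g=f$.

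The only genuinely delicate step is the higher-order identification $C^{k,\alpha}=B^{k+\alpha}_{\infty,\infty}$, where the derivative-commuting multiplier estimate must be justified. There I would lean on \cite[Lemma 8.6]{muscalu_classical_2013} and \cite{triebel_theory_1983}.
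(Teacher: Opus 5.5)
Your proposal is correct. For (b) and (c) it matches the paper's proof: the $\ell^1\subset\ell^\infty$ comparison, the geometric factor $\sum_j 2^{-j\varepsilon}$, and absolute convergence of $\sum_j\|\Delta_j f\|_{L^\infty}$, with the uniform limit identified with $f$ through convergence in $\mathcal{D}'(\T^n)$.

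The difference is in (a). There the paper gives no argument of its own: it cites Stein for \eqref{eq:Csalpha} and \cite[Lemma 8.6]{muscalu_classical_2013} for $C^{0,\alpha}=B^\alpha_{\infty,\infty}$. It never separately justifies the bound \eqref{eq:estBesovLessHolder} or the strictness of $C^{0,1}\subsetneq B^1_{\infty,\infty}$. You instead prove \eqref{eq:estBesovLessHolder} directly, in two steps:
\begin{itemize}
\item[(1)] you use that $K_j$ has mean zero for $j\ge 0$;
\item[(2)] you use the moment bound $\int_{\T^n}|K_j(y)|\,|y|^\alpha\,dy\lesssim 2^{-j\alpha}$, read off from the Poisson representation \eqref{eq:KjL1}.
\end{itemize}
For the reverse inclusion you split the Littlewood--Paley series at $2^j\approx |h|^{-1}$ and apply Bernstein's inequality to $\nabla\Delta_j f$. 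This gives a constant that is explicit and uniform in $\alpha\in(0,1]$. It also makes visible the $|h|\log(1/|h|)$ loss at $\alpha=1$, which is the mechanism behind the strict inclusion and behind the $\alpha=1$ computation in Proposition~\ref{prop:Thm12ii}. The paper's citation route is shorter but leaves these points unproved in the text.

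Two small remarks. First, your reverse-inclusion step uses the pointwise identity $f=\sum_j\Delta_j f$, so logically (c) should come first. Since (c) does not depend on (a), this is only a matter of ordering. Second, the non-Lipschitz property of $\sum_j 2^{-j}\cos(2\pi 2^j x_1)$ needs no citation. A Lipschitz function has $f'\in L^\infty\subset L^2$, yet Parseval gives $\sum_j (2\pi 2^j)^2\big(|\widehat f(2^j)|^2+|\widehat f(-2^j)|^2\big)=\sum_j 2\pi^2=\infty$.
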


\begin{proof}
    We recall that $f\in B^\alpha_{\infty, \infty}(\T^n)$ if and only if
    \begin{equation*}
         \sup_{j\geq -1} 2^{j\alpha} \Vert \Delta_j f\Vert_{L^\infty(\T^n)} \leq C:=\Vert f\Vert_{B^\alpha_{\infty, \infty}(\T^n)}. 
    \end{equation*}

\noindent
{\it (a).} We refer to \cite{stein_singular_1970} for \eqref{eq:Csalpha}. 
The fact that for $\alpha\in (0,1)$ then $B^{\alpha}_{\infty,\infty}(\T^n) = C^{0,\alpha}(\mathbb{T}^n)$ follows from \cite[Lemma 8.6]{muscalu_classical_2013}. 
        \medskip

\noindent
{\it (b).} If $f\in C^{0,\alpha}(\T^n) = B^{\alpha}_{\infty, \infty}(\T^n)$ then for $j\in \{-1,0,1,\ldots\}$ we have $2^{j\alpha} \Vert \Delta_j f\Vert_{L^\infty(\T^n)} \leq \Vert f\Vert_{B^\alpha_{\infty, \infty}(\T^n)}$. 
        Therefore
        \begin{align*}
            \Vert f\Vert_{B^{\alpha-\varepsilon}_{\infty,1}(\T^n)} =  \sum_{j=-1}^\infty 2^{j(\alpha-\varepsilon)} \Vert  \Delta _jf\Vert_{L^\infty(\T^n)} 
            &= 
            \sum_{j=-1}^\infty 2^{-j\varepsilon} 2^{j\alpha} \Vert  \Delta _jf\Vert_{L^\infty(\T^n)} \\
            &\leq  \Vert f\Vert_{B^\alpha_{\infty,\infty}(\T^n)} \sum_{j=-1}^\infty 2^{-j\varepsilon}  = C(\varepsilon)  \Vert f\Vert_{B^\alpha_{\infty,\infty}(\T^n)}. 
        \end{align*}
        Thus $B^\alpha_{\infty,\infty}(\T^n) = C^{0,\alpha}(\T^n) \hookrightarrow B^{\alpha-\varepsilon}_{\infty,1}(\T^n)$ for any $\varepsilon>0$ small enough. 
\medskip

\noindent
{\it (c).} We have $\Vert \Delta_j f\Vert_{L^\infty(\T^n)} \leq 2^{-j\alpha} \Vert f\Vert_{B^\alpha_{\infty, \infty}(\T^n)}$ for $j\geq -1$, hence
            \begin{align*}
                \sum_{j=-1}^\infty \Vert \Delta_j f\Vert_{L^\infty(\T^n)} \leq 
            \Vert f\Vert_{B^\alpha_{\infty, \infty}(\T^n)} \sum_{j=-1}^\infty 2^{-j\alpha} < \infty.
            \end{align*}
Therefore, the Littlewood--Paley series $\sum_{j=-1}^\infty \Delta_j f$ converges absolutely and uniformly in \(C(\mathbb T^n)\). Since the Littlewood--Paley series converges to \(f\) in distributions, the
uniform limit must coincide with \(f\).
\end{proof}

\section{Rate of convergence of the Birkhoff average}\label{Sec:Rate}

\subsection{Proof of Theorem \ref{thm:BesovHolderCounting}: Rate of convergence for Besov space}
\label{subSec:thm11}

We split the proof of Theorem~\ref{thm:BesovHolderCounting} into two Propositions \ref{prop:thm1part1} and \ref{prop:thm1part2}, corresponding to parts (i), (ii), respectively. By translation invariance and by subtracting the mean of $f$, we may assume without loss of generality that $x=0$ and $\int_{\T^n} f(y)\,dy=0$. 
We state the following key lemma, which will serve as a main ingredient in the proof of Theorem~\ref{thm:BesovHolderCounting}.

\begin{lem}\label{lem:keyThm1}
Let $p\in (1,\infty)$, $n\geq 2$, $\omega\in \mathcal{D}(\sigma, C_\omega,n)$ and let $\mathcal{A}_j$ be defined as in \eqref{eq:annulusZ}. We have
\begin{align*}
	\left(	\sum_{\xi\in \mathcal{A}_j} \frac{1}{|\xi\cdot \omega| ^p } \right)^{1/p} \leq  \frac{C_p}{\delta_j}, 
\qquad\text{where}\qquad 
	\delta_j = C_\omega \left( \frac{3}{16}\right)^\sigma 2^{-j\sigma},\qquad  C_p=\left(2 \sum_{m=1}^\infty \frac{1}{m^p}\right)^{1/p}. 
\end{align*}
\end{lem}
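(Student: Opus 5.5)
Our plan is to show that on a single dyadic shell the small divisors $\xi\cdot\omega$ are separated from one another by at least $2\delta_j$, so that after sorting by size they are bounded below by $m\delta_j$ with at most two values per $m$. This yields the desired $\ell^p$ bound by comparison with $\sum_m m^{-p}$.

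\textbf{Step 1 (separation).} For distinct $\xi,\xi'\in\mathcal{A}_j$, the difference $k=\xi-\xi'$ is a nonzero lattice vector with $|k|\leq |\xi|+|\xi'|\leq \tfrac{16}{3}\,2^{j}$. The Diophantine condition \eqref{eq:diophan} then gives
\[
|\xi\cdot\omega-\xi'\cdot\omega| = |k\cdot\omega| \geq \frac{C_\omega}{|k|^\sigma} \geq C_\omega\Big(\frac{3}{16}\Big)^{\sigma}2^{-j\sigma} = \delta_j .
\]
Each single value also satisfies $|\xi\cdot\omega|\geq C_\omega(8/3)^{-\sigma}2^{-j\sigma}\geq \delta_j$, since $0\notin\mathcal{A}_j$ for $j\geq 0$. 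Hence the real numbers $\{\xi\cdot\omega:\xi\in\mathcal{A}_j\}$ are pairwise $\delta_j$-separated, and none of them lies in $(-\delta_j,\delta_j)$.

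\textbf{Step 2 (counting).} We treat the positive and negative values separately. Order the positive values as $0<t_1<t_2<\cdots$. Since $t_1\geq\delta_j$ and consecutive gaps are at least $\delta_j$, induction gives $t_m\geq m\delta_j$. The same argument applies to the absolute values of the negative ones. Therefore
\[
\sum_{\xi\in\mathcal{A}_j}\frac{1}{|\xi\cdot\omega|^p}\leq 2\sum_{m=1}^\infty \frac{1}{(m\delta_j)^p} = \frac{C_p^p}{\delta_j^p},
\]
and the sum converges because $p>1$. Taking $p$-th roots gives the claim.

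The only delicate point is Step 1: getting the constant $3/16$ requires the bound $|\xi-\xi'|\leq 2\cdot\tfrac83 2^j$ from the support of $A_j$ in \eqref{eq:annulus}. We also need to ensure that $\xi\neq\xi'$ forces distinct values, which follows since $k\neq0$ implies $k\cdot\omega\neq0$. The case $j=-1$ is not covered by the annulus definition \eqref{eq:annulusZ}, and presumably the lemma is meant only for $j\geq0$. Nothing else is needed.
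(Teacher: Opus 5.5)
Your proof is correct and follows the paper's argument essentially step for step. You apply the Diophantine condition to $\xi-\xi'$ with $|\xi-\xi'|\leq\frac{16}{3}2^j$ to get $\delta_j$-separation, use $|\xi\cdot\omega|\geq\delta_j$, and then order the positive and negative values separately to obtain $|t_m|\geq m\delta_j$. (Your opening sentence announces $2\delta_j$-separation, but Step 1 correctly proves, and Step 2 only needs, $\delta_j$-separation.)
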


\begin{proof}
  Recall that $\mathcal{A}_j := \left[\frac{3}{4}\cdot 2^j, \frac{8}{3}\cdot 2^j\right] \cap \Z^n$ for $ j\geq 0$. For $j\geq 0$ we define
\begin{equation*}
    S_j = \{\xi\cdot \omega: \xi\in \mathcal{A}_j\}    = S_j^+ \cup S_j^-, \qquad\text{where}\; 
    S^+_j = \{x\in S_j: x>0\}, S^-_j = \{x\in S_j: x<0\}. 
\end{equation*}
For any $x\in S_j$, we have $x=\xi\cdot\omega$ for $\xi\in \mathcal{A}_j$, and thus
\begin{align*}
    |x| = |\xi\cdot\omega| \geq \frac{C_\omega}{|\xi|^\sigma} \geq C_\omega\left(\frac{3}{8}\right)^\sigma \cdot 2^{-j\sigma} \geq \delta_j. 
\end{align*}
Furthermore, for any distinct $x,y\in S_j$, by the same argument, we have $x=\xi_1\cdot \omega, y = \xi_2\cdot\omega$ for $\xi_1,\xi_2 \in \mathcal{A}_j$, which means $\xi_1\neq \xi_2$ and 
\begin{equation*}
    0< |\xi_1-\xi_2|\leq \frac{16}{3}\cdot 2^{j} .
\end{equation*}
Therefore, using the Diophantine condition for $\xi_1-\xi_2 \in \Z^n\backslash \{0\}$ we obtain
\begin{equation*}
    |x-y| = |(\xi_1-\xi_2)\cdot \omega| \geq \frac{C_\omega}{|\xi_1-\xi_2|^\sigma} \geq  C_\omega\left(\frac{3}{16}\right)^\sigma\cdot 2^{-j\sigma} = \delta_j.
\end{equation*}
We have 
\begin{align*}
    |x|\geq \delta_j, 
    |x-y|\geq \delta_j
    \qquad\text{for all}\; x, y\in S_j\;\text{and}\; x\neq y. 
\end{align*}
Let $\overline{K},\underline{K}$ be size of $S^+_j, S^-_j$, respectively. We can order elements of $S^+_j = \{x_k: k=1,\ldots, \overline{K}\}$ and $S^-_j = \{y_k: k=1,\ldots, \underline{K}\}$, we have
\begin{align*}
    x_{\overline{K}} > \ldots >  x_1 > 0 &\qquad\text{with}\qquad 
    x_m \geq +m\delta_j &&\quad \text{for all}\;x_m\in S^+_j \\
    y_{\underline{K}} <\ldots < y_1 < 0 &\qquad\text{with}\qquad 
    y_m\leq -m\delta_j &&\quad \text{for all}\;y_m\in S^-_j.    
\end{align*}
We compute
\begin{align*}
	\left( \sum_{\xi\in \mathcal{A}_j} \frac{1}{|\xi\cdot\omega|^p} \right)^{1/p} 
	\leq 
		\left( \sum_{x\in S_j} \frac{1}{|x|^p} \right)^{1/p} 
	\leq 
		\left( \sum_{m=1}^\infty \frac{2}{(m\delta_j	)^p} \right)^{1/p}  = \frac{1}{\delta_j} \left(\sum_{m=1}^\infty \frac{2}{m^p}\right)^{1/p}. 	
\end{align*}
We obtain the conclusion.
\end{proof}

\begin{prop}\label{prop:thm1part1}
     Let $n\geq 2$, $p\in (1,2]$, $\omega\in \mathcal{D}(\sigma, C_\omega,n)$, and $x\in \T^n$. 
     \begin{itemize}
     \item[(i)] If $s\geq  \frac{n}{p}$ and $f\in B^s_{p,1}(\T^n)$ then     
     \begin{equation}
     \begin{aligned}
     &\quad 
            \left|\frac{1}{T} \int_0^T  f(x+\omega t)\;dt - \int_{\T^n} f(y)\;dy \right| 
            \leq 
            \frac{C(\sigma,p)}{C_\omega}
            \Vert f\Vert_{{B^\sigma_{p,1}}(\T^n)} \left( \frac{1}{T}\right),              && s\geq\sigma.
            \label{eq:BesovALemmaP1}
	 \end{aligned}
     \end{equation}
     
     \item[(ii)] If $s>\frac{n}{p}$ and $f\in B^s_{p,q}(\T^n)$ for $1\leq q\leq \infty$, then
     \begin{equation}\label{eq:BesovALemmaPQ}
     \begin{aligned}
      &\quad 
            \left|\frac{1}{T} \int_0^T  f(x+\omega t)\;dt - \int_{\T^n} f(y)\;dy \right| 
            \leq 
            \frac{C(\sigma,s,p,q)}{C_\omega}
            \Vert f\Vert_{{B^s_{p,q}}(\T^n)} \left( \frac{1}{T}\right),  && s>\sigma.
     \end{aligned}
     \end{equation}
     \end{itemize}
\end{prop}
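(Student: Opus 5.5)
We reduce to $x=0$ and $\int_{\T^n}f=0$, so $\widehat f(0)=0$ and $\Delta_{-1}f$ has no contribution beyond the zero mode. (Note $\varphi_{-1}$ is supported in $[0,4/3]$, so it may also see $|\xi|=1$; those finitely many modes are handled like the shell $j=0$.) Since $s\ge n/p$ with $q=1$, or $s>n/p$, the embedding chain \eqref{eq:besov2} gives $f\in B^0_{\infty,1}(\T^n)$. Hence by Lemma~\ref{lem:BesovHolderEmbed}(c) we have $f=\sum_j\Delta_jf$ uniformly, and we may integrate term by term:
\begin{equation*}
\frac1T\int_0^T f(\omega t)\,dt=\sum_{j\ge -1}\frac1T\int_0^T\Delta_jf(\omega t)\,dt .
\end{equation*}
On each shell, $\Delta_jf$ is a trigonometric polynomial with frequencies $\xi\in\mathcal A_j$, all with $\xi\cdot\omega\neq0$. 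Integrating exactly gives
\begin{equation*}
\frac1T\int_0^T \Delta_jf(\omega t)\,dt=\frac1T\sum_{\xi\in\mathcal A_j}\varphi_j(\xi)\widehat f(\xi)\,\frac{e^{2\pi i\xi\cdot\omega T}-1}{2\pi i\,\xi\cdot\omega},
\end{equation*}
so its modulus is at most $\frac{1}{\pi T}\sum_{\xi\in\mathcal A_j}|\widehat{\Delta_jf}(\xi)|/|\xi\cdot\omega|$.

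Next we apply H\"older's inequality with the conjugate pair $(p',p)$, where $p'=p/(p-1)\in[2,\infty)$:
\begin{equation*}
\sum_{\xi\in\mathcal A_j}\frac{|\widehat{\Delta_jf}(\xi)|}{|\xi\cdot\omega|}\le \Big(\sum_{\xi\in\mathcal A_j}|\xi\cdot\omega|^{-p}\Big)^{1/p}\,\|\widehat{\Delta_jf}\|_{\ell^{p'}}.
\end{equation*}
By Lemma~\ref{lem:keyThm1}, applicable since $p>1$, the first factor is at most $C_p\delta_j^{-1}=C_pC_\omega^{-1}(16/3)^\sigma 2^{j\sigma}$. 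By Hausdorff--Young on $\T^n$, valid because $p\in(1,2]$, the second factor is at most $\|\Delta_jf\|_{L^p}$. Combining these,
\begin{equation*}
\Big|\frac1T\int_0^T\Delta_jf(\omega t)\,dt\Big|\le \frac{C(\sigma,p)}{C_\omega T}\,2^{j\sigma}\|\Delta_jf\|_{L^p(\T^n)} .
\end{equation*}

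Summing over $j$ finishes both parts. For (i), the sum is $\sum_j 2^{j\sigma}\|\Delta_jf\|_{L^p}=\|f\|_{B^\sigma_{p,1}}\le C\|f\|_{B^s_{p,1}}$ when $s\ge\sigma$. For (ii), with $s>\sigma$, write $2^{j\sigma}\|\Delta_jf\|_{L^p}=2^{-j(s-\sigma)}\,2^{js}\|\Delta_jf\|_{L^p}$ and apply H\"older in $j$ with exponents $(q',q)$. The geometric factor $\big(\sum_j2^{-j(s-\sigma)q'}\big)^{1/q'}$ is finite and depends only on $s-\sigma$ and $q$, which yields $C(\sigma,s,p,q)\|f\|_{B^s_{p,q}}/(C_\omega T)$.

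The main obstacle is the per-shell estimate. A naive bound $|\xi\cdot\omega|^{-1}\lesssim 2^{j\sigma}$ on every mode would cost a factor $\#\mathcal A_j\sim2^{jn}$. The Diophantine separation of the small divisors in Lemma~\ref{lem:keyThm1} is what removes this loss: the $\ell^p$ sum of the reciprocals behaves like that of a single worst divisor. The only remaining technical point is the justification of term-by-term integration, which we have handled through the uniform convergence from Lemma~\ref{lem:BesovHolderEmbed}(c).
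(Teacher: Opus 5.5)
Your proposal is correct and follows the paper's proof essentially line by line. You reduce to $x=0$ and zero mean, integrate each block $\Delta_j f$ exactly along the orbit, and bound the result by H\"older with $(p',p)$, Hausdorff--Young and Lemma~\ref{lem:keyThm1}, giving $C(\sigma,p)\,C_\omega^{-1}2^{j\sigma}\|\Delta_j f\|_{L^p}/T$. You then absorb the finitely many modes of $\Delta_{-1}f$ into the constant and sum over $j$ (directly for $q=1$, via H\"older in $j$ when $s>\sigma$), with uniform convergence of $\sum_j\Delta_j f$ justifying the term-by-term integration.

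One small imprecision: for $q>1$, the embedding $B^s_{p,q}\hookrightarrow B^0_{\infty,1}$ does not come literally from \eqref{eq:besov2}. It needs a preliminary H\"older-in-$j$ step, e.g.\ $B^s_{p,q}\hookrightarrow B^{s-\varepsilon}_{p,1}$ with $s-\varepsilon\geq n/p$, exactly as in the remark preceding the paper's proof.
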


\begin{rmk}
Under the assumptions of Proposition \ref{prop:thm1part1}, $f\in C(\T^n)$ and its
Littlewood--Paley decomposition converges absolutely and uniformly $f=\sum_{j=-1}^{\infty}\Delta_jf$.
Indeed, 
\begin{itemize}
\item[(i)] If $s\geq n/p$ then $B^s_{p,1}(\T^n)
\hookrightarrow B^{s-n/p}_{\infty,1}(\T^n)
\hookrightarrow B^0_{\infty,1}(\T^n)
\hookrightarrow C(\T^n)$. 
\item[(ii)] For \(1\leq q\leq\infty\) and \(s>n/p\), Bernstein's inequality (Lemma \ref{lem:BernsteinPaley}) and
H\"older's inequality yield
$
B^s_{p,q}(\T^n)
\hookrightarrow B^0_{\infty,1}(\T^n)
\hookrightarrow C(\T^n)$. 
\end{itemize}
\end{rmk}

\begin{proof}[Proof of Proposition \ref{prop:thm1part1}]
We may assume without loss of generality that $x=0$ and $\int_{\T^n} f(y)\,dy=0$. 
We recall that $\Delta_j f(x) = \sum_{\xi\in \mathcal{A}_j} \widehat{\Delta_j f}(\xi) e^{2\pi i \xi \cdot x}$ for $x\in \T^n$. 
Therefore
\begin{align*}
    \left| \int_0^T \Delta_j f(\omega t)\;dt \right| 
    = 
    \left| 
    \sum_{\xi\in \mathcal{A}_j} \widehat{\Delta_j f}(\xi) \frac{e^{2\pi i \xi \cdot \omega T}-1}{2\pi i \xi\cdot \omega}
    \right| 
    \leq 
    \frac{1}{\pi} \sum_{\xi\in \mathcal{A}_j} |\widehat{\Delta_j f}(\xi)| \cdot \frac{1}{|\xi\cdot\omega|}.
\end{align*}
Using H\"older's inequality with conjugate exponents $p\in (1,2]$ and $p'\in [2,\infty)$, along with the Hausdorff--Young inequality and Lemma \ref{lem:keyThm1}, we deduce that
\begin{equation}\label{eq:keyEstimateA}
\begin{aligned}
    & \frac{1}{\pi}\sum_{\xi\in \mathcal{A}_j} |\widehat{\Delta_j f}(\xi)|\cdot \frac{1}{|\xi\cdot \omega|} 
    \leq 
        \frac{1}{\pi } \left(\sum_{\xi\in \mathcal{A}_j} |\widehat{\Delta_j f}(\xi)|^{p'} \right)^{1/p'}  \left( \sum_{\xi\in \mathcal{A}_j} \frac{1}{|\xi\cdot\omega|^p} \right)^{1/p}\\
    &\qquad \qquad 
        \leq
        \frac{1}{\pi} \Vert \widehat{\Delta_j f} \Vert_{\ell^{p'}(\Z^n)} \frac{C_p}{\delta_j} \leq \frac{C_p}{\pi } \Vert \Delta_j f\Vert_{L^p(\T^n)}  \frac{1}{\delta_j}   
    = \frac{C_p}{\pi C_\omega} \left(\frac{16}{3}\right)^\sigma  2^{j\sigma} \Vert \Delta_j f\Vert_{L^p(\T^n)}. 
\end{aligned}
\end{equation}
Since $\widehat f(0)=0$, the low-frequency block $\Delta_{-1}f$ contains only finitely many nonzero Fourier modes and can therefore be included in the preceding estimate by enlarging the constant.
As a consequence, since $f=\sum_{j=-1}^{\infty}\Delta_jf$ uniformly, we have
\begin{align}\label{eq:continuousest}
    \left|\int_0^T  f(\omega t)\;dt\right|
    &\leq 
    \sum_{j=-1}^\infty \left|\int_0^T \Delta_j f(\omega t)\;dt\right|
    \leq 
	\frac{C_p}{\pi C_\omega}\left(\frac{8}{3}\right)^\sigma \sum_{j=-1}^\infty 2^{j(\sigma - s)} \cdot 2^{js} \Vert \Delta_j f\Vert_{L^p(\T^n)}.
\end{align}

{\it (i).}  If $s\geq\sigma$, by definition of $B^{\sigma}_{p,1}(\T^n)$ we have $	\sum_{j=-1}^\infty  2^{j\sigma} \Vert \Delta_j f\Vert_{L^p(\T^n)} 	= 
	\Vert f\Vert_{B^\sigma_{p,1}(\T^n)}\leq \Vert f\Vert_{B^s_{p,1}(\T^n)}$. 

{\it (ii).} If $s>\sigma$, by H\"older inequality with $1\leq q\leq \infty$ and $\frac{1}{q'} + \frac{1}{q} = 1$ we have:
\begin{align*}
	\sum_{j=-1}^\infty  2^{j\sigma} \Vert \Delta_j f\Vert_{L^p(\T^n)} 
	\leq 
	2^{s-\sigma} \left(1-\frac{1}{2^{(s-\sigma)q'}}\right)^{-1/q'} \Vert f\Vert_{B^s_{p,q}(\T^n)}.
\end{align*}
From these facts and \eqref{eq:continuousest} we deduce the conclusions \eqref{eq:BesovALemmaP1}, \eqref{eq:BesovALemmaPQ}. 
\end{proof}

\begin{prop}\label{prop:thm1part2}
Let $n\geq 2$, $\omega\in \mathcal{D}(\sigma, C_\omega,n)$, and $x\in \T^n$. 
     If $f\in B^s_{\infty,q}(\T^n)$ for $s>0$ and $1\leq q\leq \infty$ then
        \begin{align}\label{eq:BesovBLem}
            &
            \left|\frac{1}{T}\int_0^T  f(x+\omega t)\;dt - \int_{\T^n} f(y)\;dy \right| 
            \leq 
            \frac{C(\sigma,s,q)}{C_\omega}\Vert f\Vert_{{B^s_{\infty,q}}(\T^n)} 
            \begin{cases}
                \begin{aligned}
                    & T^{-1}  && s > \sigma,  \\
                    & T^{-1}(\log T)^{1-1/q}  && s = \sigma,  \\
                    & T^{-\frac{s}{\sigma}}  && s < \sigma. 
                \end{aligned}
            \end{cases}
        \end{align}
\end{prop}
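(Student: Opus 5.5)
Assume, as before, $x=0$ and $\int_{\T^n}f=0$. Write $f=\sum_{j\geq -1}\Delta_j f$, which converges uniformly by Lemma~\ref{lem:BesovHolderEmbed}(c) since $B^s_{\infty,q}\hookrightarrow B^s_{\infty,\infty}$ with $s>0$. For each block we use two competing bounds.

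The first is the trivial bound
\[
\Bigl|\int_0^T \Delta_j f(\omega t)\,dt\Bigr|\leq T\,\|\Delta_j f\|_{L^\infty}.
\]
The second is a small-divisor bound of the form
\[
\Bigl|\int_0^T \Delta_j f(\omega t)\,dt\Bigr|\leq \frac{C}{C_\omega}2^{j\sigma}\|\Delta_j f\|_{L^\infty}.
\]
The second bound is the heart of the matter. The naive route $\sum_{\xi\in\mathcal A_j}|\widehat{\Delta_j f}(\xi)|/|\xi\cdot\omega|$ would cost a factor of $\#\mathcal A_j$, so I would avoid it. Instead, since the spectrum of $\Delta_j f$ lies in $\{|\xi|\le \tfrac83 2^j\}$, I would construct the primitive along the flow directly. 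Take $g_j$ with $\widehat{g_j}(\xi)=\widehat{\Delta_j f}(\xi)/(2\pi i\,\xi\cdot\omega)$, so that $\int_0^T\Delta_j f(\omega t)\,dt=g_j(\omega T)-g_j(0)$, and it then suffices to show $\|g_j\|_{L^\infty}\lesssim C_\omega^{-1}2^{j\sigma}\|\Delta_j f\|_{L^\infty}$.

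To get this, I would use the separation structure from Lemma~\ref{lem:keyThm1}: the values $\xi\cdot\omega$ with $\xi\in\widetilde{\mathcal A}_j$ (a slightly enlarged annulus) are $\delta_j$-separated and bounded away from $0$ by $\delta_j$. Equivalently, I would write
\[
g_j=\int_0^\infty \bigl(\text{smoothed }\Delta_j f\bigr)(\cdot-\omega t)\,e^{-\delta_j t}\,dt
\]
plus a correction. The cleanest device is a time-average representation: for $\tau=c/\delta_j$,
\[
\Delta_j f = \frac1\tau\int_0^\tau\Delta_j f(\cdot+\omega t)\,dt+\text{(coboundary)},
\]
and the time average over $[0,\tau]$ acts on each mode by $m(\tau\xi\cdot\omega)$ with $m(z)=(e^{2\pi i z}-1)/(2\pi i z)$. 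I would replace the sharp average by a smooth kernel $\rho_\tau$ in time whose Fourier transform vanishes on $\{|z|\ge \delta_j\}$ exactly up to a Schwartz tail. Then $\int\rho_\tau(t)\Delta_jf(\cdot+\omega t)\,dt$ kills every mode, because $|\xi\cdot\omega|\ge\delta_j$, while $\|\rho_\tau\|_{L^1}\lesssim1$ and the coboundary solution has $L^\infty$ norm $\lesssim \tau\|\Delta_j f\|_\infty$. Concretely, choose $\rho\in\mathcal S(\R)$ with $\int\rho=1$ and $\hat\rho$ supported in $[-1,1]$, and set $\rho_\tau(t)=\tau^{-1}\rho(t/\tau)$. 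Then $\rho_\tau*_\omega \Delta_j f=0$, and
\[
\Delta_j f-\rho_\tau*_\omega\Delta_j f=\frac{d}{dt}\Big|_{\text{flow}}h,\qquad h=\int \Bigl(\int_{-\infty}^t\bigl(\delta_0-\rho_\tau\bigr)\Bigr)\Delta_j f(\cdot+\omega t)\,dt,
\]
with $\|h\|_\infty\le \|\Delta_jf\|_\infty\int|t||\rho_\tau(t)|\,dt\lesssim\tau\|\Delta_jf\|_\infty$. This gives the second bound with $\tau\sim 1/\delta_j\sim C_\omega^{-1}2^{j\sigma}$. The step I expect to be the main obstacle is verifying this representation carefully; notably, it needs only $|\xi\cdot\omega|\ge\delta_j$ and not the full separation, and the low block $\Delta_{-1}$ is handled identically.

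With both bounds in hand, sum over $j$ using $\min\bigl(T,\,C_\omega^{-1}2^{j\sigma}\bigr)\,2^{-js}a_j$, where $a_j=2^{js}\|\Delta_jf\|_\infty\in\ell^q$. Let $J$ satisfy $2^{J\sigma}\approx C_\omega T$. For $s>\sigma$, using the second bound for all $j$ and Hölder in $j$ gives $O(\|f\|/C_\omega)$. For $s=\sigma$, the terms with $j\le J$ give $C_\omega^{-1}\sum_{j\le J}a_j\le C_\omega^{-1}J^{1-1/q}\|a\|_{\ell^q}$, and the tail $j>J$ gives $T\sum_{j>J}2^{-j\sigma}a_j\lesssim T\,2^{-J\sigma}\|a\|_\infty\lesssim C_\omega^{-1}$; since $J\sim\log T$, this yields $T^{-1}(\log T)^{1-1/q}$. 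For $s<\sigma$, the low part gives $C_\omega^{-1}2^{J(\sigma-s)}\|a\|_\infty$ and the tail gives $T2^{-Js}\|a\|_\infty$, both $\lesssim T^{1-s/\sigma}$ up to powers of $C_\omega$, which yields $T^{-s/\sigma}$ after dividing by $T$. Finally, I would track the $C_\omega$ dependence to obtain the stated prefactor $C_\omega^{-1}$, adjusting the choice of $J$ if necessary.
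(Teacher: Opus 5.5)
Your proof is correct, but the key block estimate comes from a different mechanism than the paper's.

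The paper takes exactly the route you set aside as naive, and it loses no factor of $\#\mathcal A_j$. By Cauchy--Schwarz and Parseval, $\sum_{\xi\in\mathcal A_j}|\widehat{\Delta_j f}(\xi)|\,|\xi\cdot\omega|^{-1}\le\|\Delta_j f\|_{L^2}\bigl(\sum_{\xi\in\mathcal A_j}|\xi\cdot\omega|^{-2}\bigr)^{1/2}$. Lemma~\ref{lem:keyThm1} applies the Diophantine condition to differences $\xi_1-\xi_2$ and shows that the values $\xi\cdot\omega$ are $\delta_j$-separated and at distance at least $\delta_j$ from $0$. Hence the $m$-th positive (or negative) value has modulus at least $m\delta_j$, and the last sum is at most $\tfrac{\pi^2}{3}\delta_j^{-2}$. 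Since $\|\Delta_j f\|_{L^2}\le\|\Delta_j f\|_{L^\infty}$, this is your second bound, and the dyadic splitting afterwards is the same as yours.

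Your substitute is a Bohr-type inequality along the orbit. Take $\hat\rho(0)=1$, $\mathrm{supp}\,\hat\rho\subset[-1,1]$ and $\tau\approx C_\omega^{-1}(\tfrac83\,2^j)^\sigma$. Then the average $\rho_\tau*_\omega\Delta_j f$ vanishes identically, and your $h$, which coincides with $-g_j$, satisfies $\|h\|_{L^\infty}\le\tau\|\Delta_j f\|_{L^\infty}\int_{\R}|u|\,|\rho(u)|\,du$. This checks out, including $\Delta_{-1}f$ once the mean is removed.

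The two routes buy different things:
\begin{itemize}
\item Your route needs only the lower bound on $|\xi\cdot\omega|$ over the spectrum of each block, not separation. So it applies verbatim to any bounded function whose frequencies along the orbit avoid $(-1/\tau,1/\tau)$, and it gives an explicit bounded blockwise solution of the cohomological equation.
\item The paper's route bounds the pointwise Birkhoff integral by $\|\Delta_j f\|_{L^2}$, and by $\|\Delta_j f\|_{L^p}$ for $1<p\le 2$ via Hausdorff--Young. This is what gives part (i) of Theorem~\ref{thm:BesovHolderCounting}. Starting from your $L^\infty$ estimate, you would need Bernstein's inequality there and would lose $2^{jn/p}$.
\end{itemize}

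One caution on the constant. Your bookkeeping yields $(C_\omega T)^{-s/\sigma}$ for $s<\sigma$, which is dominated by the stated $C_\omega^{-1}T^{-s/\sigma}$ only when $C_\omega\lesssim 1$. No choice of $J$ fixes this. The map $\omega\mapsto\lambda\omega$ multiplies $C_\omega$ by $\lambda$ and only rescales time, so a bound $C_\omega^{-1}T^{-s/\sigma}$ uniform in $\omega$ would force the rate $T^{-1}$, contradicting Theorem~\ref{thm2:examples}(iii). The paper's proof relies on the same tacit normalization: its high-frequency term $2^{-(N+1)s}$ carries no factor $C_\omega^{-1}$. So this is not a weakness of your argument relative to the paper's.
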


\begin{proof}
We may assume without loss of generality that $x=0$ and $\int_{\T^n} f(y)\,dy=0$. Using $\Delta_j f(x) = \sum_{\xi\in \mathcal{A}_j} \widehat{\Delta_j f}(\xi) e^{2\pi i \xi \cdot x}$ and arguing as in \eqref{eq:keyEstimateA} with $p=2$, for $j\geq -1$ we have
\begin{equation}\label{eq:keyEstimateB}
\begin{aligned}
    \left| \int_0^T \Delta_j f(\omega t)\;dt \right| 
    &\leq 
    \frac{1}{\pi} \sum_{\xi\in \mathcal{A}_j} |\widehat{\Delta_j f}(\xi)| \cdot \frac{1}{|\xi\cdot\omega|} 
    \leq 
    \frac{1}{\pi } \left(\sum_{\xi\in \mathcal{A}_j} |\widehat{\Delta_j f}(\xi)|^{2} \right)^{1/2}  \left( \sum_{\xi\in \mathcal{A}_j} \frac{1}{|\xi\cdot\omega|^2} \right)^{1/2}
    \\
    &\leq
        \frac{C_2}{\pi} \Vert \widehat{\Delta_j f} \Vert_{\ell^{2}(\Z^n)} \frac{1}{\delta_j} \leq \frac{1}{C_\omega    \sqrt{3}}\left(\frac{8}{3}\right)^\sigma  2^{j\sigma} \Vert \Delta_j f\Vert_{L^2(\T^n)}.     
\end{aligned}
\end{equation}
Here we use H\"older's inequality with $p=2$, along with the Hausdorff--Young inequality, and Lemma \ref{lem:keyThm1} with $C_2 = \frac{\pi}{\sqrt{3}}$. 
In the following we let $1\leq q\leq \infty$ and $\frac{1}{q} + \frac{1}{q'} = 1$. 
\medskip

    \underline{\textit{Case 1:}} $s> \sigma$. By H\"older inequality we have
\begin{align*}
    \sum_{j=-1}^\infty 2^{j\sigma} \Vert \Delta_j f\Vert_{L^2(\T^n)}
        &\leq 
    \left(\sum_{j=-1}^\infty 2^{j(\sigma - s)q'} \right)^{1/q'} \left( \sum_{j=-1}^\infty \left( 2^{js} \Vert \Delta_j f\Vert_{L^2(\T^n)}\right)^q \right)^{1/q}\\
        &= 
    \left(\frac{2^{-(\sigma-s)q'}}{1-2^{(\sigma-s)q'}} \right)^{1/q'} \Vert f\Vert_{B^s_{2,q}(\T^n)} \leq C(\sigma-s,q) \cdot \Vert f\Vert_{B^s_{\infty,q}(\T^n)}.
\end{align*}
We deduce that 
\begin{align*}
    \left|\frac{1}{T}\int_0^T f(\omega t)\;dt\right| 
    \leq 
    \frac{C(\sigma, \sigma-s, q)}{C_\omega} \Vert f\Vert_{B^s_{\infty,q}(\T^n)}\cdot \frac{1}{T}. 
\end{align*}
\medskip

    \underline{\textit{Case 2:}} $s\leq \sigma$. Fix $N\in \mathbb{N}$. By H\"older inequality we have
\begin{align*}
    \sum_{j=-1}^N 2^{j\sigma} \Vert \Delta_j f\Vert_{L^2(\T^n)}
    &    \leq 
    \left(\sum_{j=-1}^N 2^{j(\sigma - s)q'} \right)^{1/q'} 
    \left( \sum_{j=-1}^N \left( 2^{js} \Vert \Delta_j f\Vert_{L^2(\T^n)}\right)^q \right)^{1/q}\\
    &\qquad = 
    \left(\frac{2^{(N+1)(\sigma-s)q'} - 2^{-(\sigma-s)q'}}{2^{(\sigma-s)q'}-1} \right)^{1/q'} \Vert f\Vert_{B^s_{2,q}(\T^n)} \\
    &\qquad \leq \Vert f\Vert_{B^s_{\infty,q}(\T^n)}.
    \begin{cases}
    \begin{aligned}
        &C(\sigma-s,q) \cdot 2^{(N+1)(\sigma-s)}  &&\quad \text{if}\;  s<\sigma, \\
        &(N+2)^{1/q'}  &&\quad \text{if}\;  s=\sigma.
    \end{aligned}    
    \end{cases}
\end{align*}
For $j>N$, also by H\"older inequality we have
\begin{align*}
    & \sum_{j>N} \frac{1}{T}\left|\int_0^T \Delta_j f(\omega t)\;dt\right| 
    \leq 
    \sum_{j > N} \Vert \Delta_j f\Vert_{L^\infty(\T^n)} 
     \leq 
     \sum_{j > N} 2^{-js} \cdot 2^{js} \Vert \Delta_j f\Vert_{L^\infty(\T^n)}  \\
    &\qquad  \leq 
    \left(\sum_{j>N} 2^{-jsq'}\right)^{1/q'} 
    \left(\sum_{j>N} \big(2^{js}\Vert \Delta_j f \Vert_{L^\infty(\T^n)}\big)^q \right)^{1/q}
    \leq 
    \frac{2^{-(N+1)s}}{(1-2^{-sq'})^{1/q'}}
    \Vert f\Vert_{B^s_{\infty,q}(\T^n)}. 
\end{align*}
If $s<\sigma$, we have 
\begin{align*}
      \left|\frac{1}{T}\int_0^T  f(\omega t)\;dt\right| 
      & \leq 
      \frac{C(s,\sigma,q)}{C_\omega}
      \left(\frac{2^{(N+1)(\sigma-s)}}{T} + 2^{-(N+1)s}\right)  \Vert f\Vert_{B^s_{\infty,q}(\T^n)}\\
      & \leq 
      \frac{C(s,\sigma,q)}{C_\omega} \Vert f\Vert_{B^s_{\infty,q}(\T^n)} \left(\frac{1}{T}\right)^\frac{s}{\sigma} \qquad\text{by choosing}\quad N = \left\lfloor\frac{\log_2(T)}{\sigma} \right\rfloor.
\end{align*}
If $s=\sigma$, we have 
\begin{align*}
      \left|\frac{1}{T}\int_0^T  f(\omega t)\;dt\right| 
      & \leq 
      \frac{C(s,\sigma,q)}{C_\omega}
      \left(\frac{(N+2)^{1/q'}}{T} + 2^{-(N+1)s}\right)  \Vert f\Vert_{B^s_{\infty,q}(\T^n)}\\
      & \leq 
      \frac{C(s,\sigma,q)}{C_\omega} \Vert f\Vert_{B^s_{\infty,q}(\T^n)} \frac{(\log T)^{1/q'}}{T} \qquad\text{by choosing}\quad N = \left\lfloor\frac{\log_2(T)}{\sigma} \right\rfloor.
\end{align*}
The proof is complete.
\end{proof}

\begin{proof}[Proof of Theorem \ref{thm:BesovHolderCounting}] The proof of (i) and (ii) follows from Propositions \ref{prop:thm1part1} and \ref{prop:thm1part2}, respectively. 
For (iii), the proof of \eqref{eq:corHolder} is straight forward from \eqref{eq:BesovB} of Theorem \ref{thm:BesovHolderCounting}, thanks to the fact that $C^{[s],\alpha}(\T^n) = B^s_{\infty,\infty}(\T^n)$ where $s = [s]+\alpha, \alpha\in ( 0,1)$. For $\alpha=1$, we know that $C^{k,1}(\T^n)\hookrightarrow B_{\infty,\infty}^{k+1}(\T^n)$ for all $k\in\Z$. Moreover, for any $f\in C^{k,1}(\T^n)$, there holds $\|f\|_{B^{k+1}_{\infty,\infty}(\T^n)}\leq C\|f\|_{C^{k,1}(\T^n)}$ for some constant $C:=C(n,f)>0$. So we are able to enlarge the term $\|f\|_{B^{s}_{\infty,q}(\T^n)}$ of \eqref{eq:BesovB} to $\|f\|_{C^{k,1}(\T^n)}$ for the special case $s=k+\alpha$ and $\alpha=1$ (as an implicit requirement, $q=\infty$).
\end{proof}

\subsection{Proof of Theorem \ref{thm2:examples}: Examples for the sharp rates}
We split Theorem \ref{thm2:examples} into three Propositions \ref{prop:Thm12ii}, \ref{lem:Theorem1sharpRate2D}, \ref{prop:Thm12Casesalpha} and \ref{prop:Thm12iii} corresponding to parts (i), (ii), (iii), and (iv) respectively.

\begin{prop}\label{prop:Thm12ii}
For almost every $\om\in\R^n$, there exists a function $f\in B_{\infty,\infty}^{k+\alpha}(\T^n)$ with $k+\alpha>0$ ($k\in\N,\ \alpha\in(0,1]$) such that
        \begin{align}\label{eq:besovAexampleLem}
            \left|
                \frac{1}{T_j}\int_0^{T_j} f(\omega t)\;dt
                -
                \int_{\T^n} f(x)\;dx
            \right|
            \geq C(\omega)\cdot \frac{1}{T_j}
        \end{align}
        for a sequence $T_j\to \infty$ as $j\to \infty$. The supercritical rate $\mathcal{O}(T^{-1})$ in \eqref{eq:corHolder} is therefore optimal.
\end{prop}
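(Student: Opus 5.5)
We propose to take the simplest possible observable, a single trigonometric mode. Fix any nonzero $k\in\Z^n$ (say $k=e_1$), and let $f(x)=\cos(2\pi k\cdot x)$. This $f$ is a trigonometric polynomial, so only finitely many Littlewood--Paley blocks $\Delta_j f$ are nonzero. Hence $f\in B^{s}_{\infty,\infty}(\T^n)$ for every $s>0$, and in particular for $s=k+\alpha$. Its mean is $\int_{\T^n}f=0$. The only condition on $\omega$ we need is $k\cdot\omega\neq 0$, which holds for almost every $\omega\in\R^n$ (indeed for every non-resonant $\omega$, hence for every $\omega\in\mathcal{D}(\sigma,C_\omega,n)$).

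We then compute the Birkhoff average explicitly. With $\lambda:=2\pi k\cdot\omega\neq 0$,
\begin{equation*}
\frac{1}{T}\int_0^T \cos(\lambda t)\,dt=\frac{\sin(\lambda T)}{\lambda T}.
\end{equation*}
We choose the sequence $T_j=\frac{\pi}{2|\lambda|}+\frac{2\pi j}{|\lambda|}$, which tends to infinity. Along it $|\sin(\lambda T_j)|=1$, so the left-hand side of \eqref{eq:besovAexampleLem} equals exactly $\frac{1}{|\lambda|T_j}$. This gives \eqref{eq:besovAexampleLem} with $C(\omega)=\frac{1}{2\pi|k\cdot\omega|}$.

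The only point requiring a little care is the membership claim $f\in B^{k+\alpha}_{\infty,\infty}$. The Fourier support $\{\pm k\}$ meets only the annuli $A_j$ with $2^j\sim|k|$. Each $\Delta_j f$ is then a finite sum bounded in $L^\infty$ by $1$, so $\sup_j 2^{js}\|\Delta_j f\|_{L^\infty}<\infty$ for every $s$. Alternatively, $f\in C^\infty$ lies in every $C^{[s],\alpha}=B^s_{\infty,\infty}$ by Lemma \ref{lem:BesovHolderEmbed}.

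Finally, the statement says the supercritical rate is optimal. This follows because the upper bound in Theorem \ref{thm:BesovHolderCounting} gives $O(T^{-1})$ for this $f$, and the lower bound along $T_j$ matches it. No obstacle is expected: the construction is essentially a Gottschalk--Hedlund type observation that a nonconstant mode cannot have Birkhoff averages decaying faster than $1/T$.
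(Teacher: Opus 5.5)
Your proof is correct, and it takes a genuinely different and much simpler route than the paper.

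The paper builds a non-smooth observable as a coboundary, $f=\nabla_\omega\psi$ with the lacunary series $\psi(x)=\sum_{j\ge1}2^{-j(1+\alpha)}\cos(2\pi 2^j x_1)$. It checks by hand that $f\in C^{0,\alpha}$ (log-Lipschitz when $\alpha=1$) and reaches higher $k$ by integrating $k$ times. It then gets the lower bound from Lemma~\ref{lem:co-boundary}, which uses density of the orbit to make $\psi(\rho^T_\omega(x))-\psi(x)$ comparable to $\mathrm{osc}(\psi)$.

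Your single mode $\cos(2\pi e_1\cdot x)$ is itself a smooth coboundary, with $\psi=\sin(2\pi x_1)/(2\pi\omega_1)$, so the underlying mechanism is the same. Your explicit formula buys several things:
\begin{itemize}
\item you get the exact value $1/(2\pi|\omega_1|T_j)$ along explicit times;
\item you get it directly at the starting point $x=0$, whereas Lemma~\ref{lem:co-boundary} as stated only controls the supremum over starting points;
\item you need only $\omega_1\neq0$ (indeed any $\omega\neq0$, after picking a nonzero coordinate);
\item one $f$ lies in $B^s_{\infty,\infty}$ for every $s>0$ at once, which is exactly the formulation of Theorem~\ref{thm2:examples}(i).
\end{itemize}

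What the paper's construction adds is an example whose regularity is genuinely at the level $k+\alpha$ rather than $C^\infty$, so the $T^{-1}$ floor is shown to be attained by rough coboundaries too. The statement does not require this: optimality of an upper bound over a class only needs one member of the class to attain it.

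Two cosmetic points. First, rename your lattice vector, since $k$ already denotes the integer part in $k+\alpha$. Second, Lemma~\ref{lem:BesovHolderEmbed}(a) covers only non-integer $s$. For the membership $f\in B^s_{\infty,\infty}$, rely on your direct Littlewood--Paley argument: only finitely many blocks are nonzero, and each is bounded by $1$.
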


\begin{defn}[Rotational flow and co-boundary] \quad 
\begin{itemize}
\item[(i)] The rotational flow $\rho_\om^t:\T^n\rightarrow \T^n$ is defined by $\rho_\om^t(x)=x+t\omega \;({\rm mod}\ \Z^n)$
with $
\omega=(\omega_1,\dots,\omega_{n})\in\R^n$ non-resonant. 
\item[(ii)] Any function $f\in C(\T^n)$ could be called a {\bf co-boundary} with respect to $\rho_\om^t$ if there exists $\psi\in C^1(\T^n)$ satisfying the {\bf cohomological equation}
\begin{equation*}
		f(x)-\int_{\T^n}f\,dx	=	\nabla_\om \psi(x).
\end{equation*}
\end{itemize}
\end{defn}

\begin{lem}\label{lem:co-boundary}
	Let $f\in C(\T^n)$  admit a co-boundary $\psi\in C^1(\T^n)$. 
	Then there exists a sequence $T_k\to\infty$ such that
	\begin{align*}
		\frac{\operatorname{osc}(\psi)}{2T_k}
	\leq
	\sup_{x\in\T^n}
	\left|
	\frac1{T_k}
	\int_0^{T_k}f(\rho^t_\omega(x))\,dt
	-
	\int_{\T^n}f(y)\,dy
	\right|
	\leq
	\frac{\operatorname{osc}(\psi)}{T_k},	
	\end{align*}
	where $\operatorname{osc}(\psi)=\max_{\T^n}\psi-\min_{\T^n}\psi$. 
\end{lem}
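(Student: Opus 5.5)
The plan is to integrate the cohomological equation along orbits. Normalize so that $\int_{\T^n} f\,dy=0$; then $f=\nabla_\omega\psi$, where $\nabla_\omega\psi=\omega\cdot\nabla\psi$ is the derivative along the flow. Since $\psi\in C^1(\T^n)$, for every $x$ we have $\frac{d}{dt}\psi(\rho^t_\omega(x))=f(\rho^t_\omega(x))$, and hence
\begin{equation*}
\frac1T\int_0^T f(\rho^t_\omega(x))\,dt=\frac{\psi(\rho^T_\omega(x))-\psi(x)}{T}.
\end{equation*}
The upper bound follows at once for every $T>0$, and in particular along any sequence $T_k$: the numerator is a difference of two values of $\psi$, so its absolute value is at most $\operatorname{osc}(\psi)$.

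For the lower bound, choose points $x_{\max},x_{\min}\in\T^n$ at which $\psi$ attains its maximum and minimum; both exist because $\T^n$ is compact and $\psi$ is continuous. By non-resonance the flow is minimal, and we use recurrence. The translation $\rho^T_\omega$ moves every point by the same vector $T\omega \pmod{\Z^n}$. The orbit $\{T\omega\}$ is dense in $\T^n$, so we can pick $T_k\to\infty$ with $T_k\omega \to x_{\min}-x_{\max} \pmod{\Z^n}$. We may take $T_k$ increasing and unbounded, since density of the forward orbit $\{t\omega: t\ge k\}$ holds for every $k$.

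Now take $x=x_{\max}$. Then $\rho^{T_k}_\omega(x_{\max})\to x_{\min}$, and continuity of $\psi$ gives $\psi(\rho^{T_k}_\omega(x_{\max}))-\psi(x_{\max})\to-\operatorname{osc}(\psi)$. Hence for $k$ large the absolute value of the numerator is at least $\operatorname{osc}(\psi)/2$. Taking the supremum over $x$ gives the lower bound $\operatorname{osc}(\psi)/(2T_k)$.

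If $\operatorname{osc}(\psi)=0$ the statement is trivial. The only delicate point is producing the unbounded sequence $T_k$ from density of the forward orbit, which is exactly where non-resonance of $\omega$ is used.
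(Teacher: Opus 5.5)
Your proof is correct and follows the paper's argument: integrate the cohomological equation to get $\bigl(\psi(\rho^T_\omega(x))-\psi(x)\bigr)/T$, which gives the upper bound, then use density of orbits to make this endpoint difference close to $\operatorname{osc}(\psi)$ in absolute value. Your bookkeeping is tidier than the paper's. The paper follows the forward and backward orbit of an arbitrary point (times $S_k$ and $-T_k$), so it really averages over a window of length $S_k+T_k$ starting at $\rho^{-T_k}_\omega(x)$, and its denominator $2T_k$ should read $2(S_k+T_k)$; by starting at $x_{\max}$ with a single time $T_k$, your averaging time and denominator coincide.
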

\begin{proof}
For any $T>0$ we have
\begin{align*}
	\sup_{x\in\T^n} \left| \frac{1}{T}\int_0^T
	f(\rho_\om^t(x))
	\,dt-\int_{\T^n}f\,dx \right|
	&=
	\sup_{x\in\T^n} \left| \frac{\psi(\rho_\om^t(x))-\psi(x)}{T} \right| \\
	& \leq 
	\sup_{x\in\T^n}
	\left|
	\frac{1}{T}
	\int_0^T f(\Phi_t(x))\,dt
	-
	\int_{\T^n}f\,dx
	\right| 
	\leq
	\frac{\operatorname{osc}(\psi)}{T},\quad\forall\, T>0.
\end{align*}
		Recall that $\omega$ is non-resonant, so for any $x\in\T^n$, the orbit $\rho_\om^t(x)$ is always dense in $\T^n$. Suppose $\psi(x_+)=\max_{\T^n}\psi$ and $
	\psi(x_-)=\min_{\T^n}\psi$, 
		then there exists a sequence of pairs $(S_k,T_k)\to(+\infty,+\infty)$ as $k\rightarrow+\infty$ such that	
		\begin{equation*}
			\rho_\om^{S_k}(x)\to x_+,\quad \rho_\om^{-T_k}(x)\to x_-,\quad \text{as }k\rightarrow +\infty.
		\end{equation*}
	By continuity of $\psi$,
	\begin{align*}
	\sup_{x\in\T^n}
	\left|
	\frac1{S_k+T_k}
	\int_0^{S_k+T_k} f(\rho_\om^t(x))\,dt
	-
	\int_{\T^n}f\,dx
	\right|
	\geq
	\frac{\operatorname{osc}(\psi)}{2T_k}
	\end{align*}
	which completes the proof.
\end{proof}

\begin{proof}[Proof of Proposition \ref{prop:Thm12ii}]
We construct an example showing that the rate \(O(T^{-1})\) can occur for observables in \(B_{\infty,\infty}^{\alpha}(\mathbb T^n)\), \(0<\alpha \leq 1)\). We observe that the set of non-resonant $\omega\in \R^n$ has full measure, and if $\om\in\R^n$ is non-resonant then $\omega_i\neq 0$ for all $1\leq i\leq n$.
Define
\begin{equation}\label{eq:cobound-psi}
	\psi(x)=\sum_{j=1}^\infty 2^{-j(1+\alpha)}\cos(2\pi\cdot 2^j x_1),
\qquad x=(x_1,\cdots,x_n)\in\T^n,
\end{equation}
and 
\begin{equation*}
	f(x_1)=\nabla_\om\psi(x)
=
-2\pi\omega_1\sum_{j=1}^\infty 2^{-j \alpha }\sin(2\pi\cdot 2^j x_1).
\end{equation*}
We show that $f\in B_{\infty,\infty}^{\alpha}(\T^n)$ if $0<\alpha\leq 1$. Indeed, for any $h\in\R$, choose $j_0\in\N$ such that 
\begin{equation*}
\frac{2}{2^{j_0+1}}\leq 	2\pi |h| < \frac{2}{2^{j_0}}. 
\end{equation*}
We have $-1-\log_2(\pi|h|)\leq j_0 < -\log_2(\pi |h|)$. 
Since $|\sin(a+b)-\sin (a)|\leq \min\{2,|b|\}$, we have
\begin{align*}
|f(x_1+h)-f(x_1)|
	\leq
2\pi |\omega_1|\sum_{j=1}^\infty 2^{-\alpha j}\min\{2,2\pi\cdot 2^j|h|\} = I_1+I_2,
\end{align*}
where 
\begin{align*}
	I_1 &= 2\pi |\omega_1|\sum_{j=1}^{j_0} 2^{-\alpha j}\cdot 2\pi\cdot 2^j|h| = 4\pi^2 |\omega_1|\cdot |h| \sum_{j=1}^{j_0} 2^{j(1-\alpha)}
	\leq 
	4\pi^2 |\omega_1|
	\begin{cases}
	C(\alpha) |h|^\alpha &\quad \alpha \in (0,1)\\
	|h|\cdot |\log_2(\pi |h|)| &\quad \alpha =1,
	\end{cases} \\
	I_2 &= 2\pi |\omega_1|\sum_{j=j_0+1}^{\infty} 2^{-\alpha j}\cdot 2 = 4\pi |\omega_1| = \frac{4\pi|\omega_1|}{1-2^{-\alpha}}\cdot |h|^{\alpha},	
\end{align*}
where $C(\alpha)  = \frac{2^{1-\alpha}}{2^{1-\alpha}-1}$ for $\alpha \in (0,1)$.
Overall, we have 
\begin{equation*}
\begin{aligned}
	& f\in C^{0,\alpha}(\T^n)=B_{\infty,\infty}^\alpha(\T^n) &&\qquad  \alpha\in (0,1), \\
	& f(x+h) - f(x)| \leq C|h|\cdot|\log(h)|\; \quad x,h\in \R^n &&\qquad  \alpha=1. 
\end{aligned}
\end{equation*}
As a consequence, $f$ admits a $C^1$ co-boundary for all $0<\alpha\leq 1$.
More precisely,
\begin{equation*}
	\psi\in
	\begin{cases}
	C^{1,\alpha}(\mathbb{T}^n), &\qquad 0<\alpha<1, \\
	B_{\infty,\infty}^2(\mathbb{T}^n)\ \text{with }D\psi=f\ \in B_{\infty,\infty}^1(\T^n), &\qquad 		\alpha=1.
\end{cases} 
\end{equation*}
Due to Lemma~\ref{lem:co-boundary}, there exists a sequence $T_k\to\infty$ such that
\begin{align*}
	\frac{\operatorname{osc}(\psi)}{2T_k}
\leq
\sup_{x\in\T^n}
\left|
\frac{1}{T_k}
\int_0^{T_k} f(\rho_\om^t(x))\,dt
-
\int_{\T^n}f(y)\,dy
\right|
\leq
\frac{\operatorname{osc}(\psi)}{T_k}
\end{align*}
so we get \eqref{eq:besovAexampleLem} for $f\in B_{\infty,\infty}^{\alpha}(\T^n)$ with $\alpha\in (0,1]$.

For $k\geq 1$, any function $f(x_1)\in B_{\infty,\infty}^{k+\alpha}(\T)$ leads to $f^{(k)}(x_1)\in B_{\infty,\infty}^{\alpha}(\T)$, vice versa. From the previous arguments, we can construct a function $\psi^{(k)}(x_1)\in B_{\infty,\infty}^{1+\alpha}(\T)$ in the form of \eqref{eq:cobound-psi} such that we can corresponding get $f^{(k)}(x_1)\in B_{\infty,\infty}^\alpha(\T)$ as the coboundary, such that $\int_{\T^n}f^{(k)} dx=0$ and $f^{(k)}(x)=\nabla_{\om}\psi^{(k)}(x)$ for all $x\in\T^n$. By $k-$times integral of $f^{(k)}(x_1)$, we can get a function $f(x_1)\in B_{\infty,\infty}^{k+\alpha}(\T)$ which is the coboundary of the $k-$times integral of $\psi^{(k)}(x_1)$. Once again applying Lemma \ref{lem:co-boundary} we get the result \eqref{eq:besovAexampleLem} for $f\in B_{\infty,\infty}^{k+\alpha}(\T^n)$ with $k\in\Z_+$ and $\alpha\in (0,1]$.
\end{proof}

\begin{prop}\label{lem:Theorem1sharpRate2D}
    When $n=2$ and $\omega \in \mathcal{D}(1,C_\omega,2)$, there exists $f\in B^1_{\infty,\infty}(\T^2)$ and a sequence $T_N\to\infty$ as $N\to\infty$ such that 
    \begin{align*}
        \left|\frac{1}{T_N}\int_0^{T_N} f(\omega t)\,dt - \int_{\T^2} f(x)\;dx \right|
        \geq C\cdot\frac{\log(T_N)}{T_N}.
    \end{align*}
\end{prop}

\begin{proof}
Since $\omega\in \mathcal{D}(1,C_\omega,2)$, 
we can assume without loss of generality that $\omega = (1,\vartheta)$ where $\vartheta$ is a badly approximable irrational (see \cite{cassels_1957, schmidt_diophantine_1980, hu_polynomial_2025}). 
Let $(P_k,Q_k)$ be the Diophantine approximations for $\vartheta$ (see Subsection \ref{subsection:Diophantine}). We have
\begin{align*}
   \frac{C_\omega}{Q_k^2}	< \left|\vartheta - \frac{P_k}{Q_k}\right| < \frac{1}{Q_k^2}, \qquad k=1,2,\ldots. 
\end{align*}
Let $\delta_k = P_k-\vartheta Q_k$, then (\cite{schmidt_diophantine_1980,cassels_1957})
\begin{align}\label{eq:DiophantineFactsA}
    \delta_k > 0\;\text{if}\;k\;\text{is odd}, \qquad |\delta_{k-1}| > |\delta_k| , \qquad \frac{C_\omega}{Q_k} \leq |\delta_k| < \frac{1}{Q_k} \qquad\text{for}\;k=1,2,\ldots 
\end{align}
By standard comparison, we have
\begin{align}\label{eq:expgrowth}
    C_1\rho^k \leq Q_k \leq C_2\rho^k, \qquad k=1,2,\ldots,
\end{align}
where $\rho$ is constants depending on $\vartheta$. 
Since $P_k/Q_k\to \vartheta$ as $k\to \infty$, we can take $k_0>0$ large enough so that
\begin{equation}\label{eq:k0}
    \frac{1}{2}|\vartheta| \leq \left|\frac{P_k}{Q_k}\right| < \frac{3}{2}|\vartheta| \qquad\text{for}\;k\geq k_0.
\end{equation}
From \eqref{eq:k0}, let us define
\begin{align*}
	f(x) = \sum_{k\;\text{odd,}\,\geq k_0}\frac{1}{Q_{k}} \sin\Big(2\pi (x_1,x_2)\cdot (P_k,-Q_k)\Big), \qquad x = (x_1,x_2) \in \T^2. 
\end{align*}
Thanks to \eqref{eq:expgrowth}, the series $f$ converges absolutely and uniformly. We compute that 
\begin{align*}
   \int_{\T^2} f(x)\;dx  = 
   \sum_{k\;\text{odd,}\,\geq k_0} \int_{\T^2} \frac{1}{Q_k}\sin(2\pi(P_kx_1-Q_kx_2))\;dx_1dx_2
   =
   0.
\end{align*}

{\it \underline{Step 1}. Check that $f\in B^1_{\infty,\infty}(\T^2)$.} 
Let $\xi_k = (P_k,-Q_k)$. 
Recall that 
\begin{align*}
    \Delta_j f(x) = \sum_{\xi\in \mathcal{A}_j} \varphi_j(\xi) \widehat{f}(\xi)e^{2\pi i \xi \cdot x}, \qquad \mathcal{A}_j := \left[\frac{3}{4}\cdot 2^j, \frac{8}{3}\cdot 2^j\right]\cap \Z^n
\end{align*}
from \eqref{eq:annulusZ}. We observe that
\begin{align*}
    \pm \xi_k = \pm (P_k,-Q_k) 
    \qquad\Longleftrightarrow\qquad
    |\pm \xi_k| = \sqrt{P_k^2+Q_k^2} = Q_k \left( \left(\frac{P_k}{Q_k}\right)^2+1\right)^{1/2}.
\end{align*}
Therefore, $\pm \xi_k\in \mathcal{A}_j$ if and only if
\begin{align*}
    \frac{3}{4}\cdot 2^j 
    \leq 
    Q_k \left( \left(\frac{P_k}{Q_k}\right)^2+1\right)^{1/2}
    \leq \frac{8}{3}\cdot 2^j.
\end{align*}
We deduce that 
\begin{align*}
    \pm \xi_k \in \mathcal{A}_j 
    \qquad\Longrightarrow\qquad 
    \frac{3}{4}\cdot 2^j \left(\frac{2}{3|\vartheta|+1}\right)\leq Q_k \leq \frac{8}{3}\cdot 2^j \left(\frac{4}{|\vartheta|+4}\right).
\end{align*}
We obtain
\begin{align}\label{eq:BoundQk}
    \pm \xi_k \in \mathcal{A}_j 
    \qquad\Longrightarrow\qquad  
    \frac{2^j}{Q_k} \leq \frac{6|\vartheta|+1}{3}. 
\end{align}
If there are two distinct frequencies $\xi_m, \xi_j\in \mathcal{A}_j$ (not counting the sign) with $m>k$, then the ratio between their denominators satisfies
\begin{equation}\label{eq:Bound1}
    \frac{Q_m}{Q_k} \leq \frac{64}{9}\cdot \frac{3|\vartheta|+1}{|\vartheta|+4}.
\end{equation}
Since $m,k$ are odd, we have $m-k\geq 2$, and from the growth condition \eqref{eq:expgrowth} we have
\begin{equation}\label{eq:Bound2}
    \frac{C_1}{C_2}\rho^{m-k} \leq \frac{Q_m}{Q_k} . 
\end{equation}
From \eqref{eq:Bound1} and \eqref{eq:Bound2} we deduce that 
\begin{align}\label{eq:Bound3}
    m-k \leq C(\vartheta) = \frac{1}{\log\rho}\log\left(\frac{C_2}{C_1}\cdot\frac{64}{9}\cdot \frac{3|\vartheta|+1}{|\vartheta|+4}\right).
\end{align}
From \eqref{eq:Bound3}, the number of frequencies $\pm\xi_k$ lying in the annulus \(\mathcal{A}_j\) is uniformly bounded. Therefore, we can estimate
\begin{align*}
    2^{j}\Vert \Delta_j f\Vert_{L^\infty(\T^n)} \leq 2^j\sum_{\xi\in \mathcal{A}_j} |\widehat{f}(\xi)| 
    \leq 2^j \cdot 2\cdot C(\vartheta)\cdot \frac{1}{2Q_k}
    \leq C(\vartheta) \left(\frac{6|\vartheta|+1}{3}\right)
\end{align*}
thanks to \eqref{eq:BoundQk}. We conclude that $f\in B^1_{\infty,\infty}(\T^2)$. 
\medskip

{\it \underline{Step 2}. Verify the lower bound.} 
Recall that $\delta_{k}=P_k-\vartheta Q_k > 0$ if $k$ is odd, we have
\begin{align*}
    \mathcal{S}(T):=\frac{1}{T}\int_0^T f(\omega t)\;dt 
    & = \sum_{k\;\text{odd}} \frac{1}{TQ_{k}} \int_0^T \sin\big(2\pi \delta_{k}t\big)\;dt\\
    &=  \sum_{k\;\text{odd}} \frac{1}{TQ_{k}} \frac{1-\cos(2\pi\delta_k T)}{2\pi \delta_k } 
    \geq 
    \frac{1}{2\pi T}  \sum_{k\;\text{odd}} \Big(1-\cos(2\pi\delta_k T)\Big)
\end{align*}
thanks to $C_\omega < Q_k\delta_k <1$ from \eqref{eq:DiophantineFactsA}. Take an odd number $N\geq k_0$ suitably large and
\begin{equation*}
    R_N := \frac{1}{\pi\delta_N}. 
\end{equation*}
We compute
\begin{align*}
    \frac{1}{R_N}\int_0^{R_N} T\mathcal{S}(T)\;dT 
    & \geq 
    \frac{1}{2\pi}\sum_{k\;\text{odd}} \frac{1}{R_N}\int_0^{R_N} \big(1-\cos(2\pi \delta_k T)\big)\;dT 
    \geq \frac{1}{2\pi} \sum_{k\leq N,\; k\;\text{odd}}\left(1 - \frac{\sin(2\pi \delta_k R_N)}{2\pi \delta_k R_N} \right).
\end{align*}
We observe from \eqref{eq:DiophantineFactsA} that, for every odd integer $k\leq N$, we have
\begin{align*}
    2\pi \delta_k R_N \geq 2\pi \delta_N R_N = 2
    \qquad\Longrightarrow\qquad 
    \left|\frac{\sin(2\pi \delta_k R_N)}{2\pi \delta_k R}\right| \leq \frac{1}{2\pi \delta_k R} = \frac{1}{2}.
\end{align*}
Therefore
\begin{align*}
    \frac{1}{R_N}\int_0^{R_N} T\mathcal{S}(T)\;dT  \geq \frac{1}{2\pi} \sum_{k\leq N,\; k\;\text{odd}} \frac{1}{2} = \frac{1}{4\pi}\cdot \frac{N-1}{2} = \frac{N-1}{8\pi}. 
\end{align*}
This implies that there exists $T_N \in (0,R]$ such that
\begin{align}\label{eq:Tn}
    T_N \mathcal{S}(T_N) \geq \frac{N-1}{8\pi} 
    \qquad\Longrightarrow\qquad 
    \frac{1}{T_N}\int_0^{T_N} f(\omega t)\;dt  \geq \frac{N-1}{8\pi T_N}. 
\end{align}
From \eqref{eq:Tn} we must have $T_N\to \infty$ as $N\to \infty$. Indeed, if the contrary happens, then $0 < T_N \leq C$ for all $N$, and thus, up to a subsequence we can assume $T_N\to T^*$ as $N\to \infty$. 
\begin{itemize}
    \item If $T^* = 0$, then by the continuity at $0$ of $t\mapsto f(\omega t)$ we obtain
    \begin{align*}
        \lim_{T_N\to T^*} \frac{1}{T_N}\int_0^{T_N} f(\omega t)\;dt = f(0) \geq \lim_{N
    \to \infty}\frac{N-1}{8\pi T_N} = +\infty,
    \end{align*}
    which is a contradiction.
    \item If $T^* = T_0\in(0,\infty)$ is finite, then 
    \begin{align*}
        \lim_{T_N\to T_0} \frac{1}{T_N}\int_0^{T_N} f(\omega t)\;dt = \frac{1}{T_0}\int_0^{T_0} f(\omega t)\;dt \geq \lim_{N
    \to \infty}\frac{N-1}{8\pi T_N} = +\infty,
    \end{align*}
    which is also a contradiction.
\end{itemize}
Therefore, along a subsequence we have $\lim_{N\to \infty} T_N = \infty$. 
From \eqref{eq:DiophantineFactsA} and $R_N = (\pi \delta_N)^{-1}$ we have
\begin{align*}
    \frac{C_\omega}{Q_N} \leq \delta_N \leq \frac{1}{Q_N} 
    &\qquad\Longrightarrow\qquad 
    T_N \leq R_N = \frac{1}{\delta_N} \leq \frac{Q_N}{C_\omega} \leq \frac{C_2}{C_\omega}\rho^N \\
    &\qquad\Longrightarrow\qquad 
    N  \geq \frac{\log(C_\omega C_2^{-1}T_N)}{\log(\rho)}.
\end{align*}
From \eqref{eq:Tn} we obtain
\begin{align*}
    \frac{1}{T_N}\int_0^{T_N} f(\omega t)\;dt  
        &\geq 
    \frac{1}{8\pi T_N} 
    \left( \frac{\log(T_N)}{\log(\rho)} + \frac{\log(C_\omega C_2^{-1})}{\log(\rho)} - 1 \right)
    =
    \frac{1}{16\pi \log(\rho)} \frac{\log(T_N)}{T_N}
\end{align*}
if we choose $N$ large so that 
\begin{align*}
    \log(T_N) \geq 2 \log(\rho)\left|\frac{\log(C_\omega C_2^{-1})}{\log(\rho)} - 1\right| = 2 \left|\log(C_\omega C_2^{-1}) - \log(\rho)\right| = 2\left|\log\left(\frac{1}{\rho}\cdot\frac{C_\omega}{C_2}\right)\right|.
\end{align*}
The proof is complete.
\end{proof}

\begin{prop}\label{prop:Thm12Casesalpha}
The rate $T^{-s/\sigma}$ in \eqref{eq:BesovB} is essential and optimal for $q=\infty$, in the sense that, if $\omega\in \mathcal{D}(\sigma, C_\omega, n)$ such that $\omega$ has \emph{exact} Diophantine index $\sigma$, i.e., there is a sequence $k_j \in \Z^n\backslash \{0\}$ with $|k_j|\to \infty$ such that
\begin{equation*}
C_\omega|k_j|^{-\sigma}	\leq |k_j\cdot\omega| \leq C|k_j|^{-\sigma},
\end{equation*}
then there exists $f\in B^s_{\infty,\infty}(\T^n)$ with $s<\sigma$ and a sequence $T_j\to\infty$ as $j\to\infty$ such that 
\begin{align}\label{eq:exam-besovsalphawork}
\left|\frac{1}{T_j}\int_0^{T_j} f(\omega t)\,dt - \int_{\T^n} f(x)\;dx \right|
        \geq C\cdot\left(\frac{1}{T_j}\right)^{s/\sigma}.
\end{align}
\end{prop}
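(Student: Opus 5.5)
We will build $f$ as a lacunary sum of single Fourier modes, one for each of the small divisors $k_j$. Passing to a subsequence, we assume $|k_{j+1}|\geq 2^{4}|k_j|$, so that each $k_j$ (together with $-k_j$) lies in a dyadic annulus $\mathcal{A}_{m_j}$ with the $m_j$ pairwise far apart. Write $\lambda_j:=k_j\cdot\omega$, so that $C_\omega|k_j|^{-\sigma}\leq|\lambda_j|\leq C|k_j|^{-\sigma}$. Replacing $k_j$ by $-k_j$ if needed, we may take $\lambda_j>0$. We then set
\begin{equation*}
f(x)=\sum_{j}|k_j|^{-s}\sin(2\pi k_j\cdot x),
\end{equation*}
which has zero mean.

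\textbf{Step 1 (regularity).} Since only boundedly many of the $k_j$ meet each annulus, and in fact at most one after thinning,
\begin{equation*}
2^{ms}\|\Delta_m f\|_{L^\infty}\leq 2^{ms}\sum_{k_j\in\mathcal{A}_m}2|k_j|^{-s}\leq C(s),
\end{equation*}
uniformly in $m$. Hence $f\in B^s_{\infty,\infty}(\T^n)$, exactly as in Step 1 of Proposition \ref{lem:Theorem1sharpRate2D}.

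\textbf{Step 2 (lower bound).} A direct computation gives
\begin{equation*}
\frac1T\int_0^T f(\omega t)\,dt=\sum_j |k_j|^{-s}\,\frac{1-\cos(2\pi\lambda_j T)}{2\pi\lambda_j T}.
\end{equation*}
Every term is nonnegative. We choose $T_j:=1/(2\lambda_j)$, so that $\cos(2\pi\lambda_jT_j)=-1$ and the $j$-th term equals
\begin{equation*}
\frac{2|k_j|^{-s}}{\pi}\geq c\,\lambda_j^{s/\sigma}\geq c'\,T_j^{-s/\sigma},
\end{equation*}
using $|k_j|^{-s}\geq (\lambda_j/C)^{s/\sigma}$ from the upper bound of the exact index. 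Dropping the other terms, which are all nonnegative, yields \eqref{eq:exam-besovsalphawork}. Finally, $T_j\to\infty$ because $\lambda_j\to 0$.

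The key point is the sign trick that makes all terms nonnegative. The only genuinely needed input is the upper bound $|k_j\cdot\omega|\leq C|k_j|^{-\sigma}$, which converts $|k_j|^{-s}$ into a power of $T_j$. I expect the main obstacle to be the bookkeeping in Step 1: ensuring uniform convergence and that the annuli overlap boundedly, which is what the lacunary thinning handles. Uniform convergence itself follows from $s>0$ and the geometric growth of $|k_j|$.
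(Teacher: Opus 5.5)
Your proof is correct, and it takes a simpler route than the paper's.

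The paper uses cosine modes $f=\sum_j|k_j|^{-s}\cos(2\pi k_j\cdot x)$ with the much stronger thinning $|k_{j+1}|\geq|k_j|^2$ and $T_j=1/(8|k_j\cdot\omega|)$. It then splits the Birkhoff average at $T_j$ into three pieces:
\begin{itemize}
\item the diagonal mode, equal to $\tfrac{2\sqrt2}{\pi}|k_j|^{-s}$;
\item the lower modes $m<j$, bounded via the Diophantine lower bound $|k_m\cdot\omega|\geq C_\omega|k_m|^{-\sigma}$ by roughly $T_j^{-1}\sum_{m<j}|k_m|^{\sigma-s}$, which is $o(|k_j|^{-s})$ only because of the squaring lacunarity together with $s<\sigma$;
\item the higher modes $m>j$, bounded trivially by $\sum_{m>j}|k_m|^{-s}\lesssim|k_{j+1}|^{-s}\leq|k_j|^{-2s}$.
\end{itemize}
Both cross-term pieces must be shown to be genuinely smaller than the diagonal term.

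Your normalization $k_j\cdot\omega>0$ combined with sine modes makes every term $|k_m|^{-s}(1-\cos(2\pi\lambda_m T))/(2\pi\lambda_m T)$ nonnegative for every $T$. All cross terms can therefore simply be dropped. This is the same positivity device the paper uses in the proof of Proposition~\ref{lem:Theorem1sharpRate2D} (sine modes with $\delta_k>0$ for odd $k$), but it is not exploited here.

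What your route buys:
\begin{itemize}
\item There is no cross-term bookkeeping at all.
\item Only the upper half $|k_j\cdot\omega|\leq C|k_j|^{-\sigma}$ of the exact-index hypothesis is used; the lower half only guarantees $\lambda_j\neq0$.
\item Lacunarity is needed only for membership in $B^s_{\infty,\infty}(\T^n)$, so a fixed ratio such as $16>\tfrac{32}{9}$ suffices.
\item The constant $\tfrac{2}{\pi}(2C)^{-s/\sigma}$ is explicit.
\item The hypothesis $s<\sigma$ is not even needed; for $s\geq\sigma$ the bound is merely weaker than $T^{-1}$.
\end{itemize}
The paper's splitting gives in return a two-sided statement: at $T_j$ the average is comparable to the single diagonal mode. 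That is not required for the proposition, and the upper bound at $T_j$ is supplied anyway by Theorem~\ref{thm:BesovHolderCounting}(ii).
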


\begin{proof} We sketch the proof for this Proposition only, since the calculation is similar to Proposition \ref{lem:Theorem1sharpRate2D} and in fact, it can be modified to yield the result for Proposition \ref{lem:Theorem1sharpRate2D} in higher dimension, but we omit the details. Since $|k_j|\to \infty$, we can extract a subsequence, calling again $k_j$, such that 
\begin{align*}
	|k_{j+1}| \geq |k_j|^2 \qquad \text{for all}\;j=1,2,\ldots. 
\end{align*}
We define
\begin{align*}
	f(x) = \sum_{j=1}^\infty \frac{1}{|k_j|^s} \cos(2\pi k_j\cdot x), \qquad x\in \T^n.
\end{align*}
We can check that $f\in B^s_{\infty,\infty}(\T^n)$ by a similar argument as in Proposition \ref{lem:Theorem1sharpRate2D}, and furthermore $\int_{\T^n} f(x)\;dx = 0$. We choose
\begin{equation*}
T_j = \frac{1}{8|k_j\cdot \omega|}  \qquad \Longrightarrow\qquad C_\omega ^{s/\sigma} |k_j|^{-s} \leq \left(\frac{1}{8T_j}\right)^{s/\sigma} \leq C^{s/\sigma} |k_j|^{-s}
\end{equation*}
thanks to the exact Diophantine condition.
We split the integral of the function into three terms.
\begin{equation}\label{eq:fff}
	\left|\frac{1}{T_j}\int_0^{T_j}f(x)\;dx\right| \geq I_2 - |I_1| - |I_3|,
\end{equation}
where
\begin{equation*}
I_1 = \sum_{m<j}\frac{1}{T_j} \int_0^{T_j} \frac{\cos(2\pi k_m \omega t)}{|k_m|^s}\;dt, 
\quad 
I_2 = \frac{1}{T_j} \int_0^{T_j} \frac{\cos(2\pi k_j \omega t)}{|k_j|^s}\;dt, 
\quad 
I_3 = \sum_{m<j}\frac{1}{T_j} \int_0^{T_j} \frac{\cos(2\pi k_m\omega t)}{|k_m|^s}\;dt.
\end{equation*}

{\it \underline{Step 1.} The case $m=j$.}
\begin{align*}
	\frac{1}{|k_j|^s}\frac{1}{T_j} \int_0^{T_j} \cos(2\pi k_j \omega t)\;dt = 	\frac{1}{|k_j|^s} \frac{\sin(2\pi k_j\cdot\omega T_j)}{2\pi k_j\cdot\omega T_j}=	\frac{1}{|k_j|^s}\frac{\sin(\pi/4)}{\pi/4}= \frac{2\sqrt{2}}{\pi}	\frac{1}{|k_j|^s}.
\end{align*}

{\it \underline{Step 2.} The case $m<j$.}
\begin{align*}
	\frac{1}{|k_m|^s}\left| \frac{1}{T_j} \int_0^{T_j} \cos(2\pi k_m \omega t)\;dt\right| \leq 	\frac{1}{|k_m|^s} \left|\frac{\sin(2\pi k_m\cdot\omega T_j)}{2\pi k_m\cdot\omega T_j}\right| \leq |k_m|^{\sigma-s} \frac{1}{T_j} \cdot \frac{1}{2\pi C_\omega} .
\end{align*}

Thanks to $|k_{j+1}| \geq |k_j|^2$, we obtain
\begin{align*}
	\sum_{m<j} \frac{1}{|k_m|^s}\left| \frac{1}{T_j} \int_0^{T_j} \cos(2\pi k_m \omega t)\;dt\right| \leq \frac{1}{2\pi C_\omega T_j} \sum_{m<j} |k_m|^{\sigma-s} = o\left(\frac{1}{T_j}\right)^{s/\sigma}
\end{align*}

{\it \underline{Step 3.} The case $m>j$.}
\begin{align*}
	\sum_{m>j}	\frac{1}{|k_m|^s}\left| \frac{1}{T_j} \int_0^{T_j} \cos(2\pi k_m \omega t)\;dt\right| \leq 	\sum_{m>j} \frac{1}{|k_m|^s}  \leq C|k_m|^{-s} \leq C\left(\frac{1}{T_j}\right)^{s/\sigma}.
\end{align*}
Combine three steps and \eqref{eq:fff} we obtain the conclusion. 
\end{proof}

\begin{prop}\label{prop:Thm12iii}
  For almost every  $\omega\in\R^n$, there exists $f\in C^{0,\alpha}(\T^n)$ with $\alpha\in (0,1)$ and a sequence $T_j\rightarrow \infty$ as $j\rightarrow+\infty$, such that 
\begin{equation}\label{eq:NearlyOptimal2DLem  }
    C_1(\om)\left(\frac{1}{T_j}\right)^\alpha \leq          \left|
            \frac{1}{T_j}\int_0^{T_j} f(\omega t)\;dt - \int_{\T^2} f(x)\;dx 
        \right| 
         \leq  C_2(\om,\eps)\left(\frac{1}{T_j}\right)^{\frac{\alpha}{n-1+\varepsilon}} 
\end{equation}
        for all $\eps>0$. Consequently, the subcritical rate $\mathcal{O}\big(T^{-\frac{k+\alpha}{\sigma}}\big)$ in \eqref{eq:corHolder} is nearly optimal for $n=2$ and $k=0$.
\end{prop}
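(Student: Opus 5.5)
The upper bound in \eqref{eq:NearlyOptimal2DLem  } costs nothing: by the remarks after \eqref{eq:diophan}, for every $\varepsilon>0$ almost every $\omega\in\R^n$ lies in $\mathcal{D}(n-1+\varepsilon,C_{\omega,\varepsilon},n)$. Theorem~\ref{thm:BesovHolderCounting}(iii) with $k=0$ and $\alpha<1\le n-1+\varepsilon$ (the subcritical case) then gives the bound $C_2(\omega,\varepsilon)\|f\|_{C^{0,\alpha}}T^{-\alpha/(n-1+\varepsilon)}$ for every $T$, and in particular along any sequence $T_j$. So the work is the lower bound, for a full-measure set of $\omega$, with $f$ depending only on two coordinates, as Remark~\ref{rmk:somerate}(iii) announces.

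For the lower bound, take $\omega$ with $\omega_1\neq0$ and $\vartheta:=\omega_2/\omega_1$ irrational. After rescaling time by $\omega_1$ we may treat the flow as $t\mapsto(t,\vartheta t)$, and this changes only constants. Let $P_k/Q_k$ be the convergents of $\vartheta$ and set $\delta_k=P_k-\vartheta Q_k$, so that $|\delta_k|\asymp 1/Q_{k+1}$ by \eqref{eq:Diophantineestimates}. By the Borel--Bernstein Theorem~\ref{thm:borelbernstein}(i) with $\varphi(i)=i$, for a.e.\ $\vartheta$ we have $a_{k+1}\ge k+1$ for infinitely many $k$. Along those indices $Q_{k+1}\ge kQ_k$, so $|\delta_k|\lesssim 1/(kQ_k)$; in particular the quantities $|\delta_k| Q_k$ can be made small along a sparse subsequence. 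Pass to a subsequence $k_j$ of such indices with $Q_{k_{j+1}}\ge Q_{k_j}^2$, and define
\[
f(x)=\sum_j Q_{k_j}^{-\alpha}\cos\big(2\pi(P_{k_j}x_1-Q_{k_j}x_2)\big).
\]
This follows the lacunary construction of Proposition~\ref{prop:Thm12Casesalpha}. Since $|(P_k,-Q_k)|\asymp Q_k$ and each dyadic shell contains at most one frequency, $f\in B^\alpha_{\infty,\infty}=C^{0,\alpha}$, with mean zero.

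Next choose $T_j=c/|\delta_{k_j}|$ with a small fixed $c$ (for example $c=1/8$, as in Proposition~\ref{prop:Thm12Casesalpha}). Splitting the Birkhoff average as in \eqref{eq:fff}, the $j$-th term contributes $\frac{\sin(2\pi c)}{2\pi c}\,Q_{k_j}^{-\alpha}$. The tail $m>j$ is bounded by $\sum_{m>j}Q_{k_m}^{-\alpha}\lesssim Q_{k_j}^{-2\alpha}$, which is negligible. The head $m<j$ is at most $\frac{1}{T_j}\sum_{m<j}Q_{k_m}^{-\alpha}|\delta_{k_m}|^{-1}\lesssim |\delta_{k_j}|\,Q_{k_{j-1}}^{2}$, using $|\delta_{k_m}|^{-1}\lesssim Q_{k_m+1}\lesssim a_{k_m+1}Q_{k_m}$ and then choosing the subsequence sparse enough, e.g.\ $Q_{k_j}\ge (a\,Q_{k_{j-1}})^{4}$. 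This gives a head of order $Q_{k_j}^{-1}\ll Q_{k_j}^{-\alpha}$. Hence the average is $\gtrsim Q_{k_j}^{-\alpha}$. Finally $T_j\asymp 1/|\delta_{k_j}|\ge Q_{k_j}$, so $Q_{k_j}^{-\alpha}\ge T_j^{-\alpha}$, which is exactly the claimed lower bound $C_1(\omega)T_j^{-\alpha}$. Here the direction of the inequality goes our way: large $T_j$ only helps.

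I expect the main obstacle to be the head estimate $m<j$. Each earlier mode contributes a term of size $Q_{k_m}^{-\alpha}/(|\delta_{k_m}|T_j)$, and $|\delta_{k_m}|$ can be as small as $1/Q_{k_m+1}$ with $a_{k_m+1}$ unbounded. The sparsification must therefore be chosen after the partial quotients are known, so that $Q_{k_j}$ dominates every polynomial in the previous $Q_{k_m+1}$. I would first try to make this work with the crude lacunarity choice $Q_{k_j}\ge Q_{k_{j-1}+1}^{4}$. If that fails, I would fall back on the averaging-in-$T$ trick of Proposition~\ref{lem:Theorem1sharpRate2D}, which also removes any worry about sign cancellations in the sine factors.
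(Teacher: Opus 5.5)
Your proof is correct, but it takes a different route from the paper's.

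\textbf{What the paper does.} The paper sums over \emph{all} convergents, $f=\sum_{j}Q_j^{-\alpha}\cos(2\pi(-P_j,Q_j)\cdot x)$, and tests at $T=Q_{m_k}$, where the indices $m_k$ come from Lemma~\ref{lem:fullmeasure-new} and satisfy $Q_{m_k+1}\ge c_0m_kQ_{m_k}$. This large partial quotient does all the work:
\begin{enumerate}[label=(\roman*)]
\item it forces $Q_{m_k}|\delta_{m_k}|\to0$, which handles the main term;
\item it makes the tail $\lesssim Q_{m_k+1}^{-\alpha}\ll Q_{m_k}^{-\alpha}$;
\item it controls the head through the identity $e^{2\pi iQ_{m_k}\delta_j}=e^{2\pi iQ_j(Q_{m_k}\vartheta-P_{m_k})}$, i.e.\ every lower mode has run almost an integer number of periods; these contributions are then summed using the geometric growth of $Q_j$.
\end{enumerate}

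\textbf{What you do instead.} You take a lacunary, $\vartheta$-adapted subsequence of convergents and use the resonant times $T_j=c/|\delta_{k_j}|$, as in Proposition~\ref{prop:Thm12Casesalpha}. You bound the head crudely by $1/(2\pi|\delta_{k_m}|T_j)$ per mode and absorb it by sparsification.

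\textbf{What each route buys.} Your route needs no metric input for the lower bound. Since $T_j>cQ_{k_j+1}\ge cQ_{k_j}$, you get $\gtrsim Q_{k_j}^{-\alpha}\gtrsim T_j^{-\alpha}$ for \emph{every} irrational $\vartheta$. So your Borel--Bernstein step is never actually used, and ``almost every'' enters only through the upper bound. The paper's route gives a canonical, non-sparsified observable tested at the natural times $Q_{m_k}$, at the cost of the full-measure condition on partial quotients.

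\textbf{One correction to your head estimate.} Set $S_{j-1}=\sum_{m<j}Q_{k_m}^{-\alpha}|\delta_{k_m}|^{-1}$. The head is then at most $\frac{|\delta_{k_j}|}{2\pi c}S_{j-1}\le \frac{S_{j-1}}{2\pi c\,Q_{k_j}}$, which is not of order $Q_{k_j}^{-1}$. Your sample choice $Q_{k_j}\ge(aQ_{k_{j-1}})^4$ only gives roughly $S_{j-1}\lesssim Q_{k_j}^{1/4}$, so the head is about $Q_{k_j}^{-3/4}$. That is not $\ll Q_{k_j}^{-\alpha}$ once $\alpha\ge 3/4$. The sparsification must depend on $\alpha$: choose $k_j$ so that $Q_{k_j}^{1-\alpha}\ge C\,S_{j-1}$. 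This is always possible, because $S_{j-1}$ is fixed once $k_1,\dots,k_{j-1}$ are chosen and $\alpha<1$. With that choice, plus $Q_{k_{j+1}}\ge Q_{k_j}^2$ for the tail, the argument goes through, and no averaging in $T$ is needed.
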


The following lemmas \ref{lem:fullmeasure-new} and \ref{lem:func} are used in the proof of Proposition \ref{prop:Thm12iii}. 

\begin{lem}\label{lem:fullmeasure-new}
	There exists a full measure subset $\Omega_0\subset(0,1)$
	such that for every irrational number $\vartheta\in\Omega$,
	there exists an increasing sequence of positive integers
	$\{m_k\}_{k\ge1}$ satisfying
	\begin{equation*}
			Q_{m_k+1}\geq c_0 m_k Q_{m_k},
	\qquad k\geq 1,
	\end{equation*}
	where $c_0>0$ is a uniform constant, and $\frac{P_k}{Q_k}$ is the Diophantine approximation of $\vartheta$. 
\end{lem}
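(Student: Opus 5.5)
The plan is to apply the Borel--Bernstein Theorem \ref{thm:borelbernstein}(i) with $\varphi(i)=c\,i$ for a fixed constant $c>0$, say $c=1$. Since $\sum_i 1/\varphi(i)=\sum_i 1/(c\,i)=\infty$, part (i) gives a full-measure set $\Omega_1\subset(0,1)$ on which the partial quotients satisfy $a_i(\vartheta)\ge c\,i$ for infinitely many $i$. We intersect $\Omega_1$ with the irrationals, which removes only a null set, and call the result $\Omega_0$.

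Fix $\vartheta\in\Omega_0$. Enumerate the infinitely many indices $i_1<i_2<\cdots$ with $a_{i_k}\ge c\,i_k$, and set $m_k:=i_k-1$. Passing to a subsequence if necessary, $(m_k)$ is increasing and $m_k\ge1$. The recursion \eqref{eq:DiophantinePQ} gives
\begin{equation*}
Q_{m_k+1}=a_{m_k+1}Q_{m_k}+Q_{m_k-1}\ \ge\ a_{i_k}Q_{m_k}\ \ge\ c\,i_k\,Q_{m_k}=c\,(m_k+1)\,Q_{m_k}\ \ge\ c\,m_k\,Q_{m_k}.
\end{equation*}
This is the claim with the uniform constant $c_0=c$, which is independent of $\vartheta$. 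Here we used $Q_{m_k-1}\ge0$, which holds since $Q_{-1}=0$ and $Q_j\ge1$ for $j\ge0$.

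There is no real obstacle. The one point to check is the indexing convention: Theorem \ref{thm:borelbernstein} is stated for $\vartheta=[0;a_1,a_2,\ldots]$, which matches $a_0=0$ for $\vartheta\in(0,1)$, so the $Q_k$ in \eqref{eq:DiophantinePQ} are built from the same partial quotients. One should also confirm that the index shift $m_k=i_k-1$ still produces infinitely many, strictly increasing $m_k$, which it clearly does.
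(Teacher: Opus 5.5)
Your proposal is correct and follows the paper's proof essentially verbatim: the paper also applies the Borel--Bernstein theorem with $\varphi(j)=c_0 j$ to get $a_{n_k}\ge c_0 n_k$ infinitely often. It then sets $m_k=n_k-1$ and uses the recursion $Q_{m_k+1}=a_{m_k+1}Q_{m_k}+Q_{m_k-1}\ge c_0(m_k+1)Q_{m_k}\ge c_0 m_k Q_{m_k}$, exactly as you do.
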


\begin{proof}[Proof of Lemma \ref{lem:fullmeasure-new}]
	Let $c_0>0$ be any fixed constant. Applying the Borel--Bernstein Theorem (see Theorem \ref{thm:borelbernstein}) with $\varphi(j)=c_0 j$ for $j\in \N$, and using
	\begin{equation*}
		\sum_{j=1}^{\infty}\frac1{\phi(j)}
	=
	\frac1{c_0}\sum_{j=1}^{\infty}\frac1j
	=
	\infty,
	\end{equation*}
	we conclude that there exists a full measure subset $	\Omega_0\subset(0,1)$
	such that for every irrational $\vartheta\in\Omega$, the corresponding continuous fraction satisfies $	a_j(\vartheta)\ge c_0 j$
	for infinitely many $j$.
	Let $\{n_k\}$ be the corresponding subsequence, namely 
\begin{equation*}
	a_{n_k} = a_{n_k}(\vartheta) \geq c_0 n_k \qquad\text{for all}\;k\in \N. 
\end{equation*}	
Let $m_k = n_k-1$, and without loss of generality $m_k\geq 1$.
As a consequence, by definition of $\frac{P_k}{Q_k}$ in \eqref{eq:DiophantinePQ} we have
	\begin{align*}
			Q_{m_k+1}
			=
			a_{m_k+1}Q_{m_k}+Q_{m_k-1} 
			\geq 
			a_{m_k+1}Q_{m_k} \geq c_0 (m_{k}+1)Q_{m_k} \geq c_0 m_k Q_{m_k}. 
	\end{align*}			
	This proves the lemma.
\end{proof}

\begin{lem}\label{lem:func} Let $\alpha\in(0,1)$ and $(P_j, Q_j)$ be the $j$-th Diophantione approximation of $\vartheta \in \Omega_0\subset (0,1)$ defined in Lemma \ref{lem:fullmeasure-new} for $j\geq 1$. We define
\begin{equation}\label{def:functionf-new}
	f(x)
	=
		\sum_{j=1}^{\infty}
	\frac{1}{Q_j^\alpha}
	\cos\left(
		2\pi (-P_j,  Q_j)\cdot (x_1,x_2)
	\right), 
	\qquad
	x = (x_1,x_2	)\in\mathbb T^2. 
\end{equation}
	The function $f$ defined in \eqref{def:functionf-new} belongs to
	$B_{\infty,\infty}^\alpha(\T^2)=C^{0,\alpha}(\mathbb T^2)$ for $\alpha\in(0,1)$.
\end{lem}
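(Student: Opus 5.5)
The plan is to verify directly that $\sup_{j\ge -1}2^{j\alpha}\Vert\Delta_j f\Vert_{L^\infty(\T^2)}<\infty$, following Step~1 of the proof of Proposition~\ref{lem:Theorem1sharpRate2D}. The only new feature is that here \emph{all} convergents appear, not just those of odd index. Uniform convergence of the series defining $f$ comes first. From the recursion \eqref{eq:DiophantinePQ} with $a_k\ge1$ we have $Q_{k+2}\ge 2Q_k$, so $Q_k\ge 2^{(k-1)/2}$. Hence $\sum_j Q_j^{-\alpha}<\infty$, the series converges absolutely and uniformly, and $f$ is continuous with $\int_{\T^2}f=0$. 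The Fourier coefficients of $f$ are $\widehat f(\pm\xi_j)=\tfrac12 Q_j^{-\alpha}$ with $\xi_j=(-P_j,Q_j)$. These frequencies are pairwise distinct, since $\gcd(P_j,Q_j)=1$ and the $Q_j$ are strictly increasing for $j\ge2$ (the possible coincidence $Q_0=Q_1$ only involves finitely many terms and is harmless).

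Next I will locate the frequencies in the dyadic annuli. Since $\vartheta\in(0,1)$ and $0\le P_j\le Q_j$, we have $Q_j\le|\xi_j|\le\sqrt2\,Q_j$. Therefore $\pm\xi_j\in\mathcal A_\ell$ forces $\tfrac{3}{4\sqrt2}2^\ell\le Q_j\le\tfrac83 2^\ell$. In particular $2^{\ell\alpha}Q_j^{-\alpha}\le (4\sqrt2/3)^\alpha$ for every such $j$.

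The key step is a uniform bound on the number of indices $j$ with $\xi_j\in\mathcal A_\ell$. The paper cannot use exponential growth \eqref{eq:expgrowth} here, because $\vartheta\in\Omega_0$ is typically not badly approximable and its partial quotients are unbounded. The plan is to use only the lower growth $Q_{k+2}\ge2Q_k$, which holds for every irrational. If $j<m$ both lie in $\mathcal A_\ell$, then $Q_m/Q_j\le \tfrac{8}{3}\cdot\tfrac{4\sqrt2}{3}<16$, while $Q_m/Q_j\ge 2^{\lfloor (m-j)/2\rfloor}$. This gives $m-j\le 9$, so at most $10$ indices (20 frequencies with signs) fall in any single annulus. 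Large partial quotients only make the $Q_j$ sparser, so they help rather than hurt.

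Combining these bounds with $0\le\varphi_\ell\le1$ gives
\[
2^{\ell\alpha}\Vert\Delta_\ell f\Vert_{L^\infty(\T^2)}\le 2^{\ell\alpha}\sum_{\xi\in\mathcal A_\ell}|\widehat f(\xi)|\le 10\,(4\sqrt2/3)^\alpha ,
\]
uniformly in $\ell\ge0$. The low block $\Delta_{-1}f$ is bounded by $\sum_j Q_j^{-\alpha}$. Hence $f\in B^\alpha_{\infty,\infty}(\T^2)$, and $f\in C^{0,\alpha}(\T^2)$ by Lemma~\ref{lem:BesovHolderEmbed}(a) since $\alpha\in(0,1)$. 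The only delicate point I anticipate is the counting step, specifically making sure it does not rely on the badly approximable hypothesis used earlier. The universal Fibonacci-type lower bound on $Q_k$ is what resolves it.
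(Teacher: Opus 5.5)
Your proposal is correct. The paper omits this proof as ``standard'', and your argument is exactly the natural adaptation of Step~1 in the proof of Proposition~\ref{lem:Theorem1sharpRate2D}: Littlewood--Paley block counting, with the two-sided exponential bound \eqref{eq:expgrowth} replaced by the universal growth $Q_{j+2\ell}\ge 2^\ell Q_j$, which the paper itself records as \eqref{eq:q-growth}. That replacement is necessary, since \eqref{eq:expgrowth} is unavailable for $\vartheta\in\Omega_0$, whose partial quotients are unbounded. Two minor slips do not affect the argument: your bound $Q_m/Q_j<16$ in fact gives $m-j\le 7$ rather than $9$, and the sum starts at $j=1$, so the $Q_0=Q_1$ caveat is moot.
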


We omit the proof of this Lemma \ref{lem:func} as it is standard.

\begin{proof}[Proof of Proposition \ref{prop:Thm12iii}]
The argument for this target is essentially for $n=2$. 
	Without loss of generality, we can always assume $\omega = (1,\vartheta)$ with $\vartheta\in \Omega_0\subset (0,1)$, where $\Omega_0$ is the subset with full measure defined in Lemma \ref{lem:fullmeasure-new}. Then the function $f$ defined in \eqref{def:functionf-new} satisfies	
	\begin{equation*}
		\frac{1}{T}\int_0^T f(\omega t)\,dt
	=
	\frac{1}{T}
	\int_0^{T}
	f(t,\vartheta t)\,dt \qquad\text{for}\;\vartheta\in\R
	\backslash\Q. 
	\end{equation*}
Due to subsection \ref{subsection:Diophantine}, if we 	
denote by $(P_j,Q_j)$ for $j\geq 1$ the continued fraction expansion of $\vartheta$, then from \eqref{eq:Diophantineestimates} we have
\begin{equation}\label{eq:continued-fraction}
			\frac1{2Q_{j+1}}
			<
			|P_j - \vartheta Q_j|
			<
			\frac{1}{Q_{j+1}} 
			\qquad\text{for every}\; j\geq 1.
		\end{equation}
Furthermore, by \eqref{eq:DiophantinePQ} the sequence $\{Q_j\}$ grows at least exponentially:
		\begin{equation}\label{eq:q-growth}
			Q_{j+2\ell}\geq 2^\ell Q_j,
			\qquad \text{for all}\; j,\ell\geq 0.
		\end{equation}
Let $\vartheta\in\Omega$ as defined in \eqref{lem:fullmeasure-new}. By Lemma~\ref{lem:fullmeasure-new}, there exists a subsequence $\{m_k\}_{k\geq 1}$ such that
\begin{align*}
	Q_{m_k+1} \geq c_0 m_k Q_{m_k}.
\end{align*}
	To simplify notation, we write $m$ in place of $m_k$ and set $\delta_j = -P_j + Q_j \vartheta$. Then
	\begin{align*}
		(-P_j, Q_j)\cdot (t,\vartheta t) = (-P_j + Q_j\vartheta)t = \delta_j t. 
	\end{align*}		
	Since $\int_{\mathbb T^2}f=0$, we have
	\begin{align}\label{eq:Reest}
		\frac{1}{T}\int_0^T f(t,\vartheta t)\,dt
	=
	\mathrm{Re}
	\left(
	\sum_{j\geq 1}
		\frac{1}{Q_j^\alpha}
		\frac{e^{2\pi i\delta_j T}-1}
		{2\pi i\delta_jT}
		\right) = \mathrm{Re}\left(I_1 + I_2+I_3\right) \geq \mathrm{Re}(I_2) - |I_1| - |I_3|,
	\end{align}
	where we evaluate this expression at $T=Q_{m_k}$, and
	\begin{align*}
	I_1 = \sum_{j<m_k}\frac{1}{Q_j^\alpha}
		\frac{e^{2\pi i\delta_j T}-1}
		{2\pi i\delta_jT}, 
		\qquad
	I_2 = \frac{1}{Q_{m_k}^\alpha}
		\frac{e^{2\pi i Q_{m_k} \delta_j}-1}{2\pi i \delta_j Q_{m_k}}		,
		\qquad
	I_3 = \sum_{j>m_k}\frac{1}{Q_j^\alpha}
		\frac{e^{2\pi i\delta_j T}-1}
		{2\pi i\delta_jT}.
	\end{align*}
	\medskip
	
	\underline{{\it Step 1. Estimating $I_2$	. }}
	When $j=m_k$, \eqref{eq:continued-fraction} implies 
	\begin{align*}
	|\delta_{m_k}| < \frac{1}{Q_{m_k+1}}
	\qquad\Longrightarrow\qquad
	|Q_{m_k} \delta_{m_k}| < \frac{Q_{m_k}}{Q_{m_k+1}} 
		\leq \frac{1}{c_0m_k}. 
	\end{align*}
	Hence $Q_{m_k} \delta_{m_k} \to 0$ as $k\to \infty$, and 	therefore, at $T=Q_{m_k}$, for all sufficiently large $k$ we have 
	\begin{align}
		\mathrm{Re} \left(
		\frac{e^{2\pi i\delta_{m_k} Q_{m_k}}-1}
		{2\pi i\delta_{m_k} Q_{m_k}}
	\right) 
	= \frac{\sin(2\pi \delta_{m_k}Q_{m_k})}{2\pi \delta_{m_k}Q_{m_k}}
	\geq \frac{1}{2}
	\qquad\Longrightarrow\qquad 
	\mathrm{Re}(I_2) \geq \frac{1}{2 Q_{m_k}^\alpha}. 
	\label{eq:mainmode-final2}
	\end{align}			
	\medskip
	
		\underline{{\it Step 2. Estimating $I_1$	. }}
	For $j<m_k$, using $\delta_j = -P_j+Q_j\vartheta $ we have
	\begin{align*}
		Q_{m_k} \delta_j = -Q_{m_k}P_{m_k} +		 Q_{m_k} Q_j \vartheta 		
		\qquad\Longrightarrow\qquad 
		e^{2\pi i Q_{m_k} \delta_j} = e^{2\pi i Q_j Q_{m_k}\vartheta}. 
	\end{align*}
	We compute
	\begin{align*}
		\left|
			e^{2\pi i Q_{m_k} \delta_j} - 1		
		\right|
		&=
		\left|
			e^{2\pi i Q_{m_k} Q_j \vartheta} - e^{2\pi i P_{m_k} Q_j}
		\right|\\
		&= 
		\left|
			e^{2\pi i Q_j \cdot (Q_{m_k} \vartheta - P_{m_k})} - 1
		\right|
		\leq 2\pi |Q_j|\cdot |\delta_{m_k}| 
		\leq 2\pi \cdot 		\frac{Q_j}{Q_{m_k+1}}. 		
	\end{align*}
	Due to \eqref{eq:continued-fraction} we have $|\delta_j|\geq \frac{1}{2Q_{j+1}}$, hence
	\begin{align*}
		\left|
		\frac{e^{2\pi i Q_{m_k}\delta_j} - 1}{2\pi i Q_{m_k} \delta_j} 
		\right| \leq 2\cdot \frac{Q_j Q_{j+1}}{Q_{m_k}Q_{m_k+1}}
	\end{align*}

	Hence
	\begin{equation}\label{eq:anqm<}
		\left |
			\sum_{j=1}^{m_k-1} \frac{1}{Q_j^\alpha}\cdot 
		\frac{e^{2\pi i Q_{m_k}\delta_j} - 1}{2\pi i Q_{m_k} \delta_j} 
			\right| 
			\leq 			
			\sum_{j=1}^{m_k-1} \frac{2}{Q_j^\alpha}\cdot \frac{Q_jQ_{j+1}}{Q_{m_k}Q_{m_{k}+1}}
			=
			\frac{2}{Q_{m_k}Q_{m_{k}+1}}
			\sum_{j=1}^{m_k-1} Q_j^{1-\alpha} Q_{j+1}. 
	\end{equation}

We recall that $Q_{j+2}\geq 2Q_j$ due to \eqref{eq:q-growth}. For $j<m_k$, there are two cases:
\begin{itemize}
	\item If \(m_k=j+1+2\ell\) for some integer $\ell \geq 0$, then
	\begin{equation*}
			Q_j^{1-\alpha}Q_{j+1}
	\leq
	2^{-\frac{2-\alpha}{2}(m_k-1-j)}
	Q_{m_k-1}^{1-\alpha}Q_{m_k}
	=
	2^{-(2-\alpha)\ell}
	Q_{m_k-1}^{1-\alpha}Q_{m_k}.
	\end{equation*}
	\item If \(m_k\neq j+1+2\ell\) for all integer $\ell\geq 0$, since \(0<\alpha<1\) and \(Q_j\leq Q_{j+1}\leq Q_{j+2}\), we have
	\begin{equation*}
	Q_j^{1-\alpha}Q_{j+1}
	\leq
	Q_{j+1}^{1-\alpha}Q_{j+2}
	\end{equation*}
	and $j+1$ is an index of the previous form: $m_k = (j+1) + 1 +2\ell$. 
\end{itemize}	
Therefore, grouping consecutive terms in pairs, we obtain
\begin{align*}
	\sum_{j=1}^{m_k-1}Q_j^{1-\alpha}Q_{j+1}
	&\leq
	2Q_{m_k-1}^{1-\alpha}Q_{m_k}
	\sum_{\ell=0}^{\infty}2^{-(2-\alpha)\ell}=
	\frac{2}{1-2^{-(2-\alpha)}}
	Q_{m_k-1}^{1-\alpha}Q_{m_k}.
\end{align*}	
	Taking this estimate into the inequality \eqref{eq:anqm<} and the fact that $Q_{m_k+1} \geq c_0 m_k Q_{m_k}$, we obtain
	\begin{align}\label{eq:small-indices-final}
		& \left |
			\sum_{j=1}^{m_k-1} \frac{1}{Q_j^\alpha}\cdot 
		\frac{e^{2\pi i Q_{m_k}\delta_j} - 1}{2\pi i Q_{m_k} \delta_j} 
			\right| 
			\leq 
			\frac{4}{1-2^{2-\alpha}} \cdot \frac{Q_{m_k-1}^\alpha}{Q_{m_k+1}} \nonumber \\
			&\qquad\qquad
			 \leq 
			\frac{4}{1-2^{2-\alpha}} \cdot \left(\frac{Q_{m_k-1}}{Q_{m_k+1}}			\right)^\alpha \cdot \frac{1}{Q_{m_k}^\alpha} \leq \frac{4}{1-2^{2-\alpha}} \left(\frac{1}{c_0^2 m_k(m_k-1)}\right)^\alpha\frac{1}{Q_{m_k}^\alpha} \leq \frac{1}{8}\cdot\frac{1}{Q_{m_k}^{\alpha}}
	\end{align}
	if $m_k$ is large enough. 
	
	\medskip
	\underline{{\it Step 3. Estimating $I_3$	. }}
	For $j>m_k$, we have
\begin{align*}
		\left|
	\frac{e^{2\pi i\delta Q_{m_k}}-1}
	{2\pi i\delta_j Q_{m_k}}
	\right|
	\leq 1 
		\qquad\Longrightarrow\qquad
	\left |
		\sum_{j=m_k}^{\infty} \frac{1}{Q_j^\alpha}\cdot 
		\frac{e^{2\pi i Q_{m_k}\delta_j} - 1}{2\pi i Q_{m_k} \delta_j} 
			\right| 
			\leq \sum_{j=m_k}^{\infty} \frac{1}{Q_j^\alpha}.
\end{align*}
Using \(Q_{j+2}\geq 2Q_j\) due to \eqref{eq:q-growth} and $Q_{m_k+1}\geq c_0 m_k Q_{m_k}$, we split the sum into two terms
\begin{align}
	\sum_{j=m_k+1}^{\infty}\frac{1}{Q_j^\alpha}
	&=
	\sum_{\ell=0}^{\infty}
	\frac{1}{Q_{m_k+1+2\ell}^{\alpha}}
	+
	\sum_{\ell=0}^{\infty}
	\frac{1}{Q_{m_k+2+2\ell}^{\alpha}} \nonumber \\
	&\leq
	\frac{1}{1-2^{-\alpha}}
	\left(
		\frac{1}{Q_{m_k+1}^{\alpha}}
		+
		\frac{1}{Q_{m_k+2}^{\alpha}}
	\right)
	\leq
	\frac{2}{1-2^{-\alpha}}
	\frac{1}{Q_{m_k+1}^{\alpha}}
	\leq
	\frac{2(c_0m_k)^{-\alpha}}
	{1-2^{-\alpha}}
	\frac{1}{Q_{m_k}^{\alpha}} \leq \frac{1}{8}\cdot \frac{1}{Q_{m_k}^\alpha} \label{eq:large-indices-final}
\end{align}
if $m_k$ is large enough. 
\medskip 

	Combining
	\eqref{eq:mainmode-final2},
	\eqref{eq:small-indices-final},
	and
	\eqref{eq:large-indices-final} into \eqref{eq:Reest},
	we obtain
	\begin{align*}
	\frac{1}{Q_{m_k}}\int_0^{Q_{m_k}}f(t,\vartheta t)\,dt
	\geq
	\mathrm{Re}(I_2) - |I_1| - |I_3| 
		\geq 
	\frac{1}{2Q_{m_k}^\alpha}
	-\frac{1}{8Q_{m_k}^\alpha}
	-\frac{1}{8Q_{m_k}^\alpha}
		\geq
	\frac{1}{4Q_{m_k	}^\alpha}.
	\end{align*}
	Taking $T_k=Q_{m_k}$, it follows that
	\begin{align*}
	\frac{1}{T_k}\int_0^{T_k}f(t,\vartheta t)\,dt
	\geq
	\frac{1}{4T_{k}^\alpha},
	\end{align*}
	which proves \eqref{eq:NearlyOptimal2D} for the case $n=2$.
	
For $n\geq 3$, we can always choose $\om=(\om_1,\cdots,
\om_n)\in\R^n$ nonresonant, of which the previous two components $(\om_1,\om_2)\in\R^2$ satisfy the arithmetic property given in Lemma \ref{lem:fullmeasure-new} (by taking $\vartheta=\om_2/\om_1$). As we can see, such  frequency $\om$ exists almost everywhere in $\R^n$. As for the observable $f(x)\in C^{0,\alpha}(\T^n)$, we can always make it depends only on $(x_1,x_2)\in\R^2$ and of a form as in \eqref{def:functionf-new}. Previous argument is still available and get the lower bound as in \eqref{eq:NearlyOptimal2D}. 
\end{proof}

\subsection{Deduction of discrete Birkhoff average and its convergence rate}\label{a1}

For any $\om\in\R^n$ of which $\wh\om:=(\om,1)\in\cD(\sigma,C_\om,n+1)$ for some $C_\om>0$, we can show that this is equivalent to
\begin{equation*}
	\Vert \xi\cdot \omega\Vert_{\mathbb{Z}} := \min_{\xi_0\in \mathbb{Z}} |\xi_0 + \xi\cdot \omega| \geq \frac{C_\omega}{|\om|} |\xi|^{-\sigma} \qquad\text{for all}\;\xi\in \Z^n\backslash \{0\}
\end{equation*}
The Birkhoff ergodic Theorem indicates the uniform convergence of $\lim_{N\to \infty}	\frac{1}{N}\sum_{j=0}^{N-1} f(x+j\omega) = \int_{\T^n} f(y)\;dy$ for any $f\in C(\T^n)$.
Now we aim to obtain the convergence rate for H\"older continuous $f$. We record the following standard lemma, whose proof is omitted.

\begin{lem}\label{lem:estimateZ}
Let $(\omega,1) \in \mathcal{D}(\sigma, C_\omega, n+1)$.
\begin{itemize}
\item[(i)] Suppose $\Vert z\Vert_{\mathbb{Z}} := \min_{m\in \Z} |z-m|$ is the quotient norm of $z\in\R$ on $\T$, then
\begin{equation}\label{eq:estimateSince}
	\pi \Vert \xi\cdot \omega\Vert_{\Z}  \leq |1-e^{2\pi i \xi \cdot \omega}| \leq 2\pi \Vert \xi\cdot \omega\Vert_{\Z} \qquad\text{for all}\;\xi\in \Z^n. 
\end{equation}

\item[(ii)] For $N\in \N$ and $x\in \T^n$, we have
\begin{align*}
	\left|\sum_{j=0}^{N-1} e^{2\pi \xi\cdot (x+j\omega)}\right| = \left|\frac{1-e^{2\pi i N\xi\cdot \omega}}{1-e^{2\pi i\xi \cdot \omega}}\right| \leq \frac{2}{\pi}\cdot \frac{1}{\Vert \xi\cdot\omega\Vert_{\Z}}.
\end{align*}
\end{itemize} 
\end{lem}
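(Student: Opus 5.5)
The plan is to reduce everything to the elementary identity $|1-e^{2\pi i z}| = 2|\sin(\pi z)|$ for $z\in\R$, and then use periodicity and the Diophantine hypothesis on $(\omega,1)$ to make the quotient norm $\Vert\cdot\Vert_{\Z}$ appear.

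For (i), write $|1-e^{2\pi i z}| = |e^{-\pi i z}-e^{\pi i z}| = 2|\sin(\pi z)|$. The function $z\mapsto|\sin(\pi z)|$ is $1$-periodic and even. Hence, choosing $m\in\Z$ with $u:=|z-m|=\Vert z\Vert_{\Z}\in[0,\tfrac12]$, we get $|1-e^{2\pi i z}| = 2\sin(\pi u)$. The upper bound then follows from $\sin(\pi u)\le \pi u$. For the lower bound we use concavity of $\sin$ on $[0,\pi/2]$ (Jordan's inequality), which gives $\sin(\pi u)\ge 2u$ for $u\in[0,\tfrac12]$. Therefore $2\sin(\pi u)\ge 4u\ge \pi u$. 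Applying this with $z=\xi\cdot\omega$ yields \eqref{eq:estimateSince}.

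For (ii), we read the exponent as $2\pi i\,\xi\cdot(x+j\omega)$. The factor $e^{2\pi i \xi\cdot x}$ has modulus one and can be pulled out. What remains is the finite geometric sum $\sum_{j=0}^{N-1} r^j$ with $r=e^{2\pi i\xi\cdot\omega}$, which equals $(1-r^N)/(1-r)$ provided $r\neq 1$. This condition is where the hypothesis enters. For $\xi\in\Z^n\setminus\{0\}$ and any $\xi_0\in\Z$, we have $|\xi_0+\xi\cdot\omega| = |(\xi,\xi_0)\cdot(\omega,1)| \ge C_\omega|(\xi,\xi_0)|^{-\sigma}>0$, since $(\omega,1)\in\mathcal D(\sigma,C_\omega,n+1)$. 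Thus $\xi\cdot\omega\notin\Z$ and $\Vert\xi\cdot\omega\Vert_{\Z}>0$.

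Finally, we bound the numerator trivially by $|1-r^N|\le 2$, and bound the denominator from below by $\pi\Vert\xi\cdot\omega\Vert_{\Z}$ using (i). This gives $\frac{2}{\pi}\Vert\xi\cdot\omega\Vert_{\Z}^{-1}$. For $\xi=0$ the right-hand side is $+\infty$ by convention, so the bound is vacuous and is only meant for $\xi\neq0$. There is no real obstacle here. The only points needing care are the reduction to $u\in[0,\tfrac12]$ via periodicity, so that Jordan's inequality applies, and checking non-resonance of $(\omega,1)$, so that the geometric-sum formula is legitimate.
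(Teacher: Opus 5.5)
Your proof is correct. The paper omits this proof as standard, and your argument is the standard one it has in mind: the identity $|1-e^{2\pi i z}| = 2|\sin(\pi z)|$, reduction to $\Vert z\Vert_{\Z}\in[0,\tfrac12]$, Jordan's inequality (which actually gives the sharper constant $4$ in place of $\pi$), and the geometric-sum formula. You are also right to read the exponent in (ii) as $2\pi i\,\xi\cdot(x+j\omega)$, and to note that the hypothesis $(\omega,1)\in\mathcal{D}(\sigma,C_\omega,n+1)$ is needed only to guarantee $\Vert \xi\cdot\omega\Vert_{\Z}>0$ for $\xi\neq 0$, while (i) holds for every real number.
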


The next lemma is crucial for estimating the rate of convergence. Unlike in the continuous setting, the discrete case requires a uniform distribution estimate on the circle, not on the straight line. Thus, some additional care is needed before one can obtain a geometric sum as in the proof of Theorem \ref{thm:BesovHolderCounting}.

\begin{lem}\label{lem:keyEstimateDiscrete} Let $(\omega,1) \in \mathcal{D}(\sigma, C_\omega, n+1)$, $p>1$, and let $\mathcal{A}_j$ be defined as in \eqref{eq:annulusZ}. We have
\begin{align*}
	\left(	\sum_{\xi\in \mathcal{A}_j} \frac{1}{\Vert \xi\cdot \omega\Vert_{\Z} ^p } \right)^{1/p} \leq  \frac{C_p}{\delta_j}, 
\qquad\text{where}\qquad 
	\delta_j = C_\omega \left( \frac{3}{8}\right)^\sigma 2^{-j\sigma},\qquad  C_p=\left(2 \sum_{m=1}^\infty \frac{1}{m^p}\right)^{1/p}. 
\end{align*}
\end{lem}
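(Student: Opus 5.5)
The plan is to copy the proof of Lemma~\ref{lem:keyThm1}, replacing the real line by the circle $\T=\R/\Z$. We measure distances with $\Vert\cdot\Vert_{\Z}$, and we use the Diophantine hypothesis in the circle form $\Vert \eta\cdot\omega\Vert_{\Z}\geq C_\omega|\eta|^{-\sigma}$ for $\eta\in\Z^n\setminus\{0\}$. This is the reformulation recorded just before Lemma~\ref{lem:estimateZ}, with the harmless factor $|\omega|^{-1}$ absorbed into $C_\omega$.

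\textbf{Step 1: distance from $0$.} For $j\geq 0$ and $\xi\in\mathcal{A}_j$ we have $|\xi|\leq \frac{8}{3}2^j$. Hence
\begin{equation*}
\Vert\xi\cdot\omega\Vert_{\Z}\geq C_\omega\left(\tfrac{3}{8}\right)^\sigma 2^{-j\sigma}=\delta_j .
\end{equation*}
This is where the constant $(3/8)^\sigma$ in the statement comes from.

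\textbf{Step 2: spacing on the circle.} Let $\theta_\xi\in(-\tfrac12,\tfrac12]$ be the representative of $\xi\cdot\omega$ mod $1$, so that $|\theta_\xi|=\Vert\xi\cdot\omega\Vert_{\Z}$. For distinct $\xi_1,\xi_2\in\mathcal{A}_j$ we have $|\theta_{\xi_1}-\theta_{\xi_2}|\geq\Vert(\xi_1-\xi_2)\cdot\omega\Vert_{\Z}$, because $\theta_{\xi_1}-\theta_{\xi_2}\equiv(\xi_1-\xi_2)\cdot\omega$ mod $1$. Applying the Diophantine condition to $\eta=\xi_1-\xi_2\neq0$ then gives a lower bound for the gap. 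The map $\xi\mapsto\theta_\xi$ is injective on $\mathcal{A}_j$, since $\Vert\eta\cdot\omega\Vert_{\Z}>0$ for $\eta\neq 0$.

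\textbf{Step 3: sorting and summing.} Split the $\theta_\xi$ into positive values $0<x_1<x_2<\dots$ and negative values $0>y_1>y_2>\dots$. By Steps 1--2 and induction, $x_m\geq m\delta$ and $|y_m|\geq m\delta$, where $\delta$ is the smaller of the distance-to-$0$ bound and the gap bound. Exactly as in Lemma~\ref{lem:keyThm1}, this gives
\begin{equation*}
\sum_{\xi\in\mathcal{A}_j}\Vert\xi\cdot\omega\Vert_{\Z}^{-p}\leq 2\sum_{m\geq1}(m\delta)^{-p}=\left(\frac{C_p}{\delta}\right)^p .
\end{equation*}
The case $j=-1$ contains only finitely many nonzero modes and is handled by the same argument or by enlarging constants.

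\textbf{Main obstacle: the constant in the gap bound.} In Step 2 the naive estimate is $|\xi_1-\xi_2|\leq\frac{16}{3}2^j$, which only gives the gap $C_\omega(3/16)^\sigma2^{-j\sigma}$, as in Lemma~\ref{lem:keyThm1}. To reach the stated $\delta_j=C_\omega(3/8)^\sigma2^{-j\sigma}$, I would first try to exploit the symmetry $\xi\mapsto-\xi$ of $\mathcal{A}_j$. Since $\Vert\xi\cdot\omega\Vert_{\Z}=\Vert(-\xi)\cdot\omega\Vert_{\Z}$, the sum is twice the sum over a half-shell. One would then hope to compare only points whose difference has norm at most $\frac83 2^j$. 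I expect this step to be genuinely delicate, and possibly not fully achievable. If it fails, the argument still yields the bound with $(3/16)^\sigma$ in place of $(3/8)^\sigma$, which differs from the stated $\delta_j$ only by the factor $2^{\sigma}$. That factor is absorbed into the constant $C(\sigma,k+\alpha)$ in Corollary~\ref{cor:discreteRate}, so it does not affect any downstream rate.
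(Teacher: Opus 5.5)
Your argument is the paper's argument: the circle version of Lemma~\ref{lem:keyThm1}. It uses the distance-to-$0$ bound from $|\xi|\le\frac83 2^j$, the separation bound from the Diophantine condition applied to $\xi_1-\xi_2$, and the count of at most two points of $S_j$ in each level set $E_m=\{x\in\T: m\delta_j\le\Vert x\Vert_{\Z}<(m+1)\delta_j\}$. The paper reaches $(3/8)^\sigma$ exactly at the step you flag. It asserts $|\xi_1-\xi_2|\le(\frac83-\frac34)2^j=\frac{23}{12}2^j$ for $\xi_1,\xi_2\in\mathcal{A}_j$, and deduces a gap $C_\omega(12/23)^\sigma 2^{-j\sigma}\ge\delta_j$. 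That inequality bounds $\bigl||\xi_1|-|\xi_2|\bigr|$, not $|\xi_1-\xi_2|$. Already $\xi_2=-\xi_1$ with $|\xi_1|=2^j$ gives $|\xi_1-\xi_2|=2^{j+1}$. So your suspicion is correct: the paper's proof does not establish the stated $\delta_j$ either. What this method actually proves is your fallback $\delta_j=C_\omega(3/16)^\sigma 2^{-j\sigma}$, the same as in Lemma~\ref{lem:keyThm1}.

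Your symmetry idea will not close the gap. Pairing $\xi$ with $-\xi$ only puts $\theta_\xi$ and $-\theta_\xi$ on opposite arcs, and the ``two points per $E_m$'' count already exploits that. Two points on the same arc can come from $\xi_1$ and $\xi_2$ with $\xi_2$ near $-\xi_1$. In that case the only control is $\Vert(\xi_1-\xi_2)\cdot\omega\Vert_{\Z}\ge C_\omega|\xi_1-\xi_2|^{-\sigma}$ with $|\xi_1-\xi_2|$ close to $\frac{16}{3}2^j$.

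You can keep $(3/8)^\sigma$ by splitting $\mathcal{A}_j$ into $K(n)$ pieces of diameter at most $\frac83 2^j$ (for $n=1$, $K=2$: positive and negative $\xi$). But then each piece may put two points in every $E_m$, so $C_p$ becomes $K^{1/p}C_p$. This method does not give both stated constants at once.

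None of this matters downstream. In \eqref{eq:discreteAA} the factor $(8/3)^\sigma$ becomes $(16/3)^\sigma$ (or $C_p$ grows), and the rates in Proposition~\ref{prop:DiscreteBesov} and Corollary~\ref{cor:discreteRate} are unchanged.

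A similar constant-level caveat applies to your reduction to the circle form. For the optimal $\xi_0$ one has $|\xi_0|\le|\omega||\xi|+\frac12$, so $(\omega,1)\in\mathcal{D}(\sigma,C_\omega,n+1)$ gives $\Vert\xi\cdot\omega\Vert_{\Z}\ge C_\omega(\frac32+|\omega|)^{-\sigma}|\xi|^{-\sigma}$. The factor to absorb into $C_\omega$ is therefore $(\frac32+|\omega|)^{-\sigma}$ rather than $|\omega|^{-1}$.
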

\begin{proof} Let $S_j = \{\xi\cdot \omega\;(\mathrm{mod}\;1): \xi\in \mathcal{A}_j\} \subset \mathbb{T}$. 
We observe that elements of $S_j$ are uniformly distributed on the circle $\T$. More precisely, if $x,y\in S_j$ and $x\neq y$, then $|x|\geq \delta_j, |y|\geq \delta_j$, and $|x-y|\geq \delta_j$. Indeed, if $x,y\in S_j$ then $x=\xi_1\cdot \omega \;(\mathrm{mod}\;1)$ and $y=\xi_2\cdot \omega \;(\mathrm{mod}\;1)$ for $\xi_1,\xi_2 \in \mathcal{A}_j$. We have $|\xi_1-\xi_2|\leq \left(\frac{8}{3} - \frac{3}{4}\right)\cdot 2^j = \frac{23}{12}\cdot 2^{j}$, thus
\begin{equation}\label{eq:equi-distri}
\begin{aligned}
	&\Vert x-y\Vert_{\Z} 
	= 
	\Vert (\xi_1-\xi_2)\cdot \omega\Vert_{\Z} \geq C_\omega|\xi_1-\xi_2|^{-\sigma} \geq  C_\omega \cdot \left(\frac{12}{23}\right)^\sigma\cdot  2^{-j\sigma} \geq \delta_j\\
	& \Vert x\Vert_\Z 
	= \Vert \xi_1\cdot \omega\Vert_\Z \geq C_\omega|\xi_1|^{-\sigma}	 \geq C_\omega\cdot\left(\frac{3}{8}\right)^\sigma\cdot 2^{-j\sigma} \geq \delta_j \\
	& \Vert y\Vert_\Z 
	= \Vert \xi_2\cdot \omega\Vert_\Z \geq C_\omega|\xi_2|^{-\sigma}	 \geq C_\omega\cdot\left(\frac{3}{8}\right)^\sigma\cdot 2^{-j\sigma} \geq \delta_j. 
\end{aligned}
\end{equation}
For $m=1,2,\ldots$, we define
\begin{equation*}
	E_m = \{x\in \T: m\delta_j \leq \Vert x\Vert _\Z < (m+1)\delta_j\}. 
\end{equation*}
From \eqref{eq:equi-distri} we obtain that $\#(E_m\cap S_j) \leq 2$. Thus
\begin{align*}
	\left(\sum_{x\in \mathcal{S}_j} \frac{1}{\Vert x\Vert_\Z^p}\right)^{1/p} 
	\leq 
	\left(
		\sum_{m=1}^{\lfloor (2\delta_j)^{-1	} \rfloor} \#(E_m\cap S_j)\cdot \frac{1}{(m\delta_j)^p	}
	\right)^{1/p}
	\leq 	
	\left( 2\sum_{m=1}^\infty \frac{1}{(m\delta_j)^p} \right)^{1/p}
	= \frac{1}{\delta_j} \left(2 \sum_{m=1}^\infty \frac{1}{m^p}\right)^{1/p},
\end{align*}
where $\left\lfloor x \right\rfloor$ is the integer part of $x$, namely $\left\lfloor x \right\rfloor \leq x < \left\lfloor x \right\rfloor  + 1$. 
We obtain the conclusion.
\end{proof}

Replacing Lemma \ref{lem:keyEstimateDiscrete} by Lemma \ref{lem:keyThm1} from the continuous case, the proofs of the discrete case for Theorems \ref{thm:BesovHolderCounting} follow in the same way.

\begin{prop}[Discrete setting with Besov observables]\label{prop:DiscreteBesov}
Let $(\omega,1) \in \mathcal{D}(\sigma, C_\omega, n+1)$ and $x\in \T^n$. 
\begin{itemize}
\item[(i)] Let $p\in (1,2]$ and $1\leq q\leq \infty$ with $\frac{1}{q} + \frac{1}{q'} = 1$. 
If $f\in B^s_{p,1}(\T^n)$ with $p\in (1,2]$ and $s\geq \max \left\lbrace \sigma, n/p\right\rbrace $ then
\begin{equation}\label{eq:BesovADiscretep1Q}
	\left|\frac{1}{N}\sum_{\ell=0}^{N-1}  f(x+\ell\omega )- \int_{\T^n} f(y)\;dy \right|  
		\leq 
	\begin{cases}
	\begin{aligned}
		&\frac{C(\sigma, p)}{C_\omega} \Vert f\Vert_{{B^\sigma_{p,1}}(\T^n)} 	\frac{1}{N} &&  s=\sigma, \\
		&\frac{C(\sigma,s, p,q)}{C_\omega} \Vert f\Vert_{{B^\sigma_{p,q}}(\T^n)} \frac{1}{N} && s>\sigma, 1\leq q\leq \infty,
	\end{aligned}
	\end{cases}
\end{equation}      
where $C(\sigma, p)$ and $C(\sigma, s, p,q)$ are explicit positive constants. 
\item[(ii)] If $f\in B^s_{\infty,q}(\T^n)$ for $s>0$ and $1\leq q\leq \infty$ with $\frac{1}{q}+\frac{1}{q'} = 1$ then
        \begin{align}\label{eq:BesovBDiscrete}
            &
             \left|\frac{1}{N}\sum_{\ell=0}^{N-1}  f(x+\ell\omega )- \int_{\T^n} f(y)\;dy \right|  
            \leq 
            \frac{C(\sigma,s,q)}{C_\omega}\Vert f\Vert_{{B^s_{\infty,q}}(\T^n)} 
            \begin{cases}
                \begin{aligned}
                    & N^{-1}  && s > \sigma,  \\
                    & N^{-1}(\log N)^{1/q'}  && s = \sigma,  \\
                    & N^{-s/\sigma}  && s < \sigma. 
                \end{aligned}
            \end{cases}
        \end{align}
\end{itemize}
\end{prop}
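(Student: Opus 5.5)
We mirror the continuous proof (Propositions \ref{prop:thm1part1} and \ref{prop:thm1part2}), replacing Lemma \ref{lem:keyThm1} by Lemma \ref{lem:keyEstimateDiscrete}. First we reduce to $x=0$ and $\int_{\T^n}f=0$. The Besov assumptions ($s\ge n/p$ with $q=1$, $s>n/p$ otherwise, or $s>0$ with $p=\infty$) give $f\in B^0_{\infty,1}$ or $B^s_{\infty,\infty}$. Lemma \ref{lem:BesovHolderEmbed}(c) then lets us write $f=\sum_{j\ge-1}\Delta_jf$ uniformly, so the Birkhoff sum may be interchanged with the Littlewood--Paley sum. For each block we apply Lemma \ref{lem:estimateZ}(ii) termwise on its Fourier expansion. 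Since $\widehat f(0)=0$, this gives
\begin{equation*}
\Big|\sum_{\ell=0}^{N-1}\Delta_jf(\ell\omega)\Big|\le \frac{2}{\pi}\sum_{\xi\in\mathcal A_j}\frac{|\widehat{\Delta_jf}(\xi)|}{\Vert\xi\cdot\omega\Vert_{\Z}} .
\end{equation*}
The low block $\Delta_{-1}f$ has finitely many nonzero modes; each satisfies $\Vert\xi\cdot\omega\Vert_\Z\ge C_\omega|\xi|^{-\sigma}/|\omega|$, so it contributes $O(1)$ and is absorbed into the constant.

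Next, Hölder's inequality with exponents $p'$ and $p$, Hausdorff--Young (valid for $p\in(1,2]$), and Lemma \ref{lem:keyEstimateDiscrete} bound the right-hand side by $\frac{C_p}{\pi\delta_j}\Vert\Delta_jf\Vert_{L^p}\le \frac{C(\sigma,p)}{C_\omega}2^{j\sigma}\Vert\Delta_jf\Vert_{L^p}$. This is exactly the discrete counterpart of \eqref{eq:keyEstimateA}. The harmless factor $|\omega|$ from the equivalence of the Diophantine conditions is absorbed into $C_\omega$, as in the paper's convention.

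For part (i) we sum over $j$ and divide by $N$. When $s=\sigma$ the sum $\sum_j2^{j\sigma}\Vert\Delta_jf\Vert_{L^p}$ is precisely $\Vert f\Vert_{B^\sigma_{p,1}}$. When $s>\sigma$ we apply Hölder in $j$ with the geometric weight $2^{j(\sigma-s)}\in\ell^{q'}$, giving a bound by $\Vert f\Vert_{B^s_{p,q}}$.

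For part (ii) we take $p=2$ and use $\Vert\Delta_jf\Vert_{L^2}\le\Vert\Delta_jf\Vert_{L^\infty}$. Low blocks $j\le J$ are estimated via the $\delta_j$ bound, giving $\frac{1}{N}\sum_{j\le J}2^{j\sigma}\Vert\Delta_jf\Vert_{L^\infty}$. High blocks $j>J$ are estimated trivially, since the Birkhoff average of $\Delta_jf$ is at most $\Vert\Delta_jf\Vert_{L^\infty}$. Hölder in $j$ yields three regimes: $N^{-1}$ when $s>\sigma$; $N^{-1}(J+2)^{1/q'}+2^{-Js}$ when $s=\sigma$; and $N^{-1}2^{J(\sigma-s)}+2^{-Js}$ when $s<\sigma$. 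Choosing $J=\lfloor\log_2 N/\sigma\rfloor$ balances these and gives $N^{-1}(\log N)^{1/q'}$ and $N^{-s/\sigma}$, respectively.

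The only genuinely new point, and the step to check carefully, is that the circle separation in Lemma \ref{lem:keyEstimateDiscrete} plays the role of the line separation in the continuous case. The pairwise difference $(\xi_1-\xi_2)\cdot\omega$ must be taken modulo $1$, which is legitimate because $(\omega,1)\in\mathcal D(\sigma,C_\omega,n+1)$ controls $\Vert k\cdot\omega\Vert_\Z$. Once that lemma holds, everything else is identical bookkeeping with the same constants.
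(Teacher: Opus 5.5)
Your proposal is correct and follows the paper's proof essentially step for step. Both bound each block $\Delta_jf$ through Lemma \ref{lem:estimateZ}(ii), then apply H\"older/Hausdorff--Young together with Lemma \ref{lem:keyEstimateDiscrete}, and finish with the same dyadic summation and cutoff $J\approx\sigma^{-1}\log_2 N$ as in the continuous case. On the step you flagged: two frequencies in $\mathcal A_j$ can be nearly antipodal, so $|\xi_1-\xi_2|$ can reach $\frac{16}{3}2^j$ rather than the $\frac{23}{12}2^j$ written in the paper's lemma, and $\delta_j$ should carry the factor $(3/16)^\sigma$ as in Lemma \ref{lem:keyThm1}; this changes only the constants.
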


\begin{proof}
Without loss of generality, we assume $\int_{\T^n} f(y)\;dy = 0$. In each cases, we can verify that $f = \sum_{j=-1}^\infty \Delta_j f$ absolutely and uniformly in $\T^n$. Let $\mathcal{A}_j$ be defined in \eqref{eq:annulusZ}.
For each fixed $j$, by Lemma \ref{lem:estimateZ} we have
\begin{equation}\label{eq:discreteAA}
\begin{aligned}
	&\left| \sum_{\ell=0}^{N-1} \Delta_j f(x+\ell \omega) \right|
	= 
	\left| \sum_{\xi\in \mathcal{A}_j} \widehat{\Delta_j f}(\xi)\sum_{\ell=0}^{N-1} e^{2\pi i\xi \cdot(x+\ell \omega)} \right|\\
	&\qquad \leq \frac{2}{\pi}\sum_{\xi\in \mathcal{A}_j} |\widehat{\Delta_j f}(\xi)| \cdot \frac{1}{\Vert \xi\cdot\omega\Vert_{\Z	}}
	\leq \frac{2}{\pi} \Vert \Delta_j f\Vert_{L^p(\T^n)}\cdot \frac{C_p}{\delta_j} \leq \frac{2C_p}{\pi C_\omega}\left(\frac{8}{3}\right)^\sigma \cdot 2^{j\sigma} \Vert \Delta_j f\Vert_{L^p(\T^n)}
\end{aligned}
\end{equation}
thanks to H\"older inequality with $\frac{1}{p} + \frac{1}{p'} = 1$ 
and Lemma \ref{lem:keyEstimateDiscrete}.
From \eqref{eq:discreteAA} the desired results \eqref{eq:BesovADiscretep1Q}, \eqref{eq:BesovBDiscrete}, and \eqref{eq:corHolderDiscrete} follow by the same argument as in the proof of Theorem \ref{thm:BesovHolderCounting} for the continuous case.
\end{proof}

\section{Application: Homogenization Rate of Hamilton--Jacobi equations}
\label{Sec:AppHomogenization}

We consider $H(x,\xi) = \frac{1}{2}|\xi|^2 - V(x)$ for $(x,\xi)\in \R^2$, where
\begin{equation*}
    V(x) = f(\omega x), \quad x\in \R, \qquad f\in C(\T^n),\quad  \min_{\T^n} f = 0 . 
\end{equation*}
The following settings are from \cite{hu_polynomial_2025}, which we recall briefly for completeness.
We can assume $x=0$ without loss of generality. By optimal control theory (see \cite{bardi_optimal_1997,le_dynamical_2017, tran_hamilton-jacobi_2021}), the solution to \eqref{eq:intro:Ceps} can be written as
\begin{equation*}
    u^\varepsilon(0,t) = \inf 
    \left\lbrace 
    \varepsilon\int_0^{\varepsilon^{-1}t } 
    \left(
        \frac{|\dot{\eta}(s)|^2}{2} + V(\eta(s))
    \right)\;ds
     + 
     u_0(\varepsilon\eta(\varepsilon^{-1}t)):
    \varepsilon\eta(0) = 0, 
    \dot{\eta}\in 
    L^1([0,\varepsilon^{-1}t])
    \right\rbrace.
\end{equation*}
Let $\mathcal{A} =\big\lbrace \eta\in \mathrm{AC}([0,\varepsilon^{-1}t]): \eta(0) = 0\big\rbrace$ where $\mathrm{AC}([a,b])$ denotes the set of absolutely continuous functions from $[a,b]$ to $\mathbb{R}$, and 
\begin{equation}\label{eq:A^eps}
    A^\varepsilon[\eta] = \varepsilon\int_0^{\varepsilon^{-1}t} \left(\frac{|\dot{\eta}(s)|^2}{2} +  V(\eta(s))\right)\;ds + u_0\big(\varepsilon \eta(\varepsilon^{-1}t)\big), \qquad \eta \in \mathcal{A}. 
\end{equation}
We have $u^\varepsilon(0,t) = \inf_{\eta\in \mathcal{A}} A^\varepsilon[\eta]$. 
Conservation of energy implies that, if $\eta$ is a minimizer then there exists $r\in [r_{\textrm{min}},+\infty)$ where $r_\textrm{min} = \min_{\mathbb{R}}V$ such that
\begin{equation}\label{eq:rate-conservation-energy}
    H\left(\eta(s),\dot{\eta}(s)\right) = \frac{|\dot{\eta}(s)|^2}{2} - V(\eta(s)) = r \qquad\text{for all}\; s\in (0,\varepsilon^{-1}t).
\end{equation}
The corresponding minimizer solves
\begin{equation}\label{eq:ode}
    \begin{cases}
    \begin{aligned}
        |\dot{\eta}(s)| &= \sqrt{2(r+V(\eta(s)))}, & & s\in (0,\varepsilon^{-1}t),\\
        \eta(0) &= 0. & &
    \end{aligned}
    \end{cases}
\end{equation}
For $r\in [r_{\text{min}},+\infty)$ we define 
\begin{equation*}
    \mathcal{A}_r = \left\lbrace \eta\in \mathcal{A}\;\text{is a minimizer of}\;u^\varepsilon(0,t)\;\text{with}\; H(\eta(s),\dot{\eta}(s)) = r\;\text{in}\;(0,\varepsilon^{-1}t)\right\rbrace.
\end{equation*}
Then 
\begin{equation}\label{eq:u-eps-01}
    u^\varepsilon(0,t) = \inf_{r} 
    \left\lbrace 
    \inf_{\eta \in \mathcal{A}_r} A^\varepsilon[\eta] 
    \right\rbrace.
\end{equation}
Furthermore, by \cite[Lemma 4.9]{hu_polynomial_2025} we can ignore the value of $r\geq r_0$ in \eqref{eq:u-eps-01} for some $r_0>0$ depending only on $\|f\|_{L^\infty(\T^n)}$ and $\|u_0\|_{W^{1,1}(\R)}$. 
Denote 
\begin{equation}\label{eq:p0}
    p_r := \int_{\T^n} \sqrt{2(r + f(x))}\;dx, \qquad r\geq 0.
\end{equation}
If $|p|\geq |p_0|$ (corresponding to $\overline{H}(p)\geq 0$) then the cell problem \eqref{eq:CPdelta} has an exact sublinear corrector $v_p$, defined by
\begin{align}\label{eq:vp}
    v_p(t) = \int_0^t \sqrt{2(\mu + f(\omega t))}\;dt - pt, \qquad t\in \R 
\end{align}
where $\mu = \overline{H}(p)$. The region $[-p_0,p_0]$ where $\overline{H} = 0$ is called the \emph{flat part} \cite{le_dynamical_2017,tran_hamilton-jacobi_2021}. If $\overline{H}$ is differentiable at $p$, it is known that \cite[Lemma 3.2]{hu_polynomial_2025}
\begin{align*}
	\frac{1}{\overline{H}'(p)} = \int_{\T^n} \Big( 2\left(\overline{H}(p) + f(x)\right)\Big)^{-1/2}\;dx. 
\end{align*}
Let $\overline L(v):=\max_{p\in\R} \big( v\cdot p -\overline H(p)\big)$.
Due to the Hopf-Lax fomula, we get 
\begin{equation*}
	u(0,t)=\inf_{x\in\R}\Big(\overline L\left(-\tfrac{x}{t}\right)+u_0(x)\Big). 
\end{equation*}

The lower bound for
\(u^\varepsilon-u\) depends on the decay of $v_p(s)/s$ as $s\to \infty$, where \(v_p\) is defined in \eqref{eq:vp}. The upper bound additionally
requires the convergence of \(\eta(s)/s\) to its rotation vector; see
\eqref{eq:bound-master-1} and \eqref{eq:bound-master-2}. Related
subsequential convergence results for $\eta(s)/s$ appear in
\cite{EWeinanAubryMatherCPAM1999,EvansGomesARMAPart12001,fathi_book,
gomes_CalVarPDE2002,tran_yu_WeakKAM2022}. Let us recap the idea developed in \cite{tu_2018_rate_asymptotic} and \cite{hu_polynomial_2025}. Let $\eta_r$ be a minimizer for $r = \overline{H}(p)$.
\begin{itemize}
\item[(i)] (Lower bound \cite[Proposition 4.10]{hu_polynomial_2025}) Using, in part, the Hopf--Lax formula, we obtain
\begin{equation}\label{eq:Action}
    A^\varepsilon[\eta_r] \geq u(0,t) + \inf_{|p|\geq |p_0|} \varepsilon v_p(\eta_r(\varepsilon^{-1}t)).
\end{equation}
Together with \eqref{eq:u-eps-01} we deduce a lower bound estimate fro $u^\varepsilon(0,t)- u(0,t)$, which depends on the decay rate of $v_p(s)/s$. 

\item[(ii)] (Upper bound \cite[Proposition 4.11]{hu_polynomial_2025}) We have $u^\varepsilon(0,t) \to u(0,t)$ (see \cite{Ishii_almost_periodic_1999, tran_hamilton-jacobi_2021}) as $\varepsilon\to 0^+$, and also from \eqref{eq:u-eps-01}:
\begin{equation}\label{eq:uepsfor}
	u^\varepsilon(0,t) = \min
    \left\lbrace 
        \inf_{r\geq 0}A^\varepsilon[\eta_r],  
        \inf_{r\leq 0}A^\varepsilon[\eta_r]
    \right\rbrace. 
\end{equation}
If we can find a quantity \(I_r\), independent of \(\varepsilon\), such that
\begin{equation*}
\left|
\min\left\{
\inf_{r\geq 0} A^\varepsilon[\eta_r],
\inf_{r\leq 0} A^\varepsilon[\eta_r]
\right\}
-I_r
\right|
\leq C\varepsilon^\theta,
\end{equation*}
then $|u^\varepsilon(0,t)-I_r|\leq C\varepsilon^\theta$. Letting \(\varepsilon\to0^+\), we obtain \(u(0,t)=I_r\), and hence the
upper-bound rate is attained. We will show that $I_r=\min\{I_r^-,I_r^+\}$ where \(I_r^-\) and \(I_r^+\) correspond to the large time average of the minimization problem over
\(r\leq 0\) and \(r\geq 0\), respectively.

\end{itemize}

\begin{proof}[Proof of Theorem \ref{thm:homogenization}] 
We track the dependence on \(t\), rather than
normalizing \(t=1\) as in \cite{hu_polynomial_2025}. This yields a sharper upper bound in some cases where the estimate remain uniform as $t\to \infty$.

\medskip

\noindent 
\underline{{\bf Part 1. The lower bound of $u^\varepsilon-u$.}}
We note that 
\begin{align*}
    \frac{v_p(t)}{t} = \frac{1}{t}\int_0^t \sqrt{2(\mu + f(\omega t))}\;dt - \int_{\T^n}\sqrt{2(\mu + f(x))}\;dx.
\end{align*}
By Theorem \ref{thm:BesovHolderCounting}, the decay rate of $\frac{v_p(t)}{t}$ depends on the regularity in Besov norm of 
\begin{equation}\label{eq:fmu}
    f_\mu(x) = \big( 2(\mu + f(x))\big)^{1/2}, \qquad x\in \T^n, \mu \geq 0. 
\end{equation}
\begin{itemize}
\item If $f\in W^{1,1}(\T^n)$, then $f_\mu \in C^{0,\frac{1}{2}}(\T^n)$ uniformly in $0\leq \mu \leq \mu_0$. By Theorem \ref{thm:BesovHolderCounting} we obtain
\begin{align}\label{eq:ratevp}
    \left|\frac{v_p(s)}{s}\right| \leq \frac{C}{s^{\frac{1}{2\sigma}}}. 
\end{align}
There exists a constant $C_0$ (corresponding to $\mu_0$) such that, for $|p_0|\leq |p|\leq C_0$ then
    \begin{align}\label{eq:S1}
        |\varepsilon v_p(\eta(\tfrac{t}{\varepsilon}))|
        & =
        |\varepsilon \eta(\tfrac{t}{\varepsilon})|\cdot \left|
            \frac{v_p(\eta(\tfrac{t}{\varepsilon}))}{\eta(\tfrac{t}{\varepsilon})}
        \right| \leq |\varepsilon \eta(\tfrac{t}{\varepsilon})|\cdot 
        \frac{C}{|\eta(\tfrac{t}{\varepsilon})|^{\frac{1}{2\sigma}}} \leq C \cdot t^{1-\frac{1}{2\sigma}}\cdot \varepsilon^\frac{1}{2\sigma}.
    \end{align}
Here we use the fact that $|s\eta(s^{-1})|$ is bounded if $\eta$ is a minimizer for $0\leq r\leq C$, which follows from \eqref{eq:ode}. 

\item Similarly, suppose that \(f\in C^2(\T^n)\) has only non-degenerate minima. By choosing suitable coordinates on $\T^n$ and using the following  Lemma~\ref{lem:keyBgamma}, we may reduce to the case of a unique non-degenerate minimum \(x_0\), with \(f(x_0)=0\). 
By the Morse Lemma, locally we have $f(x)\approx C|x-x_0|^2$ where $x_0$ is the minimum point, thus by Lemma \ref{lem:keyBgamma} we obtain $f_\mu \in B^1_{\infty,\infty}(\T^n)$ uniformly for $0\leq \mu\leq \mu_0$. By Theorem \ref{thm:BesovHolderCounting} we 
obtain
\begin{align}\label{eq:ratevplog}
    \left|\frac{v_p(s)}{s}\right| \leq \frac{C\;\log(s)}{s^{\frac{1}{\sigma}}}. 
\end{align}
Therefore, for $|p_0|\leq |p|\leq C$ we deduce that
    \begin{align}\label{eq:S2}
        |\varepsilon v_p(\eta(\tfrac{t}{\varepsilon})|
        &=
        |\varepsilon \eta(\tfrac{t}{\varepsilon})|\cdot \left|
            \frac{v_p(\eta(\tfrac{t}{\varepsilon}))}{\eta(\tfrac{t}{\varepsilon})}
        \right|  
        \leq  |\varepsilon \eta(\tfrac{t}{\varepsilon})|\cdot 
        \frac{C\;\log(\eta(\tfrac{t}{\varepsilon}))}{|\eta(\tfrac{t}{\varepsilon})|^{\frac{1}{\sigma}}} 
        \leq C \cdot t^{1-\frac{1}{\sigma}}\cdot \varepsilon^\frac{1}{\sigma}\cdot \big(\log t + |\log \varepsilon|\big) . 
    \end{align}
\end{itemize}
From \eqref{eq:S1}, \eqref{eq:S2}, and \eqref{eq:Action}, by taking the infimum in view of \eqref{eq:u-eps-01} we obtain 
\begin{align*}
    u^\varepsilon(0,t) - u(0,t)\geq -C
    \begin{cases}
    \begin{aligned}
        &t^{1-\frac{1}{2\sigma}} \varepsilon^{\frac{1}{2\sigma}}   && f\in W^{1,1}(\T^n)\\ 
        &t^{1-\frac{1}{\sigma}} \varepsilon^{\frac{1}{\sigma}} \big( \log t + |\log(\varepsilon)| \big)   &&  f\in C^2(\T^n)\;\text{has only non-degenerate minima}. 
    \end{aligned}
    \end{cases}
\end{align*}
Taking $t=1$ we obtain the lower bound \eqref{eq:generalQuasi}. \medskip

\noindent 
\underline{{\bf Part 2. The upper bound of $u^\varepsilon-u$.}}
For $|p_0|\leq |p|\leq C_0$, let $v_p$ be defined by \eqref{eq:vp} and $\eta:[0,\infty) \to \mathbb{R}$ be a characteristic that corresponds to $p$. 
Let $\tilde{p}, p \geq p_0$ and $\tilde{\mu} = \overline{H}(\tilde{p}), \mu = \overline{H}(p) \geq 0 = \overline{H}(p_0)$. 
We have
\begin{equation}\label{eq:bound-eta}
    \left|\frac{\eta(t)}{t}\right| \leq \max_{s\in (0,\infty)} |\dot{\eta}(s)| \leq C, \qquad\text{where}\qquad 
    C=\sqrt{2}\left(\max\{\mu, \tilde{\mu}\} + \Vert f\Vert_{L^\infty}\right)^{1/2}. 
\end{equation}
Let $v_p$ and $v_{\tilde{p}}$ be the correctors to the cell problem for $p$ and $\tilde{p}$, respectively with $v_p(0) = v_{\tilde{p}}(0) = 0$. 
Since $p,\tilde{p} > p_0$, we can select 
\begin{equation}\label{eq:choicepp}
	\tilde{p} = p + \omega(t)\;\mathrm{sign}\left(\frac{\eta(t)}{t} - \overline{H}'(p)\right)
\end{equation}
where $\omega(t) \to 0^+$ as $t\to +\infty$, to be chosen. 
We have $\dot{\eta}(s) = D_pH(\eta(s), p+v'_p(\eta(s)))$, therefore, the equality in Fenchel-Young inequality holds:
\begin{align*}
    \int_0^{t} \Big( L(\eta(s), \dot{\eta}(s)) + \overline{H}(\tilde{p})\Big)\;ds &\geq \int_0^t\dot{\eta}(s)\big(\tilde{p} + Dv_{\tilde{p}}(\eta(s))\big)ds = \tilde{p} \eta(t) + v_{\tilde{p}}(\eta(t)),\\
    \int_0^{t} \Big( L(\eta(s), \dot{\eta}(s)) + \overline{H}(p)\Big)\;ds &= \int_0^t \dot{\eta}(s)\big(p + Dv_p(\eta(s))\big)ds = p \eta(t) + v_{p}(\eta(t)).
\end{align*}
Subtracting these equations with $\overline{H}'(p)(\tilde{p} - p)$, using the fact that $\overline{H}$ is $C^{1,\beta}(\R)$, convex, and also \eqref{eq:choicepp}, we have
\begin{align}\label{eq:estimate-p-tilde-p}
	C|\tilde{p}-p|^{1+\beta} 
		&\geq 
    \overline{H}(\tilde{p}) - \overline{H}(p) - \overline{H}'(p)(\tilde{p} - p) \nonumber \\
    		&\geq \left(\frac{\eta(t)}{t}-\overline{H}'(p)\right)(\tilde{p}-p) + \frac{v_{\tilde{p}}(\eta(t))}{t} - \frac{v_{p}(\eta(t))}{t}.
\end{align}
\begin{itemize}
\item If $f\in W^{1,1}(\T^n)$ then thanks to \eqref{eq:ratevp} and \eqref{eq:bound-eta} we have
\begin{equation*}
    \left|\frac{v_{p}(\eta(t))}{t}\right| 
    +
    \left|\frac{v_{\tilde{p}}(\eta(t))}{t}\right|  
    \leq 
    \frac{\eta(t)}{t} 
    \left(\frac{(C_p+C_{\tilde{p}})C_0}{\eta(t)^{\frac{1}{2\sigma}}}\right) 
    \leq \frac{C}{t^{\frac{1}{2\sigma}}}.
\end{equation*}
By \eqref{eq:estimate-p-tilde-p} we have
\begin{equation}\label{eq:bound-master-1}
    \left|\frac{\eta(t)}{t} - \overline{H}'(p)\right| 
    \leq C|\tilde{p} - p|^{1+\beta} + \frac{C}{t^{\frac{1}{2\sigma}}} \leq C 
      \left(\frac{1}{|t|}\right)^{\frac{1}{2\sigma}\cdot\frac{\beta}{1+\beta}} 
\end{equation}
by choosing $|\tilde{p} - p| = \omega(t)$ appropriately. 
\item If $f\in C^2(\T^n)\) has only non-degenerate minima, then thanks to \eqref{eq:ratevplog} and \eqref{eq:bound-eta}, we have
\begin{equation*}
    \left|\frac{v_{p}(\eta(t))}{t}\right| 
    +
    \left|\frac{v_{\tilde{p}}(\eta(t))}{t}\right|  
    \leq 
    \frac{\eta(t)}{t} 
    \left(\frac{(C_p+C_{\tilde{p}})C_0}{\eta(t)^{\frac{1}{\sigma}}}\log(|\eta(t)|)\right) 
    \leq \frac{C}{t^{\frac{1}{\sigma}}}|\log(t)|.
\end{equation*}
By \eqref{eq:estimate-p-tilde-p} we have
\begin{equation}\label{eq:bound-master-2}
    \left|\frac{\eta(t)}{t} - \overline{H}'(p)\right| 
    \leq C|\tilde{p} - p|^{1+\beta} + \frac{C}{t^{\frac{1}{2\sigma}}} \leq C 
      \left(\frac{1}{|t|}\right)^{\frac{1}{\sigma}\cdot\frac{\beta}{1+\beta}}|\log(t)| 
\end{equation}
by choosing $|\tilde{p} - p| = \omega(t)$ appropriately. 
\end{itemize}
Since the constant $C$ in the bound is independent of $p$ as long as $|p_0|\leq |p|\leq C_0$, and $\overline{H}'(p)$ is continuous as $p\to p_0^+$, we can deduce the same results \eqref{eq:bound-master-1} and \eqref{eq:bound-master-2} for $|p|\geq |p_0|$. 
We utilize \eqref{eq:bound-master-1} and \eqref{eq:bound-master-2} to obtain the upper bound for $u^\varepsilon(0,t) - u(0,t)$.

\smallskip
\underline{\textit{Step 1. Positive energies}} If $\eta_r \in \mathcal{A}_r$, then for $r\geq 0$ $\eta_r$ can only solve either 
\begin{equation}\label{eq:ode-positive}
\begin{cases}
\begin{aligned}
    \dot{\eta}(s) &= +\sqrt{2(r-V(\eta(s)))},  \qquad s\in (0,\infty),\\
    \eta(0) &= 0, 
\end{aligned}
\end{cases}
\end{equation}
or 
\begin{equation}\label{eq:ode-negative}
\begin{cases}
\begin{aligned}
    \dot{\eta}(s) &= -\sqrt{2(r-V(\eta(s)))},   \qquad s\in (0,\infty),\\
    \eta(0) &= 0,
\end{aligned}
\end{cases}
\end{equation} 
It suffices to consider $\eta_r \geq 0$ (the other case is similar). For $r\geq 0$ we have $r = \overline{H}(p_r)$ where $p_r$ is defined by \eqref{eq:p0}. Similar to \cite[Lemma 4.8]{hu_polynomial_2025}, the action \eqref{eq:A^eps} can be written as
\begin{align}\label{eq:Ar}
	A^\varepsilon[\eta_r] 
	= -rt + 
	\varepsilon \int_0^{\eta\left(\tfrac{t}{\varepsilon}\right)} \sqrt{2(r+V(x))}\;dx
	+ u_0\left(\eta_r\left(\tfrac{t}{\varepsilon}\right)\right).
\end{align}
We define
\begin{align*}
	    A^{\pm}_r: &= -rt + t\overline{H}'(\pm p_r) \cdot (\pm p_r) + u_0\left(t\overline{H}'(\pm p_r)\right). 
\end{align*}
For $r\geq 0$ and $\eta_r\in \mathcal{A}_r$ with $\eta_r\geq 0$, we compare $A^\varepsilon[\eta_r]$ and $A^+_r$ using \eqref{eq:Ar} that
\begin{align*}
    \left|A^\varepsilon[\eta_r] - A^{+}_r\right| 
        &\leq |\varepsilon\eta_r\left(\tfrac{t}{\varepsilon}\right)|\cdot \left|\frac{1}{\eta_r(\varepsilon^{-1}t)}\int_0^{\eta_r(\varepsilon^{-1}t)}f_r(\omega x)\;dx - \int_{\T^n}f_r(x)\;dx \right|  \\
        & \qquad\qquad \qquad \qquad\qquad\qquad + \left|\varepsilon \eta_r\left(\tfrac{t}{\varepsilon}\right)  - t\overline{H}'(p_r)\right| \cdot \left(|p_r| + \mathrm{Lip}(u_0)\right).
\end{align*}

\begin{itemize}
\item If $f\in W^{1,1}(\T^n)$ then by \eqref{eq:bound-master-1} we have
\begin{align*}
	    \left|
        \varepsilon \eta_r(\tfrac{t}{\varepsilon}) - 	
        	t\overline{H}'(p_r)
    \right| \leq Ct^{1-\frac{\beta}{2\sigma(1+\beta)}} \varepsilon ^\frac{\beta}{2\sigma(1+\beta)}.
\end{align*}
Thanks to \eqref{eq:S1} we conclude that
\begin{align*}
	\left|A^\varepsilon[\eta_r] - A^{+}_r\right|  
	\leq 
	C t^{1-\frac{1}{2\sigma}}\cdot \varepsilon^\frac{1}{2\sigma} + Ct^{1-\frac{\beta}{2\sigma(1+\beta)}} \varepsilon ^\frac{\beta}{2\sigma(1+\beta)}.
\end{align*}

\item If $f\in C^2(\T^n)\) has only non-degenerate minima then by \eqref{eq:bound-master-2} we have
\begin{align*}
	    \left|
        \varepsilon \eta_r(\tfrac{t}{\varepsilon}) - 	
        	t\overline{H}'(p_r)
    \right| \leq Ct^{1-\frac{\beta}{\sigma(1+\beta)}} \varepsilon ^\frac{\beta}{\sigma(1+\beta)}|\log\left(\tfrac{t}{\varepsilon}\right)|.
\end{align*}
Thanks to \eqref{eq:S2} we conclude that
\begin{align*}
	\left|A^\varepsilon[\eta_r] - A^{+}_r\right|  
	\leq 
	C \cdot t^{1-\frac{1}{\sigma}}\cdot \varepsilon^\frac{1}{\sigma}\cdot \big(|\log t| + |\log \varepsilon|\big) 
	+ 
	Ct^{1-\frac{\beta}{\sigma(1+\beta)}} \varepsilon ^\frac{\beta}{\sigma(1+\beta)}|\log\left(\tfrac{t}{\varepsilon}\right)|.
\end{align*}
\end{itemize}
Here $C$ also depends on $r_0$, due to the restriction $|r|\leq r_0$. 
Similarly, if $r\geq 0$ and $\eta_r \in \mathcal{A}_r$ with $\eta_r\leq 0$, we compare $A^\varepsilon[\eta_r]$ with $A_r^-$ and we can obtain the same rates. \medskip

\underline{\textit{Step 2. Negative energies}}
If $r<0$, from \eqref{eq:ode-positive} and \eqref{eq:ode-negative} we have 
\begin{equation*}
	\eta_0^-(s) \leq \eta_r(s) \leq \eta_0^+(s), \qquad s\in \left(0,\tfrac{t}{\varepsilon}\right),
\end{equation*}
where $\eta_0^\pm$ are solutions to \eqref{eq:ode-positive}, \eqref{eq:ode-negative} with $r=0$, respectively. In particular, we deduce that 
\begin{equation*}
    0 \leq 
    |\varepsilon \eta_r\left(\tfrac{t}{\varepsilon}\right)|
    \leq 
    \max 
    \left\lbrace
    |\varepsilon \eta^-_0\left(\tfrac{t}{\varepsilon}\right) - 
    		t\overline{H}'(-p_0)|,
    |\varepsilon \eta^+_0\left(\tfrac{t}{\varepsilon}\right) - 
	    t\overline{H}'(+p_0)|
    \right\rbrace.
\end{equation*}
We have $\overline{H}'(\pm p_0) = 0$. From \eqref{eq:A^eps}, \eqref{eq:rate-conservation-energy} and $L(x,v)\geq 0$ for all $(x,v)$ due to the assumption, we have 
\begin{equation*}
    \inf_{r\leq 0} A^\varepsilon[\eta_r] 
    \geq 
    u_0(\varepsilon \eta_r\left(\tfrac{t}{\varepsilon}\right)) 
    \geq 
    u_0(0) - C|\varepsilon\eta_0^-\left(\tfrac{t}{\varepsilon}\right)|.
\end{equation*}
On the other hand, we have
\begin{align*}
     \inf_{r\leq 0} A^\varepsilon[\eta_r] 
        &\leq A^\varepsilon[\eta_0^+] = \varepsilon \int_0^{\eta_0^+(\varepsilon^{-1}t)} \sqrt{-2V(x)}\;dx + u_0(\varepsilon\eta_0^+(\varepsilon^{-1})) 
        \leq u_0(0) + C|\varepsilon \eta^+_0\left(\tfrac{t}{\varepsilon}\right)| .
\end{align*}
We conclude that 
\begin{itemize}
\item If $f\in W^{1,1}(\T^n)$ then 
\begin{equation*}
    \left|
    \inf_{r\leq 0} A^\varepsilon[\eta_r] - u_0(0)
    \right| 
    \leq C|\varepsilon\eta_0\left(\tfrac{t}{\varepsilon}\right)|
    \leq 
    C \left(
    		t^{1-\frac{1}{2\sigma}} \varepsilon^\frac{1}{2\sigma} 
    		+ 
    		t^{1-\frac{\beta}{2\sigma(1+\beta)}} \varepsilon ^\frac{\beta}{2\sigma(1+\beta)}
    		\right).
\end{equation*}

\item If $f\in C^2(\T^n)\) has only non-degenerate minima then 
\begin{equation*}
    \left|
    \inf_{r\leq 0} A^\varepsilon[\eta_r] - u_0(0)
    \right| 
    \leq 
    C \left( 
    		t^{1-\frac{1}{\sigma}} \varepsilon^\frac{1}{\sigma}	
	    +
    	t^{1-\frac{\beta}{\sigma(1+\beta)}} \varepsilon ^\frac{\beta}{\sigma(1+\beta)} 
    	\right)
	\big(|\log t| + |\log \varepsilon|\big) 
\end{equation*}
\end{itemize}
From step 1, step 2, we obtain
\begin{itemize}
\item If $f\in W^{1,1}(\T^n)$ then 
\begin{align*}
    & \left|
    \min
    \left\lbrace 
        \inf_{r\geq 0}A^\varepsilon[\eta_r],  
        \inf_{r\leq 0}A^\varepsilon[\eta_r]
    \right\rbrace 
    - 
    \min 
    \left\lbrace  
        \inf_{r\geq 0} A^{\pm}_r,\inf_{r\leq 0} u_0(0)
    \right\rbrace 
    \right|  \\
    &\qquad \qquad  \leq
    C \left(
    		t^{1-\frac{1}{2\sigma}} \varepsilon^\frac{1}{2\sigma} 
    		+ 
    		t^{1-\frac{\beta}{2\sigma(1+\beta)}} \varepsilon ^\frac{\beta}{2\sigma(1+\beta)}
    		\right) 
    		\leq
    	    C  
    	    \begin{cases}
    	    	t              & \qquad 0 < t \leq \varepsilon, \\
    	    	t^{1-\frac{\beta}{2\sigma(1+\beta)}} \varepsilon ^\frac{\beta}{2\sigma(1+\beta)}
			      	    	& \qquad t>\varepsilon.     	    	
    	    \end{cases}
\end{align*}

\item If $f\in C^2(\T^n)\) has only non-degenerate minima then 
\begin{align*}
    & \left|
    \min
    \left\lbrace 
        \inf_{r\geq 0}A^\varepsilon[\eta_r],  
        \inf_{r\leq 0}A^\varepsilon[\eta_r]
    \right\rbrace 
    - 
    \min 
    \left\lbrace  
        \inf_{r\geq 0} A^{\pm}_r,\inf_{r\leq 0} u_0(0)
    \right\rbrace 
    \right|  \\
    &\qquad \qquad \qquad  \leq
    C \left( 
    		t^{1-\frac{1}{\sigma}} \varepsilon^\frac{1}{\sigma}	
	    +
    	t^{1-\frac{\beta}{\sigma(1+\beta)}} \varepsilon ^\frac{\beta}{\sigma(1+\beta)} 
    	\right)
	\big(|\log t| + |\log \varepsilon|\big) \\
    &\qquad \qquad \qquad  \leq
    	    C  
    	    \begin{cases}
    	    	t              & \quad 0 < t \leq \varepsilon, \\
    	    	t^{1-\frac{\beta}{\sigma(1+\beta)}} \varepsilon ^\frac{\beta}{\sigma(1+\beta)} \left(1+\log\frac{t}{\varepsilon}\right)
			      	    	& \quad t>\varepsilon.     	    	
    	    \end{cases}
\end{align*}
\end{itemize}
From \eqref{eq:uepsfor} and $u^\varepsilon\to u$ as $\varepsilon\to 0^+$, we obtain the desired conclusion.

Finally, for every \(\delta>0\), the set of \(\omega\in\mathbb R^n\) with Diophantine index \(n-1+\delta\) has full measure; see \cite{schmidt_diophantine_1980}. Therefore, \eqref{eq:generalQuasi} yields the desired conclusion for a.e. \(\omega\in\mathbb{R}^n\), up to an arbitrary \(\delta\)-loss.
\end{proof}

\begin{lem}
\label{lem:keyBgamma}
Let $\gamma, \mu_0>0$ and $\eta\in C_c^\infty(\R^n)$. The function $u(x)=\eta(x)(\mu+|x|^2)^{\gamma/2} \in B^\gamma_{\infty, \infty}(\R^n)$ uniformly for $0\leq \mu\leq \mu_0$. 
\end{lem}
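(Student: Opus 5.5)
We prove the claim with the difference characterization of $B^\gamma_{\infty,\infty}(\R^n)$ (see \cite[Chapter~V]{stein_singular_1970}). For $\gamma>0$ and an integer $M>\gamma$,
\[
\|u\|_{B^\gamma_{\infty,\infty}}\simeq \|u\|_{L^\infty}+\sup_{h\neq 0}|h|^{-\gamma}\|\Delta_h^M u\|_{L^\infty},
\]
where $\Delta_h^M$ denotes the $M$-th order finite difference. This is equivalent to the Littlewood--Paley definition, with constants depending only on $n,\gamma,M$. We will also use the standard pointwise bound $|\Delta_h^M u(x)|\le |h|^M\sup_{|y-x|\le M|h|}|D^Mu(y)|$ for $u\in C^M$ near the segment. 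Set $g_\mu(x)=(\mu+|x|^2)^{\gamma/2}$, so that $u=\eta g_\mu$. The task is to bound $|\Delta_h^M u(x)|\le C|h|^\gamma$ with $C$ independent of $\mu\in[0,\mu_0]$. The $L^\infty$ bound is immediate: on $\operatorname{supp}\eta$ we have $|u|\le \|\eta\|_\infty(\mu_0+R^2)^{\gamma/2}$.

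\textbf{Step 1: derivative bounds.} We first prove the uniform scaling estimate
\[
|D^k g_\mu(x)|\le C_k(\mu+|x|^2)^{(\gamma-k)/2}\le C_k|x|^{\gamma-k},\qquad x\neq0,
\]
with $C_k$ independent of $\mu$ (the last inequality uses $k\ge\gamma$). The estimate follows by induction: each derivative of $(\mu+|x|^2)^{a}$ is a finite sum of terms $x^\beta(\mu+|x|^2)^{a-j}$ with $|\beta|\le j$, and $|x|\le(\mu+|x|^2)^{1/2}$. By Leibniz's rule the same bound, up to lower order terms, holds for $D^k(\eta g_\mu)$ on the compact support of $\eta$. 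The lower order terms carry extra powers of $|x|$, which are bounded on $\operatorname{supp}\eta$.

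\textbf{Step 2: two regimes.} Fix $x$ and $h$.

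\emph{Far regime}, $|x|\ge 2M|h|$. Every point $y$ on the difference stencil satisfies $|y|\ge|x|/2$. Step 1 with $k=M$ then gives
\[
|\Delta_h^M u(x)|\le C|h|^M|x|^{\gamma-M}\le C|h|^\gamma,
\]
since $\gamma-M<0$ and $|x|\gtrsim|h|$. This holds uniformly in $\mu$, provided $|h|\le1$; for $|h|>1$ the $L^\infty$ bound suffices.

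\emph{Near regime}, $|x|<2M|h|$. Here we use the expansion $\Delta_h^M u(x)=\sum_{i}\binom{M}{i}(-1)^{M-i}u(x+ih)$ term by term. If $\gamma\le1$, we subtract the constant $u(0)$, which does not change the $M$-th difference since $M\ge1$. Then $\gamma$-Hölder continuity of $g_\mu$, uniform in $\mu$, gives $|u(x+ih)-u(0)|\le C|x+ih|^\gamma\le C|h|^\gamma$. That uniform Hölder bound holds because $t\mapsto(\mu+t^2)^{\gamma/2}$ is $\gamma$-Hölder with constant independent of $\mu$, by the elementary inequality $(a+b)^{\gamma/2}\le a^{\gamma/2}+b^{\gamma/2}$.

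If $\gamma>1$, we instead subtract the Taylor polynomial $P$ of $u$ at $0$ of degree $m=\lceil\gamma\rceil-1<M$. Again $\Delta_h^M P=0$. The task reduces to the remainder estimate $|u(y)-P(y)|\le C|y|^\gamma$, uniformly in $\mu$.

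\textbf{Main obstacle.} The hardest step is this remainder estimate for $\gamma>1$. The Taylor coefficients of $g_\mu$ at $0$ blow up as $\mu\to0$ in the orders between $\gamma$ and $m$; for instance, when $\gamma<2$ the Hessian at $0$ is of size $\mu^{\gamma/2-1}$. Our plan is to split again:
\begin{itemize}
\item If $|y|^2\ge\mu$, we Taylor-expand only to order $m$ using the bound $|D^{m+1}g_\mu|\lesssim|\xi|^{\gamma-m-1}$ along a path that avoids a neighbourhood of $0$. This path argument is itself justified by the far regime bound.
\item If $|y|^2<\mu$, $g_\mu$ is analytic at scale $\sqrt\mu$, and the derivative bound $|D^kg_\mu|\lesssim\mu^{(\gamma-k)/2}$ yields $|g_\mu(y)-P(y)|\lesssim\mu^{(\gamma-m-1)/2}|y|^{m+1}\le|y|^\gamma$.
\end{itemize}
To avoid expanding at $0$ altogether, we would instead expand at the stencil point and use the degree $M-1$ Taylor polynomial there, choosing the order adaptively according to whether $|h|\gtrless\sqrt\mu$. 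The cleanest version of this is the following dichotomy:
\begin{itemize}
\item if $|h|\le\sqrt\mu$, Step 1 with $k=M$ gives $|D^Mu|\lesssim\mu^{(\gamma-M)/2}$ near $x$, hence $|\Delta_h^Mu|\lesssim|h|^M\mu^{(\gamma-M)/2}\le|h|^\gamma$;
\item if $|h|>\sqrt\mu$, the near-regime points satisfy $|y|\lesssim|h|$, and $g_\mu$ is comparable to $g_0$ up to an error of size $\mu^{\gamma/2}\lesssim|h|^\gamma$ in the relevant sense. In this case we would treat $g_\mu=g_0+(g_\mu-g_0)$, with $g_0=|x|^\gamma$ homogeneous, so that $|\Delta^M_hg_0(x)|\le C|h|^\gamma$ by scaling. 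The difference $g_\mu-g_0$ would then be handled by the mean value theorem in $\mu$.
\end{itemize}
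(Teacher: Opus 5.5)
Your argument is correct in substance, but it takes a different route from the paper. You use the finite-difference (H\"older--Zygmund) characterization of $B^\gamma_{\infty,\infty}$ and split at the scale of the increment, $|x|\ge 2M|h|$ versus $|x|<2M|h|$. The $M$-th difference annihilates polynomials, and the far regime is controlled by $|D^Mu(y)|\lesssim|y|^{\gamma-M}$ uniformly in $\mu$.

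The paper instead estimates the Littlewood--Paley blocks directly, writing $\Delta_ju(x)=\int K_0(w)\,u(x-2^{-j}w)\,dw$. For $|x|\le 2\cdot 2^{-j}$ it uses only the size bound $|u(z)|\le C|z|^\gamma$. For $|x|>2\cdot 2^{-j}$ it subtracts the degree-$(m-1)$ Taylor polynomial at $x$, which the vanishing moments of $K_0$ kill, and splits the $w$-integral at $2^{-j}|w|=|x|/2$.

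Structurally the two proofs are the same near/far dichotomy, with the vanishing moments of $K_0$ playing the role of $\Delta_h^MP=0$. They trade off differently:
\begin{itemize}
\item The paper's proof is self-contained relative to the Littlewood--Paley definition it uses. Yours imports the equivalence of the difference and Littlewood--Paley norms as a black box.
\item Your proof actually addresses uniformity in $\mu$. The paper carries out the estimate only for $\mu=0$ and asserts that $\mu>0$ follows by a similar argument. Its size bound $|u(z)|\lesssim|z|^\gamma$ fails when $\mu\gg 4^{-j}$. Your dichotomy supplies the missing ingredient: for $|h|\le\sqrt\mu$ use $|D^Mu|\lesssim\mu^{(\gamma-M)/2}$, and treat $|h|>\sqrt\mu$ separately.
\end{itemize}

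Parts of your plan are unnecessary or slightly off.
\begin{itemize}
\item The \emph{main obstacle} you describe for expanding at $0$ does not arise. The degree $m=\lceil\gamma\rceil-1$ polynomial involves only derivatives of order $k\le m<\gamma$, which are bounded by $C\mu_0^{(\gamma-k)/2}$. The integral form of the remainder with $|D^{m+1}u(ty)|\lesssim|ty|^{\gamma-m-1}$ is integrable in $t$ because $\gamma>m$, and it gives $|u(y)-P(y)|\le C|y|^\gamma$ uniformly in $\mu$ with no further splitting.
\item In the case $|h|>\sqrt\mu$ you need neither $g_0$ nor a mean value argument in $\mu$. For $\gamma<2$ that argument would in fact produce a factor $(\mu'+|z|^2)^{\gamma/2-1}$ that is unbounded near $z=0$. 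Every stencil point satisfies $|z|<3M|h|$, so $|u(z)|\le\|\eta\|_\infty(\mu+9M^2|h|^2)^{\gamma/2}\lesssim|h|^\gamma$ directly.
\item In Step 1 the correct bookkeeping is $|\beta|=2j-k$, not merely $|\beta|\le j$. That is what produces the exponent $(\gamma-k)/2$.
\end{itemize}
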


\begin{proof}[Proof of Lemma \ref{lem:keyBgamma}] It suffices to consider $\mu=0$. The case $\mu>0$ can be deduced by either a similar argument, or by direction from $\mu=0$ by a H\"older estimate when $0<\gamma \leq 1$. 
It is clear that $|\Delta_j u(x)|\leq C(\eta, \gamma)$ for all $x\in \R^n$. 
We recall that
\begin{align}\label{eq:derivative-xalpha}
    K_j(x) = 2^{nj}K_0(2^jx) 
    \qquad\text{and}\qquad 
    \int_{\R^n} x^\alpha K_0(x)\;dx = 0
\end{align}
for any multi-index $\alpha\in \Z^n_{\geq 0}$. For all $j\geq 0$, we have
\begin{align*}
    \Delta_j u(x) &= \int_{\R^2} K_j(w)u(x-w)\;dw 
    = \int_{\R^n} K_0(w)u(x-2^{-j}w)\;dw.
\end{align*}
Let $\rho=2^{-j}$ and $m\in \mathbb{Z}_\geq 0$ be such that $m-1 \leq \gamma < m$. 
\medskip

\noindent
{\it \underline{Case 1}.} If $|x|\leq 2\rho$ then using $|u(x)|\leq C|x|^\gamma$, we have
    \begin{align*}
^{\prime}        |\Delta_ju(x)| &\leq C\int_{\R^n} K_0(y)|x-\rho y|^\gamma\;dy 
        \leq C|\rho|^\gamma\int_{\R^n} K_0(y) (2+|y|)^\gamma\;dy \leq C_\gamma \cdot 2^{-j\gamma}
    \end{align*}
    since $K_0\in \mathcal{S}(\R^n)$. Therefore $2^{j\gamma }|\Delta_j u(x)|\leq C_\gamma$ for all $|x|\leq 2\rho$.

\medskip

\noindent
{\it \underline{Case 2}.} If $|x|> 2\rho$, then $u\in C^\infty(\overline{B_{2r}(x))}$ where $r=\frac{1}{2}|x|$. We have
    \begin{align*}
        u(y) = \sum_{|\alpha|\leq m-1} \frac{D^\alpha u(x)}{\alpha!}(y-x)^\alpha + \sum_{|\beta| = m} R_\beta(y)(y-x)^\beta, \qquad y\in \overline{B_r(x)},
    \end{align*}
    where the first sum is the be the Taylor expansion polynomial of degree $m-1$ of $u$ at $x$, denoted by $T_{m-1}$, and the remainders satisfy 
    \begin{align*}
        R_\beta(y) = \frac{m}{\beta!}\int_0^1 (1-t)^{m-1}D^\beta u(x + t(y-x)) \;dt, \qquad |\beta| = m. 
    \end{align*}
    If $u(y) = \eta(y)|y|^\gamma$ for $\eta\in \mathrm{C}_c(\R^n)$, then for all $y\neq 0$ we have
    \begin{equation*}
        \max_{|\alpha| = m}|D^\alpha u(y)| \leq C_{n,m}(\eta)|y|^{\gamma-m}, \qquad m>\gamma. 
    \end{equation*}
    Hence, for every multi-index $\beta$ with $|\beta|=m$ we have
    \begin{align}\label{eq:TaylorRemainder}
        |R_\beta(y)| \leq \frac{|\beta|}{\beta!}\int_0^1 (1-t)^{m-1} C_{n,m}(\eta)|x+t(y-x)|^{\gamma-m}\;dt. 
    \end{align}
    Thanks to \eqref{eq:derivative-xalpha}, we have
    \begin{equation*}
        \int_{\R^n} T_{m-1}(x-\rho w) \;dw = 0.
    \end{equation*}
    Therefore
    \begin{align*}
        \Delta_j u(x)
        &=\int_{\R^n} K_0(w) \left( u(x-\rho w) - T_{m-1}(x-\rho w) \right)\;dw  = I_1+I_2,
    \end{align*}   
    where
    \begin{align*}
        I_1 &= \int_{\rho |w|\leq r} K_0(w) \left( u(x-\rho w) - T_{m-1}(x-\rho w) \right)\;dw \\
        I_2 &= \int_{\rho |w| > r} K_0(w) \left( u(x-\rho w) - T_{m-1}(x-\rho w) \right)\;dw.
    \end{align*}

    \medskip
    
    \noindent
    {\it Estimate $I_1$.} Let $y = x-\rho w$. If
    $y\in B_r(x)$ with $r=\frac{|x|}{2}$, then $|\rho w| \leq r= \frac{|x|}{2}$ and $y\in \overline{B}_r(x)$, and for all $t\in [0,1]$ we have
        \begin{align*}
            |x+t(y-x)| = |x+t\rho w| \geq |x| - |\rho w|  \geq |x| - \frac{|x|}{2} = \frac{|x|}{2}. 
        \end{align*}
        In view of \eqref{eq:TaylorRemainder}, for every multi-index $\beta$ with $|\beta|=m)$, we obtain that
        \begin{align*}
            |R_\beta(y)| 
                &\leq C_{n,m}(\eta) \left(\frac{|x|}{2}\right)^{\gamma-m} 
                 \leq C_{n,m}(\eta) \rho^{\gamma-m}
        \end{align*}
        since $|x|\geq 2\rho$ and $\gamma<m$.
        Hence, since $|y-x| = |\rho w|$, we have
        \begin{align*}
            \sum_{|\beta|=m} |R_\beta(y)|\cdot|y-x|^\beta 
                \leq 
            C_{n,m}(\eta)\cdot \rho^{\gamma-m} \rho^m |w|^m  
                = 
            C_{n,m}(\eta)\cdot \rho^\gamma |w|^m  .
        \end{align*}
        Therefore, 
        \begin{align*}
            I_1 \leq \int_{\rho|w|\leq r} K_0(w) \sum_{|\beta|=m} |R_\beta(y)|\cdot |y-x|^\beta \;dw 
            \leq 
            \left(C_{n,m}(\eta)   \int_{\R^n} K_0(w)|w|^m\right) \rho^\gamma. 
        \end{align*}

    \medskip

    \noindent
    {\it Estimate $I_2$.} If $|\rho w| > r = \frac{|x|}{2}$ then $|x|\leq 2\rho |w|$. We have
    \begin{align*}
        |u(x-\rho w)| \leq C(\eta)|x-\rho w|^\gamma \leq 3^\gamma C(\eta)  \cdot \rho^\gamma |w|^\gamma. 
    \end{align*}
    Similarly, we have
    \begin{align*}
        T_{m-1}(x-\rho w) 
            &= 
            \sum_{|\alpha|\leq m-1} \frac{|D^\alpha u(x)|}{\alpha!} \cdot |\rho w|^\alpha \\
            &\leq 
            \sum_{|\alpha|\leq m-1} \frac{1}{\alpha!} C_{n,|\alpha|}(\eta)|x|^{\gamma-|\alpha|} \cdot \rho^{|\alpha|}\cdot |w|^{|\alpha|}\\
            &\leq   
            \sum_{|\alpha|\leq m-1} \frac{C_{n,|\alpha|}(\eta)}{\alpha!} |2\rho w|^{\gamma-|\alpha|}\cdot \rho^{|\alpha|}\cdot |w|^{|\alpha|} \leq 2^{\gamma}C_{n,m}(\eta) \rho^\gamma |w|^\gamma. 
    \end{align*}
    Therefore
    \begin{align*}
        I_2\leq \Big(3^\gamma C(\eta) + 2^\gamma C_{n,m}(\eta)\Big)\left(\int_{\R^n} K_0(w)|w|^\gamma\;dw \right)\rho^\gamma . 
    \end{align*}
We obtain the conclusion. 
\end{proof}

To conclude this section, we prove the result in Remark \ref{rmk:prototypeCPDE}-(ii). We recall that $\omega=(\omega_1,\omega_2)\in \mathcal{D}(1,C_\omega,2)$ iff $\omega_2/\omega_1$ is badly approximable.

\begin{lem} \label{lem:prototype}
Let $n=2$, $\omega\in \mathcal{D}(1,C_\omega,2)$, $f({\bf y}) = (2-\sin(2\pi y_1)-\sin(2\pi y_2))^\gamma$ with ${\bf y}=(y_1,y_2)\in \T^2$ and $V(x) = f(\omega x)\in C(\R)$. 
There exists $C>0$ independent of $\varepsilon$ such that, for $(x,t)\in \mathbb{R}\times (0,\infty)$ we have
\begin{numcases}
    {u^\varepsilon(0,1) - u(0,1)  \geq -C}
    \varepsilon 
        &$\gamma > 1$,
        \label{eq:lower1}\\ 
    \varepsilon |\log(\varepsilon)|
        &$\gamma =1$,
        \label{eq:lower2}\\
    \varepsilon^{\gamma}
        &$\gamma <1$. 
        \label{eq:lower3}
\end{numcases}
Since \(\mathcal{O}(\varepsilon)\) is optimal in periodic homogenization, the rates \eqref{eq:lower1}--\eqref{eq:lower3} are nearly optimal for this prototype.
\end{lem}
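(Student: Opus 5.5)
The plan is to feed the prototype into Part~1 of the proof of Theorem~\ref{thm:homogenization}. By \eqref{eq:Action} and \eqref{eq:u-eps-01}, the lower bound reduces to controlling $\varepsilon v_p(\eta_r(t/\varepsilon))$ uniformly in $|p_0|\le|p|\le C_0$. Here
\[
\frac{v_p(s)}{s}=\frac1s\int_0^s f_\mu(\omega\tau)\,d\tau-\int_{\T^2}f_\mu,
\qquad
f_\mu=\sqrt{2(\mu+f)},\quad 0\le\mu\le\mu_0 .
\]
So the task is to find the Besov regularity of $f_\mu$, uniformly in $\mu$, and then apply Theorem~\ref{thm:BesovHolderCounting}(ii) with $\sigma=1$ and $q=\infty$.

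\textbf{Step 1: regularity of $f_\mu$.} The function $g(\mathbf y)=2-\sin(2\pi y_1)-\sin(2\pi y_2)$ is smooth and nonnegative on $\T^2$. It vanishes only at $\mathbf y_0=(1/4,1/4)$, where the minimum is non-degenerate. By the Morse lemma, near $\mathbf y_0$ there is a smooth chart $z=\Phi(\mathbf y)$ with $g=|z|^2$, so $f=|z|^{2\gamma}$ and
\[
f_\mu=\sqrt2\,\bigl(\mu+|z|^{2\gamma}\bigr)^{1/2}.
\]
Take $\mu=0$ first. Then $f_0=\sqrt2\,|z|^{\gamma}$ near $\mathbf y_0$ and is smooth elsewhere. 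A cutoff $\eta$ splits it into a smooth part and $\eta|z|^\gamma$. Lemma~\ref{lem:keyBgamma} gives $\eta|z|^\gamma\in B^\gamma_{\infty,\infty}$. I will use that the Besov class $B^\gamma_{\infty,\infty}$ is invariant under smooth diffeomorphisms and multiplication by smooth functions; for $\gamma\notin\N$ this is just the Hölder class by \eqref{eq:Csalpha}. Hence $f_0\in B^\gamma_{\infty,\infty}(\T^2)$.

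For $\mu>0$ I need a bound uniform in $\mu$. For $\gamma\le1$, the map $t\mapsto\sqrt{\mu+t^2}$ is $1$-Lipschitz, so a Hölder/second-difference estimate transfers uniformly. For general $\gamma$, I would rerun the Case~1/Case~2 argument of Lemma~\ref{lem:keyBgamma} for $(\mu+|z|^{2\gamma})^{1/2}$. The needed derivative bounds $|D^m(\cdot)|\lesssim|z|^{\gamma-m}$ hold uniformly in $\mu$, since each derivative of the composition is dominated by the $\mu=0$ case.

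\textbf{Step 2: apply Theorem~\ref{thm:BesovHolderCounting}(ii).} With $\sigma=1$, Step~1 gives:
\[
\Bigl|\frac{v_p(s)}{s}\Bigr|\le C\cdot
\begin{cases}
s^{-1}, & \gamma>1,\\
s^{-1}\log s, & \gamma=1,\\
s^{-\gamma}, & \gamma<1.
\end{cases}
\]
For $\gamma>1$ it suffices to use $B^{\gamma'}_{\infty,\infty}$ with some $1<\gamma'\le\gamma$.

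\textbf{Step 3: back to $u^\varepsilon-u$.} Arguing exactly as in \eqref{eq:S1}--\eqref{eq:S2} with $t=1$, $|\varepsilon\eta(1/\varepsilon)|$ is bounded, and $|\eta(1/\varepsilon)|$ is comparable to $\varepsilon^{-1}$. The exception is the case where $\eta(1/\varepsilon)$ stays small; there $\varepsilon|v_p|\le C\varepsilon$ trivially because $v_p$ is locally bounded. Then
\[
\varepsilon|v_p(\eta)|=|\varepsilon\eta|\,\Bigl|\frac{v_p(\eta)}{\eta}\Bigr|
\]
is bounded by $C\varepsilon$, $C\varepsilon|\log\varepsilon|$ and $C\varepsilon^{\gamma}$ in the three cases. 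Taking the infimum over $r$ via \eqref{eq:u-eps-01} and \eqref{eq:Action} yields \eqref{eq:lower1}--\eqref{eq:lower3}. Near-optimality is then just the remark that $\mathcal O(\varepsilon)$ is optimal even in the periodic case.

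\textbf{Main obstacle.} I expect the hardest part to be the uniform-in-$\mu$ Besov bound in Step~1 for $\gamma\ge1$, especially at integer $\gamma$. There $B^\gamma_{\infty,\infty}$ is not a Hölder space, so the diffeomorphism invariance and the $\mu$-uniform Taylor remainder estimates have to be checked directly at the level of Littlewood--Paley blocks.
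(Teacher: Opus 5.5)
Your proposal is correct and is essentially the paper's proof: the uniform-in-$\mu$ bound $f_\mu\in B^\gamma_{\infty,\infty}(\T^2)$ via Lemma~\ref{lem:keyBgamma}, then Theorem~\ref{thm:BesovHolderCounting}(ii) with $\sigma=1$ and $q=\infty$, then Part~1 of the proof of Theorem~\ref{thm:homogenization} at $t=1$; your Morse-chart, cutoff and uniformity-in-$\mu$ details fill in what the paper leaves implicit. Two simplifications: in Step~3 you only need the upper bound $|\eta(1/\varepsilon)|\le C/\varepsilon$, since the resulting bounds $|v_p(s)|\le C$, $C\log|s|$, $C|s|^{1-\gamma}$ are nondecreasing in $|s|$, so no ``comparable to $\varepsilon^{-1}$'' dichotomy is required; and the integer-$\gamma$ obstacle you flag does not arise, because for $\gamma>1$ you may take $\gamma'\in(1,\min\{\gamma,2\})$ non-integer, while for $\gamma=1$ the uniform Lipschitz bound $|\nabla f_\mu|\le |\nabla g|/\sqrt{2g}\le C$ already gives a uniform $B^1_{\infty,\infty}$ bound.
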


\begin{proof}
For $0<\gamma < \infty$ and $0\leq \mu\leq \mu_0$, we define
    \begin{equation*}
        f_{\mu}(x) = \big(\mu + f(x)\big)^{1/2}, \qquad x\in \T^2. 
    \end{equation*}
    Then by Lemma \ref{lem:keyBgamma}, 
    \(f_{\mu}\in B^\gamma_{\infty,\infty}(\T^2)\) uniformly for
    \(0\leq \mu\leq \mu_0\). Theorem~\ref{thm:BesovHolderCounting} yields
    \begin{align}\label{eq:sharpT}
        \left| 
            \frac{1}{T}\int_0^T (\mu + F(\omega_1 t, \omega_2t))^{1/2}\;dt - \int_{\T^2} (\mu+F(x))^{1/2}\;dx
        \right|  
        \leq C\begin{cases}
            \begin{aligned}
                &T^{-1} &&\quad  \gamma>1, \\
                &T^{-1}\log(T) &&\quad  \gamma=1, \\
                &T^{-\gamma} &&\quad  \gamma<1,
            \end{aligned}
        \end{cases}
    \end{align}
    where $C$ is a constant independent of $T$. By an argument similar to Theorem \ref{thm:homogenization} with $\sigma=1$ we deduce \eqref{eq:lower1}--\eqref{eq:lower3}. 
\end{proof}

The rates \eqref{eq:sharpT} provide a sharp improvement of
\cite[Corollary~4.3]{hu_polynomial_2025}. The latter gives only the rates
$T^{-1}$ for $\gamma>2$, $T^{-1/2}$ for $\gamma\in[1,2]$, and
$T^{-\frac{\gamma}{\gamma+1}}$ for $\gamma\in(0,1)$.

\section{Application: Statistical regularity of invariant measures w.r.t. perturbations}\label{Sec:AppStabilityMeasures}

Consider the family of differential equations
\begin{equation}\label{eq:ODE}
	\dot x = V(x,\dt),
	\qquad x\in\T^n,
\end{equation}
where $V:\T^n\times[-1,1]\to\R^n$ is a family of Lipschitz vector fields satisfying
\begin{equation}\label{ass:vectorfield}
	\|V(\cdot,\dt)-\omega\|_{L^\infty(\T^n)}
	\le
	|\delta|,
\end{equation}
for some $\omega\in \mathcal{D}(\sigma,C_\omega,n)$. Let $\phi_\delta^t$ denote the flow generated by \eqref{eq:ODE}.

\begin{defn}[Invariant measure of a flow]
A probability measure $\mu_\delta\in\mathbb P(\T^n,\R)$ is said to be invariant with respect to the flow $\phi_\delta^t$ if the push-forward by the flow satisfies $(\phi_\delta^t)_\#\mu_\delta=\mu_\delta$ for all $t\in \R$, i.e.,
\begin{equation*}
	\int_{\T^n} f\circ \phi^t_\delta(x)\;d \mu_\delta(x) = \int_{\T^n} f(x)\;d\mu_\delta(x) \qquad\text{for all}\; f\in C(\T^n), t\in \R. 
\end{equation*}
\end{defn}

\begin{defn}\label{defn:W1}
	For any two measures $\mu,\nu\in\mathbb P(\T^n,\R)$, the {\bf  1-Wasserstein distance} between them is defined by the {\bf Kantorovich--Rubinstein} dual formula
\begin{align*}
	\mathcal W_1(\mu,\nu)
	:=
	\sup
	\left\lbrace 
	\left|
	\int_{\T^n}f\,d\mu
	-
	\int_{\T^n}f\,d\nu
	\right|: f\in {\rm Lip}(\T^n), \| \nabla f\|_{L^\infty}\leq 1
	\right\rbrace.
\end{align*}	
\end{defn}

\begin{proof}[Proof of Theorem~\ref{thm:st-regu}]
Since $\omega$ is non-resonant, the linear flow $\phi_0^t(x)=x+\omega t$ is uniquely ergodic. Let $\mu_0$ denote its unique invariant probability measure. Then $\mu_0$ coincides with the normalized Lebesgue measure on $\T^n$, that is $d\mu_0(x) = dx$.
In what follows, we take $f\in \mathrm{Lip}(\T^n)$ with $\Vert \nabla f\Vert_{L^\infty(\T^n)} \leq 1$.
Since $\nu_\delta$ is invariant under $\phi_\delta^t$, we have
\begin{align*}
	\int_{\T^n}f\,d\mu_\delta
	=
	\frac{1}{T}
	\int_0^T
	\int_{\T^n}
	f(\phi_\delta^t(x))
	\,d\mu_\delta(x)\,dt \qquad\text{for every}\;T>0.
\end{align*}
Hence
\begin{align*}
	&\left|
	\int_{\T^n}f\,d\mu_\delta
	-
	\int_{\T^n}f\,d\mu_0
	\right| 
	= 
	\left|
		\frac{1}{T}\int_0^T f(\phi^t_\delta(x))\;d\mu_\delta(x)\;dt - \int_{\T^n} f(x)\;d\mu_0(x)
	\right| \leq I_1 + I_2,
\end{align*}
where 
\begin{align*}
	I_1 &=
		 \left|
		 \frac{1}{T}
		\int_0^T f(\phi^t_\delta(x))\;d\mu_\delta(x)\;dt - \int_{\T^n} f(x+\omega t)\;d\mu_\delta		 (x)
	\right| \\
	I_2 &=	
	\left|
		\frac{1}{T}\int_0^T f(x+\omega t	)\;d\mu_\delta(x)\;dt - \int_{\T^n} f(x)\;d\mu_0(x)
	\right|. 
\end{align*}
Applying Theorem~\ref{thm:BesovHolderCounting}-(iii) with $f\in C^{0,1}(\T^n)$, we obtain
\begin{align*}
	\left|
	\frac{1}{T}\int_0^T f(x+\omega t)\;dt - \int_{\T^n} f(y)\;dy
	\right|	
	\leq 
	\begin{cases}
	CT^{-1/\sigma}, & \qquad \sigma>1,\\	 
	CT^{-1}\log T, & \qquad \sigma=1. 
\end{cases}
\end{align*}
Integrating the above estimate with respect to $\mu_\delta$ yields
\begin{align*}
	I_2 = \left| 
	\frac{1}{T} \int_0^T\int_{\T^n}
	f(x+\omega t) \,d\mu_\delta(x)\,dt
	-
	\int_{\T^n}f\,d\mu_0 \right| 
	\leq
\begin{cases}
	CT^{-1/\sigma}, & \qquad \sigma>1,\\
	CT^{-1}\log T, & \qquad \sigma=1.
\end{cases}
\end{align*}
Let $\wt{\phi}_\delta^t$ be the lift of $\phi_\delta^t$ to $\mathbb{R}^n$. 
Since $V(\cdot, \delta)$ is $\mathbb{Z}^n$-periodic, the lifted flow satisfies
\begin{align*}
	\left| 	
	\widetilde{\phi}_\delta^t(x)
	- x \right|
	= \left| 
	\int_0^t
	V\left(\widetilde{\phi}_\delta^s(x), \delta\right)\,ds \right| .
\end{align*}
Therefore, since $V\in \mathrm{Lip}(\T^n\times [-1,1];\R^n)$ and $V(x,0) =\omega$, we have
\begin{align*}
\left| 	
	\widetilde{\phi}_\delta^t(x)
	- (x+\omega t) \right| \leq 
	\int_0^t \left|
		V\left(\widetilde{\phi}_\delta^s(x), \delta\right) 
		-
		V\left(\widetilde{\phi}_\delta^s(x), 0\right)
	\right|\;ds \leq \delta t. 
\end{align*}
Since $\Vert \nabla f\Vert_{L^\infty(\T^n)} \leq 1$, we obtain
\begin{align*}
I_1 = 	\frac{1}{T}\int_0^T  \int_{\T^n}
		\left|
		f\bigl(\phi_\delta^t(x)\bigr)
			-
		f(x+\omega t)
		\right|d\mu_\delta(x) dt \leq \frac{\delta T}{2}. 
\end{align*}
Combining the above estimates yields
\begin{equation*}
	\mathcal{W}_1(\mu_\delta,\mu_0) 
	\leq 
	\begin{cases}
	T^{-1/\sigma}+\delta T \leq \delta^{\frac{1}{1+\sigma}},
	& \sigma>1,
	\\[2mm]
	T^{-1}\log T+\delta T \leq \delta^{1/2}\log(1/\delta),
	& \sigma=1, 
\end{cases}
\end{equation*}
by choosing $T$ optimally in each case and that completes the proof of the upper bound. The lower bound part can be obtained directly from the following Lemma.
\end{proof}

\begin{lem}\label{Thm2}
	Let $n=2$ and $\omega=(1,\vartheta)\in\cD(\sigma, C_\omega,2)$. There exist a sequence $\{\delta_j\}_{j\in\mathbb{N}}$ with $\delta_j>0$, $\delta_j\to0$ as $j\to\infty$, and a corresponding sequence of vector fields $V_{\delta_j}(x) = V(x,\delta_j)$ such that
	\[
	\sup_{x\in\mathbb{T}^2}\bigl|V_{\delta_j}(x)-V_0\bigr|=\delta_j,
	\]
	and
	\begin{equation}\label{E9}
		\mathcal W_1\bigl(\nu_0,\nu_{\delta_j}\bigr)
		\ge C'
		\begin{cases}
			\,\delta_j^{1/(r+1)}, & \qquad 1<r<\sigma_*(\om),\\[2mm]
			\,\delta_j^{1/2}, & \qquad \sigma_*(\om)=1,
		\end{cases}
	\end{equation}
	for all $j\in\mathbb{N}$.
\end{lem}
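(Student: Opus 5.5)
The plan is to build, for each rational approximation of $\vartheta$, a perturbed vector field whose flow has a \emph{periodic} orbit. The invariant measure supported on that orbit is then compared with Lebesgue measure using a Lipschitz test function adapted to the orbit.

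\emph{Choice of perturbation.} Choose integers $(P_j,Q_j)$ with $|Q_j\vartheta-P_j|$ small. Concretely, take $Q_j$ along a sequence of continued-fraction denominators satisfying
\[
|\vartheta-P_j/Q_j|\approx Q_j^{-(r+1)}\quad\text{when } 1<r<\sigma_*(\om),
\]
which exists by the definition \eqref{eq:defn-diophantine-type} of $\sigma_*$. In the case $\sigma_*(\om)=1$ we simply use $|\vartheta-P_j/Q_j|\le Q_j^{-2}$. Set
\[
V_{\delta_j}\equiv (1,P_j/Q_j),\qquad \delta_j:=|\vartheta-P_j/Q_j|.
\]
This is a constant field, so $\sup_x|V_{\delta_j}-V_0|=\delta_j$ and the Lipschitz requirement holds trivially; in the $r$-case, $\delta_j\approx Q_j^{-(r+1)}$. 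The flow of $V_{\delta_j}$ is rational, so every orbit is a closed curve $\Gamma_j$ of slope $P_j/Q_j$. Let $\nu_{\delta_j}$ be the normalized arclength measure on $\Gamma_j$ through the origin; it is invariant.

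\emph{Lower bound on $\mathcal W_1$.} The closed orbit $\Gamma_j$ is a union of parallel segments. In a transversal direction, consecutive strands are spaced about $1/\sqrt{P_j^2+Q_j^2}\approx 1/Q_j$ apart. Take the test function
\[
f(x)=\operatorname{dist}(x,\Gamma_j),
\]
which is $1$-Lipschitz on $\T^2$. Then $\int f\,d\nu_{\delta_j}=0$, while $\int_{\T^2} f\,dx\approx c/Q_j$, being the average distance to a family of equally spaced lines of spacing about $1/Q_j$. Hence
\[
\mathcal W_1(\nu_0,\nu_{\delta_j})\ge c/Q_j .
\]
In the $r$-case, $Q_j\approx\delta_j^{-1/(r+1)}$, so $c/Q_j\approx \delta_j^{1/(r+1)}$. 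In the case $\sigma_*=1$ we only know $\delta_j\le Q_j^{-2}$, so $1/Q_j\ge \delta_j^{1/2}$, giving \eqref{E9}. Both follow with a uniform $C'$.

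\emph{Main obstacle.} The delicate step is the two-sided control $\delta_j\approx Q_j^{-(r+1)}$ in the case $1<r<\sigma_*$; only the upper bound $\delta_j\lesssim Q_j^{-(r+1)}$ is needed. Since $r<\sigma_*$, the vector $\omega$ is not in $\mathcal D(r,C,2)$ for any $C$. Hence there are infinitely many $k=(k_1,k_2)$ with $|k\cdot\omega|<|k|^{-r}$, that is, $|k_1+k_2\vartheta|<|k|^{-r}$, which yields $|\vartheta-P/Q|\lesssim Q^{-(r+1)}$ along a sequence with $Q\to\infty$. One then replaces $(P,Q)$ by its reduced form so that the orbit has exact spacing $\approx 1/Q$; reduction only decreases $Q$ and improves the bound.

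Two further points need care. First, the average-distance computation must be uniform; it is exact for parallel lines, giving $1/(4d)$ with $d$ the spacing. Second, the exponent $1/(r+1)$ should match the upper bound \eqref{eq:stat-regu-upper} with $\sigma$ replaced by $r$, up to the loss $r\uparrow\sigma_*$.
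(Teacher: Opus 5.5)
Your proposal is correct and follows essentially the same route as the paper: the constant field $(1,P_j/Q_j)$, normalized arclength on its closed orbit, the $1$-Lipschitz test function $\operatorname{dist}(\cdot,\Gamma_j)$ with $\int_{\T^2}\operatorname{dist}(x,\Gamma_j)\,dx=\frac{1}{4\sqrt{P_j^2+Q_j^2}}$, and then $\delta_j\le Q_j^{-(r+1)}$ (resp.\ $\delta_j\le Q_j^{-2}$) to convert $1/Q_j$ into $\delta_j^{1/(r+1)}$ (resp.\ $\delta_j^{1/2}$). Your explicit reduction to coprime $(P_j,Q_j)$ is a good addition, since the paper's line-counting computation tacitly needs it; one small slip is that the mean distance to parallel lines with spacing $d$ is $d/4$, not $1/(4d)$ (here $d=(P_j^2+Q_j^2)^{-1/2}$).
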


\begin{proof} 
We consider two cases depending on the value of $\sigma_*(\omega)$:
\begin{itemize}
\item If $\sigma_*(\omega)=1$, then by \eqref{eq:Diophantineestimates}, there exist infinitely many pairs $(P_j,Q_j)\in\mathbb{Z}\times\mathbb{N}$ such that
\begin{equation}\label{eq:case1sigma1}
	\left|\vartheta - \frac{P_j}{Q_j}\right|\leq \frac{1}{Q_j^{2}}.
\end{equation}
\item If $\sigma_*(\omega)>1$, then due to the definition of $\sigma_*(\omega)$ in \eqref{eq:defn-diophantine-type} (see also \cite[Definition 5]{galatolo_quantitative_2022}, it follows that for any $1<r<\sigma_*(\omega)\leq\sigma$, there exist infinitely many pairs $(P_j,Q_j)\in\mathbb Z\times\mathbb N$ such that 
\begin{align}\label{eq:case1sigmaless1}
	\left|\vartheta-\frac{P_j}{Q_j}\right|\leq \frac{1}{Q_j^{r+1}}. 
\end{align}
\end{itemize}
In both cases, for each \(j\geq1\), we introduce the following constants and the corresponding constant vector fields:
\begin{equation*}
    \delta_j := \left|\vartheta-\frac{P_j}{Q_j}\right|,\qquad
    V_{\delta_j}(x) = V(x,\delta_j)\equiv \left(1,\frac{P_j}{Q_j}\right), \qquad V_0(x) = \left(1,\vartheta\right) \qquad x\in \T^2.
\end{equation*}
Then for all $j\in \N$, we have 
\begin{equation*}
	\|V_{\delta_j}-V_0\|_{L^\infty(\mathbb{T}^2)}=\delta_j.
\end{equation*}
For each $j\in \N$, the rational flow $\phi^t_{\delta_j}$ associated with $\dot{x} = V_{\delta_j}(x)$ is periodic, and its corresponding orbit starting at $(0,0)$ can be defined by 
\begin{align*}
	\ell_j = \left\lbrace \left(t, \frac{P_j}{Q_j}t\right) \quad (\mathrm{mod}\;\Z^2) : t\in \Z^2\right\rbrace.
\end{align*}
Let $\nu_{\delta_j}$ be the normalized one-dimensional Lebesgue measure on $\ell_j$. We have $\mathrm{supp}(\nu_{\delta_j}) = \ell_j$ and $\nu_{\delta_j}(\ell_j) = 1$. Furthermore, we can see that $\mu_{\delta_j}$ is an invariant measure with respect to the flow $\phi^t_{\delta_j}$. 

We estimate $\mathcal{W}_1(\nu_{\delta_j}, \nu_0)$ using a specific test function thanks to Definition \ref{defn:W1}. Let us define
\begin{align*}
	\psi(x) = d_{\mathbb{T}^2} (x,\ell_j), \qquad x\in \T^2,
\end{align*}
where $d_{\mathbb{T}^2}(x,\ell_j)$ is the distance function on the torus. It is clear that $\psi$ is Lipschitz and $|\nabla \psi(x)| \leq 1$. Since $\psi = 0$ on $\mathrm{supp}(\nu_{\delta_j}) = \ell_j$, we deduce that (Kantorovich--Rubinstein duality)
\begin{align*}
	\mathcal{W}_1(\nu_{\delta_j}, \nu_0)  \geq \left|\int_{\T^2} \psi\;d\nu_{\delta_j} - \int_{\T^2} \psi\;d\nu_{0}\right| = \int_{\T^2} \psi\;d\nu_{0} = \int_{\T^2} \psi(x)\;dx,
\end{align*}
since $\nu_0$ is the Lebesgue measure on $\T^2$. 
Let $\pi:\R^2\to \T^2$ be the quotient map by the relation $\T^2 = \R^2/\Z^2$.
For each $m\in \Z$, let us define 
\begin{equation*}
	d_m=\{(x_1,x_2)\in \R^2: x_1P_j -x_2Q_j = m\} \qquad \text{then}\qquad \pi^{-1}(\ell_j) = \bigcup_{m\in \Z} d_m. 
\end{equation*}
Let $x\in \T^2$, we say $\tilde{x}\in\R^2$ is a lift of $x$ if $\tilde{x}-x \in \Z^2$.
We compute
\begin{align*}
	\psi(x) &= \inf_{m\in \Z} \left\lbrace \mathrm{dist}\left(\tilde{x}, d_m\right):m\in \Z, \tilde{x}-x\in \Z^2 \right\rbrace 
	= \inf_{m\in \Z	}\frac{|\tilde{x}_1P_j - \tilde{x}_2Q_j-m|}{(P_j^2+Q_j^2)^{1/2}}.
\end{align*}
Let $s(x) = \{\tilde{x}_1P_j - \tilde{x}_2Q_j\}$ the the fractional part of $\tilde{x}_1P_j - \tilde{x}_2Q_j$. 
\begin{itemize}
\item We note that $s(x)$ is well-defined, i.e., it is independent of the lift $\tilde{x}$ of $x$. 
Consequently, 
\begin{align*}
	\psi(x) = \inf_{m\in \Z	}\frac{|\tilde{x}_1P_j - \tilde{x}_2Q_j-m|}{(P_j^2+Q_j^2)^{1/2}} = \frac{\min\{s(x), 1-s(x)\}}{(P_j^2+Q_j^2)^{1/2}}.
\end{align*}
\item If $g\in C([0,1);\R)$ then it is standard that
\begin{align*}
	\int_0^1\int_0^1 g(\{P_jx_1 - Q_jx_2\})\;dx_2 \;dx_1 = \int_0^1 g(s)\;ds. 
\end{align*}
Applying this identity with $g(s) = \min\{s,1-s\}$ we deduce that
\begin{align*}
	\int_{\T^2} \psi(x)\;dx = \frac{1}{(P_j^2+Q_j^2)^{1/2}} \int_{0}^1 \min\{s,1-s\}\;ds 
	= \frac{1}{4(P_j^2+Q_j^2)^{1/2}}. 
\end{align*}
\end{itemize}
Using the fact that if $j$ is large enough then $C_1(\vartheta)	\leq \left|\frac{P_j}{Q_j}\right| \leq C_2(\vartheta)$, we deduce that 
\begin{align*}
	\mathcal{W}_1(\nu_{\delta_j}, \nu_{0}) 
	&\geq 
	\int_{\T^2} \psi(x)\;dx = \frac{1}{4(P_j^2+Q_j^2)^{1/2}} \geq \frac{1}{4\sqrt{1+C(\vartheta)^2}} \cdot \frac{1}{Q_j} \\
	&\geq 
	\frac{1}{4\sqrt{1+C(\vartheta)^2}}
	\begin{cases}
	\delta_j^{1/2} &\quad \text{if}\; \sigma_*(\omega)=1,\\
	\delta_j^{1/(1+r)} &\quad \text{if}\; \sigma_*(\omega)>r>1,
	\end{cases}
\end{align*}
thanks to \eqref{eq:case1sigma1} and \eqref{eq:case1sigmaless1}, respectively. 
\end{proof}

\subsection*{Acknowledgement}  
S. Tu is grateful to Dorina Mitrea for fruitful discussions on Bessel potentials, which helped motivate the development of this work and Proposition \ref{prop:BirkhoffRateBesselWiener}. He also thanks Tuoc Phan and Artur Andrade for helpful discussions on Besov spaces. 
The authors acknowledge the hospitality of the Vietnam Institute for Advanced Study in Mathematics (VIASM), where part of this work was carried out during Intensive Research Collaboration Program (IRCP), June 1--9, 2026.
The work of S. Tu is supported in part by the AMS-Simons travel grant.
The work of J. Zhang and S. Zhu are supported by the National Key R\&D Program of China (No. 2022YFA1007500) and the National Natural Science Foundation of China (No. 12231010, 12571207).

\subsection*{Statements and Declarations} The authors declare no competing interests and no data were generated or analyzed in this study.

\subsection*{Author contributions} Son Tu and Jianlu Zhang contributed equally to the conception and overall development of the project, including its main theoretical results and applications, and to the preparation of the manuscript. Siyao Zhu participated in an early-stage exploration of a Fourier-based Jackson-kernel approach and drafted preliminary versions of examples associated with Theorem 1.2(i), (iv), and Theorem 1.5. All authors reviewed and approved the final manuscript.

\bigskip

\appendix

\section{Convergence rate of Birkhoff average under Bessel Potentials and Wiener Algebra}\label{a0}

\begin{defn}[Wiener algebra and Bessel Potential] \quad 
\begin{itemize}
\item[(i)] For $s\in \R$, the weighted Wiener algebra $A^s(\T^n)$ consists of functions $f\in L^1(\T^n)$ such that $(1+|\kappa|^2)^{\frac{s}{2}}\widehat{f}(\kappa)\in \ell^1(\Z^n)$, i.e., 
\begin{equation}\label{eq:AsWienerAlgebra}
     \Vert f\Vert_{A^s(\T^n)} = \sum_{\kappa\in \Z^n} (1+|\kappa|^2)^{\frac{s}{2}} |\widehat{f}(\kappa)| < \infty. 
\end{equation}

\item[(ii)] Let $\alpha>0$. We define the Bessel kernel by 
\begin{equation}\label{eq:GBessel}
    G_s(x) = \mathcal{F}^{-1}(1+4\pi^2|\xi|^2)^{-s/2}.     
\end{equation}
For $1\leq p\leq \infty$, the Bessel Potential spaces $L^{\alpha,p}(\T^n)$ is defined by
\begin{align*}
    L^{\alpha,p}(\T^n) = 
    \left\lbrace
        f = G_s*g:  g\in L^p(\R^n) 
    \right\rbrace, 
    \qquad \qquad 
    \Vert f\Vert_{L^{s,p}(\T^n)} := \Vert g\Vert_{L^p(\T^n)}. 
\end{align*}
It is standard that $L^{\alpha,2}(\T^n) \equiv H^{\alpha}(\T^n)$.

\end{itemize}
\end{defn}

\begin{lem}[Some Basic Embedding]\label{lem:besovWeinerAs}\quad 
\begin{enumerate}[label=(\alph*)] 

    \item (Sobolev Embedding) We have $H^s(\T^n)\subset C^{k,\alpha}(\T^n)$ where $k\in \N$, $\alpha\in(0,1)$, and $0<\alpha < \min \left\lbrace s-\frac{n}{2} - k, 1 \right\rbrace$. 	
    In particular, for $\alpha\in (0,1)$ and $\varepsilon>0$ small then $H^{\alpha+\frac{n}{2}+\varepsilon}(\T^n) \hookrightarrow C^{0,\alpha}(\T^n)$.
    
    \item (H\"older space and Wiener Algebra) If $f\in C^{0,\alpha}(\T^n)$ for $\alpha\in (0,1]$ then    
    \begin{equation*}
        |\kappa|^\alpha \cdot |\widehat{F}(\kappa)| \leq 2^{-(1+\alpha)} [F]_{C^{0,\alpha}(\T^n)} \qquad\text{for all}\;\kappa\in \Z^n\backslash\{0\}. 
    \end{equation*}
    As a consequence, if $0<\alpha\leq 1$ then $ C^{k,\alpha}(\T^n)\hookrightarrow A^{s}(\T^n)$ if $k+\alpha > s + n$. 

    	\item Let \(s>0\) and \(\varepsilon>0\). Then, for \(1\leq p\leq 2\), $L^{s+\frac np+\varepsilon,p}(\T^n)\subset A^s(\T^n)$. 
In particular, taking \(p=2\) gives $H^{s+\frac n2+\varepsilon}(\T^n)\subset A^s(\T^n)$. 
    	
\end{enumerate}
\end{lem}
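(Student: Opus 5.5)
The statement has three parts, and I will prove them in order, each by a direct Fourier-side argument on $\T^n$.

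\emph{Part (a).} I plan to use the standard Cauchy--Schwarz bound on the Fourier coefficients. For $f\in H^s(\T^n)$ and a multi-index $\beta$ with $|\beta|\leq k$, we have
\[
\sum_\kappa |\kappa|^{|\beta|}|\widehat f(\kappa)|\leq \|f\|_{H^s}\Big(\sum_\kappa (1+|\kappa|^2)^{|\beta|-s}\Big)^{1/2},
\]
and the last sum is finite because $s-k>n/2$. Hence each $D^\beta f$ is given by an absolutely convergent Fourier series and is continuous. For the H\"older seminorm of $D^\beta f$ with $|\beta|=k$, I will use $|e^{2\pi i\kappa\cdot h}-1|\leq \min\{2,2\pi|\kappa||h|\}\leq C|\kappa|^\alpha|h|^\alpha$. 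This gives
\[
[D^\beta f]_{C^{0,\alpha}}\leq C|h|^\alpha\sum_\kappa |\kappa|^{k+\alpha}|\widehat f(\kappa)|,
\]
and the sum is finite by the same Cauchy--Schwarz bound as long as $s-k-\alpha>n/2$. That is exactly the stated condition on $\alpha$. The particular case follows by taking $k=0$.

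\emph{Part (b).} I will use the classical shift trick. Fix $\kappa\neq0$ and set $h=\kappa/(2|\kappa|^2)$, so that $e^{2\pi i\kappa\cdot h}=-1$. Then
\[
\widehat f(\kappa)=-\int f(x+h)e^{-2\pi i\kappa\cdot x}\,dx,
\qquad\text{so}\qquad
2\widehat f(\kappa)=\int \big(f(x)-f(x+h)\big)e^{-2\pi i\kappa\cdot x}\,dx.
\]
Therefore $|\widehat f(\kappa)|\leq \tfrac12[f]_{C^{0,\alpha}}|h|^\alpha=2^{-1-\alpha}[f]_{C^{0,\alpha}}|\kappa|^{-\alpha}$, which is the first claim.

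For the embedding, I apply this to $D^\beta f$ with $|\beta|=k$. Since $\widehat{D^\beta f}(\kappa)=(2\pi i\kappa)^\beta\widehat f(\kappa)$ and $\max_{|\beta|=k}|\kappa^\beta|\geq c_n|\kappa|^k$, I choose for each $\kappa$ the $\beta$ that maximizes $|\kappa^\beta|$. This gives $|\widehat f(\kappa)|\leq C\|f\|_{C^{k,\alpha}}|\kappa|^{-k-\alpha}$. Then
\[
\sum_\kappa (1+|\kappa|^2)^{s/2}|\widehat f(\kappa)|\lesssim \sum_{\kappa\neq0}|\kappa|^{s-k-\alpha},
\]
which converges precisely when $k+\alpha-s>n$.

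\emph{Part (c).} Write $f=G_{s'}*g$ with $s'=s+n/p+\varepsilon$ and $g\in L^p$. Then $\widehat f(\kappa)=(1+4\pi^2|\kappa|^2)^{-s'/2}\widehat g(\kappa)$. I apply H\"older's inequality with exponents $p'$ and $p$, followed by Hausdorff--Young, which requires $p\leq 2$:
\[
\sum_\kappa(1+|\kappa|^2)^{s/2}|\widehat f(\kappa)|\leq C\|\widehat g\|_{\ell^{p'}}\Big(\sum_\kappa(1+|\kappa|^2)^{-(n/p+\varepsilon)p/2}\Big)^{1/p}.
\]
The last sum is $\sum_\kappa \langle\kappa\rangle^{-n-\varepsilon p}$, which is finite. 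Hausdorff--Young gives $\|\widehat g\|_{\ell^{p'}}\leq\|g\|_{L^p}$. The endpoint $p=1$ works in the same way, using $\|\widehat g\|_{\ell^\infty}\leq\|g\|_{L^1}$. Taking $p=2$ and $L^{s,2}=H^s$ gives the particular case.

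All three parts are routine. The only place that needs care is the multi-index selection in (b), where one has to recover the full decay $|\kappa|^{-k}$ from a single derivative $D^\beta f$. The inequality $\max_{|\beta|=k}|\kappa^\beta|\geq (|\kappa|/\sqrt n)^k$ handles this: some coordinate satisfies $|\kappa_i|\geq|\kappa|/\sqrt n$, and taking $\beta=ke_i$ gives the bound.
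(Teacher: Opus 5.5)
Your proof is correct in all three parts. The paper does not prove this lemma (it says ``We omit the proof of Lemma~\ref{lem:besovWeinerAs}''), so there is no argument to compare against line by line. What you give is the standard Fourier-side proof, and it is consistent with how the paper uses these facts elsewhere.

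\begin{itemize}
\item Your part (c) uses exactly the H\"older--Hausdorff--Young pairing the authors use in the proof of Proposition~\ref{prop:BirkhoffRateBesselWiener}(ii). There, $\|(1+|\xi|^2)^{s/2}\widehat f\|_{\ell^{p'}}=\|\widehat g\|_{\ell^{p'}}\le\|g\|_{L^p}$ is paired against an $\ell^p$-summable weight.
\item Your half-period shift $h=\kappa/(2|\kappa|^2)$ in (b) reproduces the stated constant $2^{-(1+\alpha)}$ exactly, which is presumably what the authors had in mind.
\item Choosing, for each $\kappa$, a coordinate with $|\kappa_i|\ge|\kappa|/\sqrt n$ and taking $\beta=ke_i$ correctly recovers the full decay $|\kappa|^{-k-\alpha}$ in the embedding part of (b).
\end{itemize}

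Three small remarks.

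\begin{itemize}
\item In (a), the displayed bound should be stated for the difference: $|D^\beta f(x+h)-D^\beta f(x)|\le C|h|^\alpha\sum_\kappa|\kappa|^{k+\alpha}|\widehat f(\kappa)|$. The factor $|h|^\alpha$ does not belong in a bound on the seminorm itself.
\item You are implicitly using the correct norm $\bigl(\sum_\kappa(1+|\kappa|^2)^{s}|\widehat f(\kappa)|^2\bigr)^{1/2}$. The paper's written definition of the $H^s$ norm has exponent $s/2$ there, which is a typo.
\item The condition $k+\alpha>s+n$ in (b) is far from sharp. Applying Parseval to $\|D^\beta f(\cdot+h)-D^\beta f\|_{L^2}\le[D^\beta f]_{C^{0,\alpha}}|h|^\alpha$ and averaging over $|h|\sim 2^{-j}$ gives $\sum_{|\kappa|\sim 2^j}|\widehat f(\kappa)|^2\lesssim 2^{-2j(k+\alpha)}$. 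Cauchy--Schwarz on each dyadic shell then yields $C^{k,\alpha}(\T^n)\subset A^s(\T^n)$ already for $k+\alpha>s+n/2$. Since the statement only asks for $k+\alpha>s+n$, your pointwise coefficient decay is exactly enough.
\end{itemize}
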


We omit the proof of Lemma \ref{lem:besovWeinerAs}. 
The main result of this section is the following.

\begin{prop}[Wiener Algebra and Bessel Potentials]\label{prop:BirkhoffRateBesselWiener}
     Assume that $\omega\in \mathcal{D}(\sigma, C_\omega,n)$ and $x\in \T^n$.
\begin{itemize}
    \item[(i)] 
    If $f\in A^s(\T^n)$ for $s>0$, then
    \begin{equation}\label{eq:RateW}
         \left|
        \frac{1}{T}\int_0^T f(x + \omega t)\;dt - \int_{\T^n} f(y)\;dy  
        \right| \leq 
        \left(1+\frac{1}{\pi C_\omega}\right)\Vert f\Vert_{A^s(\T^n)} \left(\frac{1}{T} \right)^{\min \{{\frac{s}{\sigma}},1\}}. 
    \end{equation}
    
    \item[(ii)] Let $1<p\leq 2$ and $s>\frac{n}{p}$. If $f\in L^{s,p}(\T^n)$ then there exists $C=C(n,p,s,C_\omega)$ such that 
    \begin{align}\label{eq:BesselRateA}
        \mbox{\qquad}
        \left|\frac{1}{T}\int_{0}^T f(x+\omega t)\;dt  - \int_{\T^n} f(y)\;dy \right| 
        \leq 
        C\Vert f\Vert_{L^{s,p}(\T^n)}
        \begin{cases}
        \begin{aligned}
            & T^{-1} 
                && s > \sigma + n/p \\[1mm]
            &T^{-1}(\log T)^{1/p}
                && s = \sigma + n/p \\[1mm]
            &T^{-\frac{s-n/p}{\sigma}}     && s<\sigma + n/p. 
        \end{aligned}
        \end{cases}
    \end{align}
\end{itemize}
\end{prop}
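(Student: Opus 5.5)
For part (i), I would work directly with the Fourier series. Assume $x=0$ and $\widehat f(0)=0$; since $f\in A^s$ with $s>0$, the series converges absolutely and uniformly. I then split the frequencies at a cutoff $|\kappa|\le N$ versus $|\kappa|>N$.

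For the low modes, I use $\bigl|\frac1T\int_0^T e^{2\pi i\kappa\cdot\omega t}dt\bigr|\le \frac{1}{\pi T|\kappa\cdot\omega|}\le \frac{|\kappa|^\sigma}{\pi C_\omega T}$. This gives the bound
\[
\frac{1}{\pi C_\omega T}\sum_{|\kappa|\le N}|\kappa|^{\sigma}|\widehat f(\kappa)|\le \frac{N^{(\sigma-s)_+}}{\pi C_\omega T}\Vert f\Vert_{A^s}.
\]
For the high modes, I bound each average trivially by $1$, so their total is at most $\sum_{|\kappa|>N}|\widehat f(\kappa)|\le N^{-s}\Vert f\Vert_{A^s}$. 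If $s\ge\sigma$, I take $N=\infty$ and obtain $T^{-1}$ directly. If $s<\sigma$, I choose $N=T^{1/\sigma}$, so both terms become $T^{-s/\sigma}$, with constant $1+1/(\pi C_\omega)$.

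For part (ii), I would use a Littlewood--Paley argument as in Proposition \ref{prop:thm1part1}. For $f=G_s*g$, I have $\widehat{\Delta_j f}(\xi)=\varphi_j(\xi)(1+4\pi^2|\xi|^2)^{-s/2}\widehat g(\xi)$. Using Hölder's inequality on each shell $\mathcal A_j$ with Lemma \ref{lem:keyThm1} (exponent $p$ on the divisors, $p'$ on the coefficients), together with Hausdorff--Young, I get
\[
\Bigl|\int_0^T\Delta_jf(\omega t)\,dt\Bigr|\lesssim C_\omega^{-1}2^{j\sigma}2^{-js}\Vert\widehat g\mathbf 1_{\mathcal A_j}\Vert_{\ell^{p'}}.
\]
This is the step I expect to be the main obstacle, for two reasons. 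First, $\ell^{p'}$-localisation of $\widehat g$ to shells is not controlled by $\Vert g\Vert_{L^p}$ in a summable way. Second, the stated exponents involve $n/p$ and a $(\log T)^{1/p}$ loss, which suggests a cruder route.

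I would therefore instead first use the embedding $L^{s,p}\hookrightarrow B^{s}_{p,\infty}$ (or $B^s_{p,p'}$ via Littlewood--Paley theory for $p\le 2$), and then Bernstein (Lemma \ref{lem:BernsteinPaley}) to get $L^{s,p}\hookrightarrow B^{s-n/p}_{\infty,r}$ for a suitable $r$, with $r=p$ plausible from $L^{s,p}\subset B^{s}_{p,\max(p,2)}$. Proposition \ref{prop:thm1part2} applied with regularity $s-n/p$ and $q$ such that $1-1/q=1/p$ then yields the three regimes $s-n/p\gtrless\sigma$ with rates $T^{-1}$, $T^{-1}(\log T)^{1/p}$ and $T^{-(s-n/p)/\sigma}$. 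If the required $q$ does not come out exactly, I would fall back on the direct dyadic splitting at $N=\lfloor\log_2T/\sigma\rfloor$: low shells are controlled via Lemma \ref{lem:keyThm1}, high shells via Bernstein's $L^\infty$ bound, and the resulting sums are compared using Hölder in $j$. This reproduces the logarithm with exponent determined by the summability index of the Besov embedding.
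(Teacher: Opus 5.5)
Your part (i) is the paper's proof: cut off at $|\kappa|\le N$, bound the low modes by $|\kappa|^\sigma/(\pi C_\omega T)$, bound the high modes trivially, and take $N=T^{1/\sigma}$.

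For part (ii), the route you commit to differs from the paper's. The paper uses neither Littlewood--Paley blocks nor Lemma \ref{lem:keyThm1} here. It applies Hausdorff--Young once, $\Vert (1+|\xi|^2)^{s/2}\widehat f\Vert_{\ell^{p'}}=\Vert\widehat g\Vert_{\ell^{p'}}\le\Vert g\Vert_{L^p}$. It bounds every divisor individually by $|\xi\cdot\omega|^{-1}\le |\xi|^\sigma/C_\omega$, and applies H\"older on the ball $0<|\xi|\le N$ against the weight $(1+|\xi|^2)^{(\sigma-s)/2}$. Lattice-point counting then gives $O(1)$, $(\log N)^{1/p}$ or $N^{\sigma+n/p-s}$; the tail is $O(N^{-(s-n/p)})$, and one takes $N\approx T^{1/\sigma}$. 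Both the shift by $n/p$ and the power $1/p$ on the logarithm come from this counting.

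You lose the same $n/p$ through Bernstein and then invoke Proposition \ref{prop:thm1part2} with regularity $s-n/p$. That is valid, but it rests on $L^{s,p}=F^{s}_{p,2}\hookrightarrow B^s_{p,2}$ for $1<p\le 2$. This is a Littlewood--Paley square-function theorem that the paper does not provide, and your critical case genuinely needs it: the elementary embedding $L^{s,p}\hookrightarrow B^s_{p,\infty}$ only gives $T^{-1}\log T$ at $s=\sigma+n/p$. You also need to fix your index. For $p\le 2$ one has $\max(p,2)=2$, and $L^{s,p}\hookrightarrow B^s_{p,p}$ fails in general when $p<2$ (the inclusion goes the other way). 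Use $q=p'$, or $q=2$, which gives $(\log T)^{1/2}\le(\log T)^{1/p}$. What your route buys is modularity, and with $q=2$ a slightly better logarithm.

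The route you abandoned is actually the best one, and your objection to it does not hold. Each $\xi$ lies in at most two annuli $\mathcal A_j$, so $\sum_j\Vert \widehat g\,\mathbf{1}_{\mathcal A_j}\Vert_{\ell^{p'}}^{p'}\le 2\Vert g\Vert_{L^p}^{p'}$. H\"older in $j$ then gives
\[
\sum_{j\ge 0}2^{j(\sigma-s)}\Vert\widehat g\,\mathbf{1}_{\mathcal A_j}\Vert_{\ell^{p'}}\le 2^{1/p'}\Bigl(\sum_{j\ge 0}2^{j(\sigma-s)p}\Bigr)^{1/p}\Vert g\Vert_{L^p},
\]
which is finite as soon as $s>\sigma$. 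Together with $s>n/p$ (which gives absolute convergence of the Fourier series), this yields the rate $T^{-1}$ for all $s>\max\{\sigma,n/p\}$. That is the threshold of Theorem \ref{thm:BesovHolderCounting}(i).

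For $s\le\sigma$, cut at $2^N\approx T^{1/\sigma}$ and bound the tail by $\sum_{j>N}(\#\mathcal A_j)^{1/p}2^{-js}\Vert g\Vert_{L^p}\lesssim 2^{-N(s-n/p)}\Vert g\Vert_{L^p}$; this recovers $T^{-(s-n/p)/\sigma}$. So this route proves all three cases of the statement, is strictly stronger when $\sigma<s\le\sigma+n/p$, and needs no square-function theory. The gain comes from Lemma \ref{lem:keyThm1}, which exploits the separation of the values $\xi\cdot\omega$ on each shell; the paper's pointwise divisor bound throws that separation away.
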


The condition \(s>\frac{n}{p}\) ensures that \(L^{s,p}(\T^n)\subset C(\T^n)\) and that the Fourier series of \(f\) converges uniformly and absolutely due to Lemma~\ref{lem:besovWeinerAs}.

\begin{proof}[Proof of Proposition \ref{prop:BirkhoffRateBesselWiener}] Without loss of generality, we assume $x=0$. 
With $S_Nf(x) = \sum_{|\xi|\leq N} \widehat{f}(\xi) e^{2\pi i \xi\cdot x}$ we have 
\begin{align}\label{eq:maines}
     &\left|
        \frac{1}{T}\int_0^T f(\omega t)\;dt - \int_{\T^n} f(x)\;dx
    \right|\nonumber \\
    &\qquad \leq 
    \left|\frac{1}{T}\int_{0}^T S_Nf(\omega t)\;dt  - \int_{\T^n} f(x)\;dx \right|
    +
    \frac{1}{T}\left| \int_0^T \sum_{|\xi|> N} \widehat{f}(\xi) e^{2\pi i\xi \omega x} \;dx\right| . 
\end{align}
For the lower frequencies part, we estimate
\begin{align}\label{eq:e1}
    \left|\frac{1}{T}\int_{0}^T S_Nf(\omega t)\;dt  - \int_{\T^n} f(x)\;dx \right|
    &= 
    \left| \sum_{0<|\xi|\leq N} \widehat{f}(\xi) \frac{e^{2\pi i \xi\cdot \omega T} - 1}{2\pi i \xi \cdot \omega T} \right| \leq 
    \frac{1}{\pi C_\omega T}
    \sum_{0<|\xi|\leq N}  |\xi|^\sigma |\widehat{f}(\xi)|. 
\end{align}
\medskip

{\it (i).} 
    Since $f\in A^s(\T^n)$, we have $f\in C^0(\T^n)$ and $\widehat{f}\in \ell^1(\Z^n)$, we have $S_Nf(x) \to f$ uniformly in $C^0(\T^n)$.
    For $\xi\in \Z^n$ with $|\xi|\leq N$, we have
    \begin{equation*}
        \frac{1}{T}\sum_{0<|\xi|\leq N} |\xi|^\sigma |\widehat{f}(\xi)| \leq \frac{1}{T} \cdot N^{\sigma-s} \cdot\sum_{0<|\xi| \leq N} |\xi|^s |\widehat{f}(\xi)| 
        \leq \frac{N^{\sigma-s}}{T} \Vert f\Vert_{A^s(\T^n)}. 
    \end{equation*}
    For $\xi\in \Z^n$ with $|\xi| > N$, we have
    \begin{align*}
        \frac{1}{T}
        \left| 
            \int_0^T \sum_{|\xi|> N} \widehat{f}(\xi) e^{2\pi i\xi \omega t} \;dt\right| 
        \leq \sum_{|\xi|> N}  |\widehat{f}(\xi)| \leq N^{-s} \sum_{|\xi|>N} |\xi|^s|\widehat{f}(\xi)| \leq N^{-s} \Vert f\Vert_{A^s(\T^n)}. 
    \end{align*}
    From \eqref{eq:maines} and \eqref{eq:e1} we obtain
    \begin{align*}
        \left|
        \frac{1}{T}\int_0^T f(\omega t)\;dt - \int_{\T^n} f(x)\;dx
        \right|
        &\leq \frac{1}{\pi C_\omega} \frac{1}{T} N^{\sigma-s}\Vert f\Vert_{A^s(\T^n)} + N^{-s} \Vert f\Vert_{A^s(\T^n)}.
    \end{align*}
    If $s>\sigma$ then the we simply estimate $N^{\sigma-s} \leq 1$, then we obtain \eqref{eq:RateW} with the rate $T^{-1}$. Otherwise, if $0<s\leq \sigma$,
    choosing $N = T^{1/\sigma}$ we obtain \eqref{eq:RateW}. 

\bigskip

{\it (ii).} If $f\in L^{s,p}(\T^n)$then $f = G_s*g$ for $g\in L^p(\T^n)$. 
By the Hausdorff-Young inequality with $1< p\leq 2$ we have 
\begin{align*}
    \left\Vert (1+|\xi|^2)^\frac{s}{2}\widehat{f}(\xi)\right\Vert_{\ell^q(\Z^n)} 
    = 
    \Vert \widehat{g}\Vert_{\ell^q(\Z^n)} \leq \Vert g\Vert_{L^p(\T^n)} = \Vert f\Vert_{L^{s,p}(\T^n)}. 
\end{align*} 
By H\"older inequality for $p\in(1,2]$ and $q\in [2,\infty)$ with $\frac{1}{p}+\frac{1}{q} = 1$, we have
\begin{align}
    \sum_{0<|\xi|\leq N}  |\xi|^\sigma |\widehat{f}(\xi)|
    &\leq
    \sum_{0<|\xi|\leq N}  (1+|\xi|^2)^{\frac{\sigma-s}{2}}
    (1+|\xi|^2)^{\frac{s}{2}} |\widehat{f}(\xi)| \nonumber \\
    &\leq 
    \left(\sum_{0<|\xi|\leq N}  (1+|\xi|^2)^{\frac{\sigma-s}{2}\cdot p}\right)^{1/p}
    \left\Vert (1+|\xi|^2)^\frac{s}{2}\widehat{f}(\xi)\right\Vert_{\ell^q(\Z^n)} . \label{eq:e2}
\end{align}
\medskip

{\it Case 1.}
If $s > \sigma+\frac{n}{p}$ then $s= \sigma+\frac{n}{p}+\varepsilon$ for some $\varepsilon>0$. Then $-(\sigma-s)p =n+p\varepsilon > n$, and thus 
\begin{equation*}
    \sum_{0<|\xi|\leq N}  (1+|\xi|^2)^{\frac{\sigma-s}{2}\cdot p}
    \leq 
    \sum_{\xi \in \Z^n\backslash \{0\}} (1+|\xi|^2)^{-\frac{n/p+\varepsilon}{2}\cdot p} < \infty . 
\end{equation*}
Combining \eqref{eq:e1} and \eqref{eq:e2}, then passing \(N\to\infty\) by Lemma~\ref{lem:besovWeinerAs}, yields the desired result.\medskip

{\it Case 2.} If $s = \sigma+\frac{n}{p}$, then 
\begin{align*}
    \left(\sum_{0<|\xi|\leq N}  (1+|\xi|^2)^{\frac{\sigma-s}{2}\cdot p}\right)^{1/p}
    &\leq 
    \left(C_n\int_1^N r^{(\sigma-s)p} r^{n-1}\;dr\right)^{1/p}= C_{n,p}\log(N)^{1/p}, \\
    \left(\sum_{|\xi|> N}  (1+|\xi|^2)^{-\frac{sp}{2}}\right)^{1/p} 
    &\leq \left(C_n\int_N^\infty |r|^{-sp}r^{n-1}\;dr \right)^{1/p}
    = \frac{C_{n,p}}{(p\sigma)^{1/p}} N^{-\sigma}.
\end{align*}
Therefore
\begin{align*}
    \left|
        \frac{1}{T}\int_0^T f(\omega t)\;dt - \int_{\T^n} f(x)\;dx
    \right|
        &\leq 
        C_{n,p,\sigma}
        \Vert f\Vert_{L^{s,p}(\T^n)}
        \frac{(\log T)^{1/p}}{T}
\end{align*}
by choosing $N = \left\lfloor T^{-1/\sigma}\right\rfloor$. \medskip 

{\it Case 3.} If $\frac{n}{p} < s<\sigma+\frac{n}{p}$ then
\begin{align*}
    \left(\sum_{0<|\xi|\leq N}  (1+|\xi|^2)^{\frac{\sigma-s}{2}\cdot p}\right)^{1/p}
    \leq 
    \left(\sum_{0<|\xi|\leq N}  N^{(\sigma-s)p}\right)^{1/p} \leq C_{n,p}N^{\sigma+\frac{n}{p}-s},
\end{align*}
thanks to the fact that, the number of integer vectors $\xi$ with $|\xi|\leq N$ is bounded by $C_nN^n$.  For $\xi\in \Z^n$ with $|\xi| > N$, we have
    \begin{align*}
        \frac{1}{T}\left| \int_0^T \sum_{|\xi|> N} \widehat{f}(\xi) e^{2\pi i\xi \omega x} \;dx\right| 
        &\leq 
        \sum_{|\xi|>N} |\xi|^{-s}(1+|\xi|^2)^{\frac{s}{2}}|\widehat{f}(\xi)| \\
        & \leq 
        \left(\sum_{|\xi|>N} |\xi|^{-sp}\right)^{1/p}
        \Vert f\Vert_{L^{s,p}(\T^n)} 
        \leq 
        \frac{C_{n,p}}{sp-n}\cdot  N^{-(s-\frac{n}{p})}\Vert f\Vert_{L^{s,p}(\T^n)} 
    \end{align*}
since $s>\frac{n}{p}$. 
From \eqref{eq:maines}, \eqref{eq:e1}, \eqref{eq:e2} we have
\begin{align*}
    \left|
        \frac{1}{T}\int_0^T f(\omega t)\;dt - \int_{\T^n} f(x)\;dx
    \right|
        &\leq 
        \left(
            \frac{C_{n,p}}{\pi C_\omega} \frac{N^{\sigma+\frac{n}{p}-s}}{T} + \frac{p C_{n,p}}{sp-n} N^{-(s-\frac{n}{p})} 
        \right)\Vert f\Vert_{L^{s,p}(\T^n)}\\
        &\leq 
        \frac{C(n,p,s-\frac{n}{p})}{C_\omega} \Vert f\Vert_{L^{s,p}(\T^n)} \left(\frac{1}{T}\right)^{\frac{s-n/p}{\sigma}}.  
\end{align*}
We obtain the desired conclusion.
\end{proof}

\bibliography{refs.bib}{}
\bibliographystyle{acm}

\end{document}